\definecolor{myred}{RGB}{250,170,85}
\definecolor{mygreen}{RGB}{155,250,100}
\definecolor{myyellow}{RGB}{245,215,25}
\definecolor{mypurple}{RGB}{170,85,250}
\newcommand\RedeclareMathOperator{%
  \@ifstar{\def\rmo@s{m}\rmo@redeclare}{\def\rmo@s{o}\rmo@redeclare}%
}
\newcommand\rmo@redeclare[2]{%
  \begingroup \escapechar\m@ne\xdef\@gtempa{{\string#1}}\endgroup
  \expandafter\@ifundefined\@gtempa
     {\@latex@error{\noexpand#1undefined}\@ehc}%
     \relax
  \expandafter\rmo@declmathop\rmo@s{#1}{#2}}
\newcommand\rmo@declmathop[3]{%
  \DeclareRobustCommand{#2}{\qopname\newmcodes@#1{#3}}%
}
\DeclareMathOperator{\Hom}{\mathrm{Hom}}
\DeclareMathOperator{\CAlg}{\mathrm{CAlg}}
\DeclareMathOperator{\RR}{\mathbb{R}}
\DeclareMathOperator{\CC}{\mathbb{C}}
\DeclareMathOperator{\ZZ}{\mathbb{Z}}
\DeclareMathOperator{\QQ}{\mathbb{Q}}
\DeclareMathOperator{\Z2}{\mathbb{Z}/2}
\DeclareMathOperator{\CP}{\mathbb{CP}}
\DeclareMathOperator{\HH}{\mathcal{H}}
\DeclareMathOperator{\BB}{\mathcal{B}}
\DeclareMathOperator{\KK}{\mathcal{K}}
\DeclareMathOperator{\II}{\mathcal{I}}
\DeclareMathOperator{\KO}{\mathrm{KO}}
\DeclareMathOperator{\KU}{\mathrm{KU}}
\DeclareMathOperator{\KR}{\mathrm{KR}}
\DeclareMathOperator{\Cl}{\mathrm{Cl}}
\DeclareMathOperator{\CCl}{\CC\!\mathrm{l}}
\DeclareMathOperator{\id}{\mathrm{id}}
\DeclareMathOperator{\Spin}{\mathrm{Spin}}
\DeclareMathOperator{\Pin}{\mathrm{Pin}}
\DeclareMathOperator{\Aut}{\mathrm{Aut}}
\DeclareMathOperator{\BSpin}{\mathrm{BSpin}}
\DeclareMathOperator{\ESpin}{\mathrm{ESpin}}
\DeclareMathOperator{\MSpin}{\mathrm{MSpin}}
\DeclareMathOperator{\MSO}{\mathrm{MSO}}
\DeclareMathOperator{\MO}{\mathrm{MO}}
\DeclareMathOperator{\U}{\mathrm{U}}
\RedeclareMathOperator{\O}{\mathrm{O}}
\DeclareMathOperator{\PU}{\mathrm{PU}}
\DeclareMathOperator{\ev}{\mathrm{even}}
\DeclareMathOperator{\BU}{\mathrm{BU}}
\DeclareMathOperator{\MU}{\mathrm{MU}}
\DeclareMathOperator{\s}{\mathfrak{s}}
\DeclareMathOperator{\colim}{\mathrm{colim}}
\newcommand{\Sp}{\textnormal{Sp}}
\newcommand{\fc}{\textnormal{fc}}
\newcommand{\gr}{\textnormal{gr}}
\newcommand{\T}{\mathcal{T}}
\theoremstyle{definition}
\newtheorem{theorem}{Theorem}[section]
\newtheorem{lemma}[theorem]{Lemma}
\newtheorem{corollary}[theorem]{Corollary}
\newtheorem{proposition}[theorem]{Proposition}
\newtheorem{example}[theorem]{Example}
\numberwithin{subcase}{case}
\newtheorem{remark}[theorem]{Remark}
\newtheorem{definition}[theorem]{Definition}
\title{Real spin bordism and orientations of topological $\mathrm{K}$-theory}
\author{Zachary Halladay}
\address{Department of Mathematics, University of Illinois at Urbana-Champaign, Urbana, IL, USA}
\email{zah2@illinois.edu} 
\author{Yigal Kamel}
\address{Department of Mathematics, University of Illinois at Urbana-Champaign, Urbana, IL, USA}
\email{ykamel2@illinois.edu}
\begin{document}

\begin{abstract}
We construct a commutative orthogonal $C_2$-ring spectrum, $\mathrm{MSpin}^c_{\mathbb{R}}$, along with a $C_2$-$E_{\infty}$-orientation $\mathrm{MSpin}^c_{\mathbb{R}} \to \mathrm{KU}_{\mathbb{R}}$ of Atiyah's Real K-theory. Further, we define $E_{\infty}$-maps $\mathrm{MSpin} \to (\mathrm{MSpin}^c_{\mathbb{R}})^{C_2}$ and $\mathrm{MU}_{\mathbb{R}} \to \mathrm{MSpin}^c_{\mathbb{R}}$, which are used to recover the three well-known orientations of topological $\mathrm{K}$-theory, $\mathrm{MSpin}^c \to \mathrm{KU}$, $\mathrm{MSpin} \to \mathrm{KO}$, and $\mathrm{MU}_{\mathbb{R}} \to \mathrm{KU}_{\mathbb{R}}$, from the map $\mathrm{MSpin}^c_{\mathbb{R}} \to \mathrm{KU}_{\mathbb{R}}$. We also show that the integrality of the $\hat{A}$-genus on spin manifolds provides an obstruction for the fixed points $(\mathrm{MSpin}^c_{\mathbb{R}})^{C_2}$ to be equivalent to $\mathrm{MSpin}$, using the Mackey functor structure of $\underline{\pi}_*\mathrm{MSpin}^c_{\mathbb{R}}$. In particular, the usual map $\mathrm{MSpin} \to \mathrm{MSpin}^c$ does not arise as the inclusion of fixed points for any $C_2$-$E_{\infty}$-ring spectrum.
\end{abstract}

\maketitle

\tableofcontents

\section{Introduction}\label{Introduction}

The main result of this paper refines all of the standard orientations of topological $\mathrm{K}$-theory to a single \textit{Real spin orientation}, $\MSpin^c_{\RR} \to \KU_{\RR} = \KR$, of Atiyah's Real $\mathrm{K}$-theory.

\begin{theorem}[Propositions \ref{realspinbordism}, \ref{underlying.spinc}, \ref{spin.fixedpoints}, \ref{Real.orientation}, and \ref{Real.ABS}]\label{maintheorem}
There exists a map of $C_2$-$E_{\infty}$-ring spectra, $\MSpin^c_{\RR} \to \KU_{\RR}$, satisfying the following properties. 
\begin{enumerate}
    \item The underlying spectrum of $\MSpin^c_{\RR}$ is $\MSpin^c$.
    \item There exists an $E_{\infty}$-map $\MSpin \to (\MSpin^c_{\RR})^{C_2}$.
    \item There exists a $C_2$-$E_{\infty}$-map $\MU_{\RR} \to \MSpin^c_{\RR}$.  
\end{enumerate}
The three standard orientations, $\MSpin^c \to \KU$, $\MSpin \to \KO$, and $\MU_{\RR} \to \KU_{\RR}$, are  recovered as $E_{\infty}$-maps from the map $\MSpin^c_{\RR} \to \KU_{\RR}$. 
\end{theorem}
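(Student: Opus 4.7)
The plan is to construct $\MSpin^c_\RR$ as a Real Thom spectrum, following the standard pattern for equivariant Thom spectra. First I would build a $C_2$-equivariant group model of $\Spin^c(n) = \Spin(n) \times_{\mathbb{Z}/2} \U(1)$ with the $C_2$-action that is trivial on the $\Spin(n)$ factor and is complex conjugation on $\U(1)$; the class of a complex line bundle with its conjugate-linear involution makes the universal bundle Real. Feeding this into the orthogonal $C_2$-equivariant Thom spectrum machinery (May--Sigurdsson, Hill--Hopkins--Ravenel) then yields a commutative $C_2$-orthogonal ring spectrum $\MSpin^c_\RR$ with the desired $C_2$-$E_\infty$ structure.

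The underlying-spectrum claim should be immediate from this construction: forgetting the $C_2$-action recovers the ordinary universal $\Spin^c$-bundle, hence the ordinary $\MSpin^c$. For the fixed-point map, I would identify the $C_2$-fixed subgroup of $\Spin^c(n)$ under the above action; the inclusion $\Spin(n) \times \{1\} \hookrightarrow \Spin(n) \times_{\mathbb{Z}/2} \U(1)$ lands in the fixed subgroup and, after passing to Thom spectra and naturality of the equivariant Thom construction, induces an $E_\infty$-map $\MSpin \to (\MSpin^c_\RR)^{C_2}$. For the map from $\MU_\RR$, I would use the equivariant refinement of the classical factoring of a complex structure through a $\Spin^c$ structure: the homomorphism $\U(n) \to \Spin^c(2n)$ intertwines complex conjugation on $\U(n)$ with the chosen $C_2$-action on $\Spin^c(2n)$, and at the level of Real Thom spectra this yields the desired $C_2$-$E_\infty$-map $\MU_\RR \to \MSpin^c_\RR$.

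The main obstacle is constructing the Real orientation $\MSpin^c_\RR \to \KU_\RR$ as a map of $C_2$-$E_\infty$-ring spectra. The plan is to promote the Atiyah--Bott--Shapiro construction $\MSpin^c \to \KU$ to the Real setting using $C_2$-equivariant complex Clifford modules, where the $C_2$-action on $\CCl_n$ is the conjugate-linear involution; the resulting bundles of Real Clifford modules assemble into a Thom class valued in $\KU_\RR$. To obtain a $C_2$-$E_\infty$-ring map rather than just a spectrum map, I would adopt the categorical viewpoint of Ando--Blumberg--Gepner--Hopkins--Rezk and realize the orientation as an equivariant map of Picard $\infty$-groupoids from Real $\Spin^c$-bundles to invertible $\KU_\RR$-modules. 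Checking equivariant coherence of the Clifford symbol is where the real work will sit.

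Finally, the three standard orientations follow formally. Applying the underlying-spectrum functor to $\MSpin^c_\RR \to \KU_\RR$ gives $\MSpin^c \to \KU$, using property (1). Taking $C_2$-fixed points, so that $\KU_\RR^{C_2} = \KO$, and precomposing with the map from (2) yields a composite $\MSpin \to \KO$. Precomposing instead with the map from (3) yields $\MU_\RR \to \KU_\RR$. In each case one must separately identify the resulting map with the classical orientation; this reduces to comparing Thom classes on spectra whose underlying and fixed-point data have already been pinned down, which should be routine once the equivariant ABS map has been set up carefully.
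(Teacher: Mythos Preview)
Your high-level strategy parallels the paper's, but there is a genuine error in the construction that breaks the $\MU_\RR$ map. You describe the $C_2$-action on $\Spin^c(n)=\Spin(n)\times_{\ZZ/2}\U(1)$ as trivial on $\Spin(n)$ and conjugation on $\U(1)$; this is correct for the value of $\MSpin^c_\RR$ at the \emph{trivial} representation $\RR^{n,0}$, but it is not the action relevant to the map from $\MU_\RR$. The homomorphism $\U(n)\to\Spin^c(2n)$ does \emph{not} intertwine complex conjugation with your action: writing $A=P+iQ\in\U(n)$, the underlying real matrix of $\bar A$ is $J A_\RR J^{-1}$ with $J=\mathrm{diag}(I_n,-I_n)$, so conjugation on $\U(n)$ lands in a nontrivial automorphism of $\SO(2n)$ (hence of $\Spin(2n)$), not the identity. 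The paper fixes this by working with $\Spin^c(V)\subset\CCl(V)$ for every $V\in\II_{C_2}$, with the $C_2$-action induced from that on $V$; the map $\U(n)\to\Spin^c(n\rho)=\Spin^c(n,n)$ is then $C_2$-equivariant for the action coming from $\RR^{n,n}=\RR^n\oplus\RR^n\sigma$, which on $\SO(2n)$ is precisely conjugation by $J$. Your description only pins down the underlying spectrum with its $C_2$-action (equivalently, the values at trivial representations), not the genuine $C_2$-spectrum, and the action you would need at $n\rho$ is not the one you wrote down.

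For the orientation $\MSpin^c_\RR\to\KU_\RR$, your proposed route via equivariant ABGHR/Picard machinery is genuinely different from the paper's and substantially more speculative: that framework has not been developed in the genuine $C_2$-setting to the point where one can read off a $C_2$-$E_\infty$ map. The paper instead builds an explicit point-set model following Joachim: $\KU_\RR(V)=C^*_{\gr}(\s,\CCl(V)\otimes\KK_V)$ with Real structures on all the $C^*$-algebras, and the orientation $\alpha_V$ is given by a concrete formula (conjugating the unit $\eta^{\KU_\RR}_V$ by unitaries). This yields a strict map of commutative orthogonal $C_2$-ring spectra with no coherence to check, and the identification of $\KU_\RR$ with Atiyah's Real $\mathrm{K}$-theory is done by computing $(\KU_\RR)^{C_2}\simeq\KO$ via Kasparov's Real $\mathrm{KK}$-theory. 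Your outline gives no indication of how you would verify that whatever target you construct actually models $\KU_\RR$, nor how the equivariant ABS symbol assembles to an $E_\infty$ map; this is where the paper's concrete operator-algebraic model does real work that your sketch defers.
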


In analogy with Real bordism and Real K-theory, we use the name \textit{Real spin bordism} to refer to the $C_2$-spectrum $\MSpin^c_{\RR}$. The Real spin orientation $\MSpin^c_{\RR} \to \KU_{\RR}$ recovers the standard orientations of K-theory as follows. The Real orientation of $\KU_{\RR}$ is obtained by precomposing with $\MU_{\RR} \to \MSpin^c_{\RR}$, the spin${}^c$ orientation of $\KU$ is obtained by taking underlying spectra, and the spin orientation of KO is obtained as the composite $$\MSpin \to (\MSpin^c_{\RR})^{C_2} \to \KU_{\RR}^{C_2} \simeq \KO.$$ 

We construct $\MSpin^c_{\RR}$ as follows. Let $\RR^{p,q}$ be the $C_2$-representation $\RR^p \oplus \RR^q  \sigma$, where $\sigma$ is the sign representation. Then the complex Clifford algebra $\CCl(\RR^{p,q})$ inherits a corresponding conjugate-linear $C_2$-action by the universal property of Clifford algebras. This action restricts to $\Spin^c(p,q) \subset \CCl(\RR^{p,q})$, and makes the usual representation, $\Spin^c(p,q) \times \RR^{p,q} \to \RR^{p,q}$,  $C_2$-equivariant. Using an appropriate model of $\ESpin^c(p,q)$, this induces a $C_2$-action on the Thom space, $\MSpin^c(p,q)$, of the universal $\Spin^c(p,q)$-vector bundle with fiber $\RR^{p,q}$. We define $\MSpin^c_{\RR}$ as an orthogonal $C_2$-spectrum whose value on the $C_2$-representation $\RR^{p,q}$ is the $C_2$-space $\MSpin^c(p,q)$.

\begin{remark}
Complex conjugation on $\U(1)$ induces a $C_2$-action on $\MSpin^c(n)$, which describes the underlying spectrum of $\MSpin^c_{\RR}$ with its induced $C_2$-action. This spectrum with $C_2$-action can be promoted to a genuine $C_2$-spectrum, $(\MSpin^c_{\RR})^h$, called the \textit{cofree completion} of $\MSpin^c_{\RR}$. Using the completion map $\MSpin^c_{\RR} \to (\MSpin^c_{\RR})^h$ and the fact that $\KU_{\RR} \simeq \KU_{\RR}^h$, all of the results in this paper apply to $(\MSpin^c_{\RR})^h$ as well. The question of whether or not $\MSpin^c_{\RR}$ is equivalent to $(\MSpin^c_{\RR})^h$ amounts to knowing whether or not the fixed points $(\MSpin^c_{\RR})^{C_2}$ is equivalent to the homotopy fixed points $(\MSpin^c_{\RR})^{hC_2} = ((\MSpin^c_{\RR})^h)^{C_2}$, which we do not investigate in this paper. 
\end{remark}

Given the facts that $\Spin^c(n)^{C_2} = \Spin(n)$ and $\KU_{\RR}^{C_2} = \KU_{\RR}^{hC_2} = \KO$, one might expect that the $C_2$-(homotopy) fixed points of $\MSpin^c_{\RR}$ is equivalent to $\MSpin$. The following theorem says that no such equivalence can cover $\MSO$.

\begin{theorem}\label{intro.counterspin}
    There does not exist a genuine $C_2$-spectrum $E$ satisfying all of the following conditions. 
    \begin{enumerate}
        \item The homotopy groups of the underlying spectrum are $E^e_* \cong \MSpin^c_*$;
        \item The homotopy groups of the $C_2$-fixed points are $E^{C_2}_* \cong \MSpin_*$;
        \item The $C_2$-action on $E^e_*$ is via a ring homomorphism on $\MSpin^c_*$;
        \item The diagram 
        $$
        \begin{tikzcd}
            \MSpin_*  \arrow[rr, "\text{res}"] \arrow[dr, "u_*"'] & & \MSpin^c_* \arrow[dl, "u_*^c"] \\
            & \MSO_* &
        \end{tikzcd}
        $$
        commutes, where $u_*$ and $u_*^c$ are the forgetful maps that take the underlying oriented bordism class, and $\text{res} \colon \MSpin_* \cong E^{C_2}_* \to E^e_* \cong \MSpin^c_*$ is induced by inclusion of fixed points.
    \end{enumerate}
\end{theorem}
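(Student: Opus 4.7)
The plan is to leverage the Mackey-functor structure on $\underline{\pi}_* E$, together with the ring-homomorphism hypothesis on the $C_2$-action, to derive a contradiction from an explicit calculation in degrees $2$ and $4$. For any genuine $C_2$-spectrum $E$, the homotopy Mackey functor $\underline{\pi}_* E$ supplies a conjugation action $\tau$ on $E^e_*$, a restriction $\mathrm{res}\colon E^{C_2}_* \to E^e_*$, and a transfer $\mathrm{tr}\colon E^e_* \to E^{C_2}_*$, subject to the Frobenius identity $\mathrm{res}(\mathrm{tr}(x)) = x + \tau(x)$ for all $x \in E^e_*$. Under $(1)$ and $(2)$ these become maps between $\MSpin_*$ and $\MSpin^c_*$, and $(4)$ identifies $u^c_* \circ \mathrm{res}$ with $u_*\colon \MSpin_* \to \MSO_*$.

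First I would work in degree $2$, where $\MSpin_2 \cong \Z2$ and $\MSpin^c_2 \cong \ZZ$ is generated by $\CP^1$. Since $\mathrm{Hom}(\Z2, \ZZ) = 0$, the restriction in degree $2$ is forced to be zero, and applying Frobenius to $\CP^1$ yields $\CP^1 + \tau(\CP^1) = 0$, so $\tau(\CP^1) = -\CP^1$.

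The decisive step is a calculation in degree $4$. By the ring-homomorphism hypothesis $(3)$, $\tau(\CP^1 \cdot \CP^1) = \tau(\CP^1)^2 = \CP^1 \cdot \CP^1$, so Frobenius gives
$$\mathrm{res}\bigl(\mathrm{tr}(\CP^1 \cdot \CP^1)\bigr) = 2(\CP^1 \cdot \CP^1) \in \MSpin^c_4.$$
This class is nonzero since the Todd genus sends $\CP^1 \cdot \CP^1$ to the generator of $\pi_4 \KU$, which is not $2$-torsion. On the other hand, postcomposing with the forgetful map $u^c\colon \MSpin^c_4 \to \MSO_4 \cong \ZZ$ and using condition $(4)$ yields
$$u\bigl(\mathrm{tr}(\CP^1 \cdot \CP^1)\bigr) = u^c\bigl(2(\CP^1 \cdot \CP^1)\bigr) = 2\,\sigma(\CP^1 \cdot \CP^1) = 0,$$
because $S^2 \times S^2$ has intersection form of signature zero. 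Since $\BSpin \to \mathrm{BSO}$ is a rational equivalence, $u\colon \MSpin_4 \to \MSO_4$ is a nonzero, hence injective, map of copies of $\ZZ$; indeed $u([K3]) = -16[\CP^2]$. Therefore $\mathrm{tr}(\CP^1 \cdot \CP^1) = 0$, contradicting $\mathrm{res}(\mathrm{tr}(\CP^1 \cdot \CP^1)) = 2(\CP^1 \cdot \CP^1) \neq 0$.

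The main obstacle is conceptual rather than technical: pinpointing the low-degree pair of bordism classes whose Mackey-functor and ring-homomorphism constraints collide. The essential arithmetic inputs --- that $\MSpin_4 \cong \ZZ$ has a generator of nonzero signature, and that $\CP^1 \cdot \CP^1$ is a non-$2$-torsion spin${}^c$ class with vanishing signature --- are the concrete shadow of the integrality (and Rokhlin divisibility) of the $\hat A$-genus on spin $4$-manifolds, which is why the authors describe the obstruction in those terms.
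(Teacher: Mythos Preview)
Your argument is correct and genuinely different from the paper's. Both proofs exploit the same structural ingredients --- the double-coset identity $\mathrm{res}\circ\mathrm{tr}(x)=x+\tau(x)$, the multiplicativity of $\tau$ from condition~(3), and condition~(4) to detect classes via a genus on $\MSO_*$ --- but you and the authors instantiate them in different degrees with different invariants.

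The paper works one step higher: it takes $\gamma=[\CP^2]\in\MSpin^c_4$, does \emph{not} determine $\tau(\gamma)$ explicitly, and instead uses integrality of $\hat A$ on $\MSpin_4$ to write $\hat A(u^c_*(\tau(\gamma)))=\tfrac18+n$ for some $n\in\ZZ$. Squaring to degree~$8$ and using multiplicativity of both $\tau$ and $\hat A$ then forces $\hat A(u^c_*(\gamma^2+\tau(\gamma)^2))=(8n+1)/32+n^2\notin\ZZ$, contradicting integrality on $\MSpin_8$.

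Your route is more elementary: you first pin down $\tau(\CP^1)=-\CP^1$ for free from $\Hom(\Z2,\ZZ)=0$, then square into degree~$4$ and play the vanishing signature of $S^2\times S^2$ against its nonvanishing Todd genus. The advantage is that everything happens in degrees~$2$ and~$4$ with the best-known invariants; the paper's advantage is that it speaks directly in terms of $\hat A$-integrality, which is how the authors want to frame the obstruction conceptually. Either way, the mechanism is the same: a $\tau$-fixed class in $\MSpin^c_*$ that is nonzero yet dies under $u^c_*$, forced into the image of $\mathrm{res}$ by the Mackey structure and excluded by injectivity of $u_*$ (equivalently, integrality of a genus) on $\MSpin_*$.
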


Since the maps $\MSpin \to (\MSpin^c_{\RR})^{C_2}$ and $\MSpin \to (\MSpin^c_{\RR})^{hC_2}$ from Theorem \ref{maintheorem} factor the usual map $\MSpin \to \MSpin^c$, Theorem \ref{intro.counterspin} implies that neither map is an equivalence. We do not know whether or not either  $(\MSpin^c_{\RR})^{C_2}$ or $(\MSpin^c_{\RR})^{hC_2}$ is equivalent to MSpin, but since Theorem \ref{intro.counterspin} implies that the expected comparison map cannot be an equivalence, there is no reason to expect that such an equivalence exists.

\begin{remark}
    In light of the maps $\MU_{\RR} \to \MSpin^c_{\RR} \to \KU_{\RR}$, it is natural to compare $\MSpin^c_{\RR}$ to what is known about the various types of fixed points of $\MU_{\RR}$ and $\KU_{\RR}$. In particular, in the case of Real bordism, the $C_2$-(homotopy) fixed points $\MU_{\RR}^{C_2} = \MU_{\RR}^{hC_2}$ is not equivalent to MO, but the \textit{geometric fixed points}, $\Phi^{C_2}\MU_{\RR}$, is. Despite this analogy, we know that $\Phi^{C_2}\MSpin^c_{\RR} \not\simeq \MSpin$, since the functor $\Phi^{C_2}$ is lax monoidal, and there does not exist a ring map from MO to MSpin. We collect this information in a table in Figure \ref{figure.fixedpoints}.
\end{remark}

\begin{figure}[h]
\centering
\begin{tblr}{
colspec = {| c | c | c | c |}, 
cell{2}{2} = {myred!40},
cell{3}{2} = {myred!40},
cell{4}{2} = {mygreen!40},
cell{2}{3} = {mypurple!32},
cell{3}{3} = {mypurple!32},
cell{4}{3} = {myred!40},
cell{2}{4} = {mygreen!40},
cell{3}{4} = {mygreen!40},
cell{4}{4} = {myred!40} 
} \hline
 type of $C_2$-fixed points & $\MU_{\RR}$ & $\MSpin^c_{\RR}$ & $\KU_{\RR}$ \\ \hline 
 genuine $\;\;(\;\;)^{C_2}$ & $\cancel{\MO}$ & ? & KO \\  \hline
 homotopy $\;\;(\;\;)^{hC_2}$ & $\cancel{\MO}$  & ? & KO \\ \hline
 geometric $\;\;\Phi^{C_2}(\;\;)$ & $\MO$ & $\cancel{\MSpin}$  & $\cancel{\KO}$ \\ \hline
\end{tblr}
\caption{Comparing the various types of fixed points of Real bordism, Real spin bordism, and Real $\mathrm{K}$-theory}\label{figure.fixedpoints}
\end{figure}

\begin{remark}
Even though $\MU_{\RR}$ and $\MSpin^c_{\RR}$ are both Thom spectra built using the groups $\U(n)$ and $\Spin^c(n)$ with $\U(n)^{C_2} = \O(n)$ and $\Spin^c(n)^{C_2} = \Spin(n)$, the analogy between them breaks down for the following reason. The $C_2$-action on $\U(n)$ determines the action on the $(\RR^{n,n})$th space of the Real bordism spectrum, while the $C_2$-action on $\Spin^c(n)$ determines the action on the $(\RR^n)$th space of the Real spin bordism spectrum. The group that corresponds to the $(\RR^{n,n})$th space of $\MSpin^c_{\RR}$ is $\Spin^c(n,n)$, and $\Spin^c(n,n)^{C_2} \not\cong \Spin(n)$.     
\end{remark}

Lastly, we point out that the cohomological Thom classes that correspond to the Real spin orientation of $\KU_{\RR}$ constructed in this paper were originally constructed by Atiyah in the form of the following generalized Bott periodicity theorem.

\begin{theorem}[Atiyah \cite{Atiyah}, Theorem 6.3]\label{atiyah.Thom}
Let $G \to \Spin^c(p,q)$ be a $C_2$-equivariant homomorphism of groups with $C_2$-action, and suppose $p \equiv q \mod 8$. Then there is a class $u \in \KU_{\RR, G}(\RR^{p,q})$, such that multiplication by $u$ induces an isomorphism, 
$$
\KU_{\RR,G}(X) \cong \KU_{\RR, G}(\RR^{p,q} \times X).
$$
\end{theorem}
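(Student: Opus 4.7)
The plan is to derive this theorem from the Real spin orientation $\MSpin^c_{\RR} \to \KU_{\RR}$ constructed in Proposition \ref{Real.ABS}, together with equivariant Bott periodicity in $\KU_{\RR}$. First I would produce the universal Thom class: since the $\RR^{p,q}$th level of $\MSpin^c_{\RR}$ is the equivariant Thom space of the universal $\Spin^c(p,q)$-bundle with fiber $\RR^{p,q}$, the orientation yields a universal class $u_{p,q} \in \widetilde{\KU}_{\RR,\Spin^c(p,q)}(\RR^{p,q})$. Pulling back along the $C_2$-equivariant homomorphism $G \to \Spin^c(p,q)$ gives the required $u \in \KU_{\RR,G}(\RR^{p,q})$, and cup product with $u$ defines the multiplication map in the statement.

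Next I would reduce the claim that this map is an isomorphism to the universal statement that $u_{p,q}$ is a unit in $\widetilde{\KU}_{\RR}(\RR^{p,q})$. A Mayer--Vietoris and five-lemma argument, combined with a splitting-principle reduction that replaces $G$-representations by sums of $C_2$-equivariant line bundles, reduces to checking invertibility over a point with trivial $G$-action. The hypothesis $p \equiv q \pmod 8$ enters here: the representation sphere $S^{\RR^{p,q}}$ is $\KU_{\RR}$-invertible precisely when $p - q \equiv 0 \pmod 8$, with explicit Bott inverses built out of the classes in $\widetilde{\KU}_{\RR}(\RR^{1,1})$ and $\widetilde{\KU}_{\RR}(\RR^{8,0})$. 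Identifying $u_{p,q}$ with a product of such Bott generators would then establish its invertibility, and the Thom isomorphism follows by naturality of cup product along $G \to \Spin^c(p,q)$.

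The main obstacle I anticipate is precisely this identification. The class $u_{p,q}$ coming from the Real ABS orientation is described abstractly via the orthogonal $C_2$-spectrum structure of $\MSpin^c_{\RR}$, whereas the Bott generators admit very concrete Clifford-theoretic descriptions. I would handle this by verifying the identification directly in the two base cases $(p,q) = (1,1)$ and $(p,q) = (8,0)$, unwinding the ABS construction on the relevant low-dimensional spinor representations, and then bootstrapping to general $(p,q)$ with $p \equiv q \pmod 8$ using the multiplicativity of the Thom class under Whitney sums, which under passage to Thom spaces corresponds to smash product of representation spheres and matches the multiplicativity of the Bott element under the same smash products.
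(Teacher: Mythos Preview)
The paper does not prove Theorem \ref{atiyah.Thom}; it is quoted as a theorem of Atiyah \cite{Atiyah} and cited without proof. Its role in the paper is purely expository: after constructing the orientation $\alpha \colon \MSpin^c_{\RR} \to \KU_{\RR}$ in Proposition \ref{Real.ABS}, the authors remark that the resulting cohomological Thom classes coincide with the classes Atiyah had already produced. There is therefore no proof in the paper against which to compare your proposal.

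Your plan to \emph{derive} Atiyah's theorem from the paper's orientation is reasonable in spirit but, as written, has genuine gaps. First, $\alpha$ is a map of $C_2$-$E_\infty$-ring spectra, and the paper never constructs a refinement equivariant for an auxiliary compact group $G$; obtaining a class in $\KU_{\RR,G}(\RR^{p,q})$, rather than merely in $\KU_{\RR}(\RR^{p,q})$, requires either extending the whole construction of Section \ref{sec.realspin} to $(G \rtimes C_2)$-spectra or passing through Borel-equivariant K-theory together with a completion argument, neither of which you supply. Second, the ``splitting-principle reduction that replaces $G$-representations by sums of $C_2$-equivariant line bundles'' is not available for arbitrary $G$: the splitting principle only reduces to a maximal torus after pulling back along a flag variety, and one must then control injectivity of that pullback in Real $G$-equivariant K-theory, which is precisely the nontrivial content. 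Atiyah's own argument sidesteps both issues by constructing $u$ directly from an irreducible graded $\CCl_{p,q}$-module and establishing invertibility via the periodicity of Real Clifford algebras, with no splitting argument at all; your two base cases $(1,1)$ and $(8,0)$ are exactly the inputs to that Clifford-algebra periodicity, so the correct route is to work with Clifford modules throughout rather than to attempt a bundle-theoretic reduction.
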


As in the case of Theorem \ref{maintheorem}, this theorem contains all of the classical Thom isomorphisms as special cases. For example, letting $G = \Spin(8n)$ with trivial $C_2$-action, $(p,q) = (8n,0)$, and taking $X$ to be the total space of a principal $\Spin(8n)$-bundle, this recovers the Atiyah--Bott--Shapiro Thom class in $\KO$-theory for the spin vector bundle associated to $X$ \cite{ABS}. Similarly, the $\KU$-Thom classes for spin${}^c$ vector bundles are obtained from the identity map $\Spin^c(8n) = \Spin^c(8n,0)$, and the $\KU_{\RR}$-Thom classes for Real vector bundles are obtained from the inclusion $\U(n) \to \Spin^c(n,n)$.

\subsection{Overview of the paper} In Section \ref{sec.prelim}, we establish some terminology about Real structures and review facts that we need about orthogonal $C_2$-spectra. In Section \ref{sec.realspin}, we construct $\MSpin^c_{\RR}$ as a commutative monoid object in the category of orthogonal $C_2$-spectra. The spaces we use to model $\MSpin^c_{\RR}$ follow the definition of $\MSpin^c$ in \cite{Joachim}, except that we introduce (different) $C_2$-actions. In section \ref{sec.fixed}, we define the map $\MSpin \to (\MSpin^c_{\RR})^{C_2}$ and provide a general obstruction for such a map to be an equivalence by proving Theorem \ref{intro.counterspin}. This obstruction uses the integrality of the $\hat{A}$-genus on spin manifolds, together with formal properties that the homotopy Mackey functor $\underline{\pi}_*\MSpin^c_{\RR}$ must satisfy. In section \ref{sec.MR}, we construct the map $\MU_{\RR} \to \MSpin^c_{\RR}$ in the category of Real spectra (\cite{HHR}, \cite{hill_hopkins_ravenel_2021}). To do this, we use a model of $\MU_{\RR}$ that is mostly derived from our model of $\MSpin^c_{\RR}$ at the regular representations. The desired map then follows from the fact that $\U(n) \to \Spin^c(n,n)$ is $C_2$-equivariant. Most of the work in section \ref{sec.MR} lies in showing that our definition of $\MU_{\RR}$ is actually equivalent to Real bordism, which we do by describing the $C_2$-fixed points of the Real vector bundle associated to a Real free $\U(n)$-space (Lemma \ref{fixedpoints.MU}). In section \ref{sec.KR}, we construct the map $\MSpin^c_{\RR} \to \KU_{\RR}$ in the category of orthogonal $C_2$-spectra. Our model of $\KU_{\RR}$ follows the definition of $\KU$ in \cite{Joachim} in terms of spaces of graded $*$-homomorphisms of $C^*$-algebras, except that we endow all relevant $C^*$-algebras with Real structures. We show that our model of $\KU_{\RR}$ represents Atiyah's Real K-theory by showing that the associated cohomology theory restricted to spaces with trivial $C_2$-action is equivalent to KO.

\subsection{Acknowledgements} There are several people we would like to thank for helping this work come to fruition: Arun Debray, Cameron Krulewski, Natalia Pacheco-Tallaj, and Luuk Stehouwer,  for helping prove an earlier version of Corollary \ref{cor.counterspin}, which motivated our use of a genus in the proof of Theorem \ref{intro.counterspin}; Hassan Abdallah, Mike Hill, Michael Joachim, Kiran Luecke, and Charles Rezk, for helpful discussions relating to various aspects of this paper; and Dan Berwick-Evans, Connor Grady, Doron Grossman-Naples, Cameron Krulewski, Fredrick Mooers, and Vesna Stojanoska, for helpful feedback on an earlier draft. Finally, we would like to specially thank our advisors, Vesna Stojanoska and Dan Berwick-Evans, respectively, for their many hours of guidance both before and during the writing of this paper. The first author was partially funded by NSF grant DMS-2304797. The second author was partially funded by NSF grant DMS-2205835.

\section{Preliminaries}\label{sec.prelim}

First, we fix notation for the categories of spaces and $G$-spaces that we use in this paper. Let $\T$  be the category of pointed, compactly generated, weak Hausdorff spaces enriched over itself, and let $\T^{G}$ be the $\T$-enriched category of $G$-objects in $\T$ with equivariant maps. Lastly, let $\T_{G}$ be the $\T^{G}$-enriched category with the same objects as $\T^{G}$, but with $\T_{G}(X,Y) = \T(X,Y)$ which is made into a $G$-space by conjugation. Then we have $\T^{G}(X,Y) = \T_{G}(X,Y)^{G}$. 

\subsection{Real structures}\label{sec.Real}

This section introduces what the word Real refers to in various contexts. In short, \textit{Real} will always mean ``equipped with a $C_2$-action" when applied to objects, and ``respects $C_2$-actions" when applied to morphisms. We use the word Real to restrict to the context in which $\CC$ is always endowed with its fixed $C_2$-action given by complex conjugation. For example, this forces Real structures on complex vector spaces to be conjugate-linear. We often denote a Real structure (or $C_2$-action) by $x \mapsto \bar{x}$, when it is clear which action we are referring to.

\begin{definition}
 The category of \textit{Real groups} is the category of group objects in $\T^{C_2}$. If $G$ is a Real group, a \textit{Real $G$-space} is a $G$-module object in $\T^{C_2}$. We will also call the objects and morphisms of $\T^{C_2}$ \textit{Real spaces} and \textit{Real maps}, respectively. 
\end{definition}

\begin{remark}
Given a Real group $G$, a Real $G$-space is the same data as an object of $\T^{C_2 \ltimes G}$, where $C_2 \ltimes G$ is formed from the Real structure on $G$. 
\end{remark}

The following definition is more general than is needed here; we include it to indicate how Real linear algebra over the complex numbers fits into a framework similar to the one above. 

\begin{definition}
Let $\mathrm{Ab}^{C_2}$ be the symmetric monoidal category of $C_2$-objects in abelian groups under tensor product. A \textit{Real ring} is a monoid object in $\mathrm{Ab}^{C_2}$. If $R$ is a Real ring, a \textit{Real $R$-module} is an $R$-module object in $\mathrm{Ab}^{C_2}$. 
\end{definition}

\begin{example}\label{ex.C.real}
Complex conjugation gives $\CC$ the structure of a Real ring. With this Real structure, a Real $\CC$-module is a complex vector space together with a complex conjugate-linear involution. 
\end{example}

From this point on, the term \textit{Real vector space} will mean Real $\CC$-module, where $\CC$ is given the fixed real structure of Example \ref{ex.C.real}. Similarly, in any $\CC$-linear setting, such as $C^*$-algebras, $\CC$ is given this fixed Real structure. 

\begin{remark}\label{real.hom}
Let $X, Y \in \T_{C_2}$. Recall that $C_2$ acts on $\T_{C_2}(X,Y)$ by conjugation, $\overline{f}(x) = \overline{f(\overline{x})}$. This turns $\Aut_{\T_{C_2}}(X)$ into a Real group and makes $X$ into a Real $\Aut_{\T_{C_2}}(X)$-space. In this paper, we often use conjugation to define Real structures on spaces of specific kinds of functions between Real spaces. In each case, the resulting action is well-defined. The main observation is that in $\CC$-linear contexts, conjugation by $C_2$ respects $\CC$-linearity while complex conjugation of values does not.  
\end{remark}

\begin{proposition}\label{real.tensor}
Let $V,W$ be Real vector spaces. Then $V \otimes W$ is a Real vector space via $\overline{v \otimes w} = \overline{v}\otimes \overline{w}$. If $V,W$ are also Real $G$-spaces, on which $G$ acts by linear maps, then $V\otimes W$ is a Real $G$-space via $g\cdot(v\otimes w) = (gv)\otimes(gw)$.
\end{proposition}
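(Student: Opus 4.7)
The plan is to check, in order, that the formula $\overline{v \otimes w} = \overline{v} \otimes \overline{w}$ defines a well-defined map on $V \otimes_\CC W$, that this map is a $\CC$-antilinear involution (so gives a Real vector space structure in the sense of Example \ref{ex.C.real}), and then that the diagonal $G$-action is well-defined, $\CC$-linear, and $C_2$-equivariant.

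The main subtlety is well-definedness of the conjugation, since the naive map $V\times W\to V\otimes_\CC W$ sending $(v,w)\mapsto \overline{v}\otimes\overline{w}$ is not $\CC$-bilinear but conjugate-bilinear. I would handle this by regarding $c_V\colon V\to V$ and $c_W\colon W\to W$ as $\CC$-linear isomorphisms to the conjugate vector spaces $\overline{V}$ and $\overline{W}$, taking the usual $\CC$-linear tensor product $c_V\otimes c_W\colon V\otimes_\CC W\to\overline{V}\otimes_\CC\overline{W}$, and composing with the canonical identification $\overline{V}\otimes_\CC\overline{W}\cong\overline{V\otimes_\CC W}$. Viewed back in $V\otimes_\CC W$, this is a $\CC$-antilinear map sending $v\otimes w$ to $\overline{v}\otimes\overline{w}$. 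Equivalently, one checks directly that the map $V\times W\to V\otimes_\CC W$, $(v,w)\mapsto\overline{v}\otimes\overline{w}$, is $\RR$-bilinear and respects the $\CC$-balanced relation because $\overline{\lambda v}\otimes\overline{w}=\overline{\lambda}\,\overline{v}\otimes\overline{w}=\overline{v}\otimes\overline{\lambda}\,\overline{w}=\overline{v}\otimes\overline{\lambda w}$, so it factors through $V\otimes_\CC W$. Involutivity is immediate from the involutivity of the conjugations on $V$ and $W$ on elementary tensors, then extends by $\RR$-linearity.

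For the $G$-action, I would first observe that for fixed $g\in G$, the bilinear map $V\times W\to V\otimes_\CC W$, $(v,w)\mapsto (gv)\otimes(gw)$, is $\CC$-bilinear because $g$ acts $\CC$-linearly on each factor, and is $\CC$-balanced by the same token. Hence it induces a well-defined $\CC$-linear endomorphism of $V\otimes_\CC W$. Functoriality in $g$ gives a group homomorphism $G\to\Aut_\CC(V\otimes_\CC W)$. Continuity of the action follows from continuity of the actions on $V$ and $W$ and the continuity of the tensor product as a bifunctor.

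Finally, to see that the $G$-action is Real, i.e.\ that the map $G\times(V\otimes W)\to V\otimes W$ is $C_2$-equivariant, I would check equivariance on elementary tensors and extend by continuity and $\RR$-linearity. Since $V$ and $W$ are Real $G$-spaces we have $\overline{gv}=\overline{g}\,\overline{v}$ and $\overline{gw}=\overline{g}\,\overline{w}$, so
\[
\overline{g\cdot(v\otimes w)}=\overline{(gv)\otimes(gw)}=\overline{gv}\otimes\overline{gw}=(\overline{g}\,\overline{v})\otimes(\overline{g}\,\overline{w})=\overline{g}\cdot(\overline{v}\otimes\overline{w})=\overline{g}\cdot\overline{v\otimes w},
\]
which is exactly the Real $G$-space condition. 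The only step that requires even mild care is the well-definedness of conjugation on $V\otimes_\CC W$ discussed above; everything else is a direct unwinding of definitions.
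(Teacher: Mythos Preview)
Your proof is correct and follows the same approach as the paper; the final chain of equalities you give for the Real $G$-space condition is exactly the paper's one-line proof. You are simply more thorough, additionally spelling out why the conjugation is well-defined on $V\otimes_\CC W$ (a point the paper leaves implicit).
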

\begin{proof}
We have $\overline{g \cdot (v \otimes w)} = \overline{gv} \otimes \overline{gw} = \overline{g} \: \overline{v} \otimes \overline{g} \: \overline{w} = \overline{g} \cdot (\overline{v \otimes w})$.  
\end{proof}

\begin{proposition}\label{real.tensor.C}
Let $V$ be a Real vector space. Then $V \cong V^{C_2} \otimes \CC$ as Real vector spaces, where the Real structure on $V^{C_2} \otimes \CC$ is given by complex conjugation on the second factor.
\end{proposition}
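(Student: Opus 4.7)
The plan is to write down the obvious comparison map and verify it is a Real isomorphism. Define
$$
\phi \colon V^{C_2} \otimes_{\RR} \CC \longrightarrow V, \qquad \phi(v \otimes z) = zv,
$$
which is well-defined because $V$ is in particular an $\RR$-vector space. I would first check that $\phi$ is $\CC$-linear (immediate, since $\CC$ acts on the second tensor factor and $\phi$ transports this to scalar multiplication in $V$) and that it is Real-equivariant: the Real structure on the target sends $zv$ to $\overline{zv} = \bar z \bar v = \bar z v$ because $v \in V^{C_2}$, while the Real structure on the source sends $v \otimes z$ to $v \otimes \bar z$, and indeed $\phi(v \otimes \bar z) = \bar z v$.

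The bulk of the work, which is routine, is to show $\phi$ is a bijection. For surjectivity, I would exhibit the standard decomposition
$$
v = \tfrac{1}{2}(v + \bar v) + i \cdot \tfrac{1}{2i}(v - \bar v),
$$
and observe that both $v_+ := \tfrac{1}{2}(v + \bar v)$ and $v_- := \tfrac{1}{2i}(v - \bar v)$ lie in $V^{C_2}$, so that $v = \phi(v_+ \otimes 1 + v_- \otimes i)$. For injectivity, I would write any element of $V^{C_2} \otimes_{\RR} \CC$ in the form $v_1 \otimes 1 + v_2 \otimes i$ with $v_1, v_2 \in V^{C_2}$, note that its image is $v_1 + i v_2$, and then if $v_1 + i v_2 = 0$ apply the Real structure on $V$ to get $v_1 - i v_2 = 0$, so that $v_1 = v_2 = 0$.

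The only real point requiring care, rather than a genuine obstacle, is the distinction that the Real structure on $V$ is $\RR$-linear but conjugate-$\CC$-linear, while the complex structure on $V^{C_2} \otimes_{\RR} \CC$ is carried entirely by the second tensor factor; this is precisely why one tensors over $\RR$ rather than $\CC$, and why $V^{C_2}$ has no complex structure of its own. Once this is kept straight, every compatibility checked above follows formally from Proposition \ref{real.tensor} applied to $V^{C_2}$ (with trivial $C_2$-action) and $\CC$ (with the conjugation action).
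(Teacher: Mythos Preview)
Your proof is correct and essentially the same as the paper's: the paper defines the map $T \colon V \to V^{C_2} \otimes \CC$ by the formula $T(v) = \tfrac{1}{2}(v+\bar v)\otimes 1 - \tfrac{i}{2}(v-\bar v)\otimes i$ and then exhibits its inverse $T^{-1}(v_1\otimes 1 + v_2\otimes i) = v_1 + iv_2$, which is precisely your $\phi$. You simply start from the other direction, and your surjectivity argument reconstructs exactly the paper's formula for $T$.
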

\begin{proof}
Define $T:V \to V^{C_2} \otimes \CC$ by $T(v) = \frac{1}{2}(v + \overline{v}) \otimes 1 - (\frac{i}{2}(v - \overline{v})) \otimes i$. First, note that $T$ is well-defined, since $(v + \overline{v}), i(v - \overline{v}) \in V^{C_2}$. Next, $T$ is Real, since 
$$
\begin{aligned}
\overline{T(v)} &= \frac{1}{2}(v + \overline{v}) \otimes 1 - \bigg(\frac{i}{2}(v - \overline{v})\bigg) \otimes \overline{i} \\
&= \frac{1}{2}(v + \overline{v}) \otimes 1 + \bigg(\frac{i}{2}(v - \overline{v})\bigg) \otimes i \\
&= \frac{1}{2}(\overline{v} + v) \otimes 1 - (\frac{i}{2}(\overline{v} - v)) \otimes i = T(\overline{v}).
\end{aligned}
$$
Lastly, $T$ is an isomorphism, since $T^{-1}: V^{C_2} \otimes \CC \to V$ is given by $T^{-1}(v_1 \otimes 1 + v_2 \otimes i) = v_1 + iv_2$.
\end{proof}

The following proposition collects various routine facts that we use in this paper. 

\begin{proposition}\label{real.props}
Let $X$ be a Real $G$-space, let $Y$ be a Real space, let $Z$ be a Real right $G$-space, and let $\psi \colon H \to G$ be a Real homomorphism. 
\begin{enumerate}
    \item The action homomorphism $\varphi \colon G \to \Aut_{\T_{C_2}}(X)$ is Real.
    \item The Real space $\T_{C_2}(X,Y)$ is a Real $G$-space via $g\cdot f = f\circ g^{-1}$. 
    \item Restriction by $\psi$ makes $X$ into a Real $H$-space.
    \item The space $Z \times_G X$ is a Real space via $\overline{[z,x]} = [\overline{z},\overline{x}]$.
\end{enumerate}
\end{proposition}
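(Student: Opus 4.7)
The plan is to verify each of the four claims by a direct unwinding of definitions, using two ingredients: the conjugation formula $\bar f(x) = \overline{f(\bar x)}$ for the $C_2$-action on a mapping space recalled in Remark~\ref{real.hom}, and the defining property of a Real $G$-space, namely that its action map $G\times X\to X$ intertwines the ambient $C_2$-actions, i.e.\ $\overline{g\cdot x} = \bar g\cdot \bar x$. Since all pieces of structure in sight (the $G$-action on $X$, the $C_2$-action on $X$, the $C_2$-action on $G$, and the Real homomorphism $\psi$) have been set up so that they are already Real-compatible, the verifications are purely formal manipulations.

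For (1), I would compute $\overline{\varphi(g)}$ and $\varphi(\bar g)$ as self-maps of $X$ and observe that both equal the map $x\mapsto \bar g\cdot x$: the first by expanding $\overline{\varphi(g)}(x) = \overline{g\cdot \bar x} = \bar g \cdot x$ using the conjugation formula and Realness of the $G$-action, the second tautologically. For (2), I would first note that $g\cdot f := f\circ g^{-1}$ is a left $G$-action on any mapping space by the usual contravariance argument on $G^{\op}$; Realness then reduces to the identity $\overline{g\cdot f}(x) = \overline{f(g^{-1}\bar x)} = \bar f(\bar g^{-1} x) = (\bar g\cdot \bar f)(x)$, where the middle step uses $\overline{\bar g^{-1} x} = g^{-1}\bar x$, which itself combines Realness of inversion on $G$ with Realness of the $G$-action on $X$. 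Part (3) reduces to $\overline{\psi(h)\cdot x} = \psi(\bar h)\cdot \bar x$, which is one application of Realness of the $G$-action on $X$ followed by one application of Realness of $\psi$. For (4), the only substantive point is that the proposed formula $\overline{[z,x]} := [\bar z,\bar x]$ descends to the balanced product $Z\times_G X$; this follows from the chain $[\overline{zg},\bar x] = [\bar z\bar g,\bar x] = [\bar z,\bar g\bar x] = [\bar z,\overline{gx}]$, in which the outer equalities are Realness of the right and left $G$-actions and the middle equality is the defining relation of the balanced product.

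The only obstacle anywhere in this proposition is bookkeeping — distinguishing which $C_2$-action is being applied to which object at each stage — and even this is minor, since every object has been endowed with its $C_2$-action precisely so that these manipulations succeed. I expect the write-up to consist of four short computations, each of which fits on one or two lines.
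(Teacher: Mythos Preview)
Your proposal is correct and matches the paper's approach: the paper treats this proposition as routine and omits the proof entirely, with a commented-out sketch that simply verifies (3) via the one-line computation $\overline{h\cdot x} = \overline{\psi(h)\cdot x} = \overline{\psi(h)}\cdot\overline{x} = \psi(\overline{h})\cdot\overline{x} = \overline{h}\cdot\overline{x}$, identical to yours. Your write-up is more thorough than what the paper provides, but the method is the same direct unwinding of definitions.
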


\subsection{Equivariant orthogonal spectra}
In this section, we recall some facts about $C_2$-spectra and fix our choice of model. For more details, we refer the reader to Hill, Hopkins, and Ravenel \cite{hill_hopkins_ravenel_2021} and \cite{HHR}, or Mandell and May \cite{K-theory/0408}. Throughout, we work in the genuine equivariant context. Our primary choice of model for this is the category of orthogonal $C_2$-spectra. 

\vspace{3mm}

Given finite dimensional inner product spaces $V, W$, let $\O(V,W)$ be the space of linear isometric embeddings $V \to W$, and let $W-V$ be the vector bundle on $\O(V,W)$ whose fiber over $\iota : V \to W$ is the orthogonal complement $W - \iota V$. When $V$ and $W$ are orthogonal $C_2$-representations, $\O(V,W)$ and $W-V$ inherit compatible $C_2$-actions. Let $\II_{C_2}$ be the $\T^{C_2}$-enriched category whose objects are finite dimensional orthogonal $C_2$-representation and whose $C_2$-space of morphisms is the Thom space
\[
\II_{C_2}(V, W) = \text{Thom}(\O(V,W), W - V).
\]
Composition is then given by applying the Thom construction to the vector bundle map
\[\begin{tikzcd}[ampersand replacement=\&]
	(V_3-V_2) \times (V_2 - V_1) \& V_3-V_1 \\
	\O(V_2, V_3) \times \O(V_1, V_2) \& \O(V_1, V_3).
	\arrow[from=1-1, to=1-2]
	\arrow[from=1-1, to=2-1]
	\arrow[from=2-1, to=2-2]
	\arrow[from=1-2, to=2-2]
\end{tikzcd}\]

\begin{definition}
An \textit{orthogonal $C_2$-spectrum} is a $\T^{C_2}$-enriched functor
\[
X \colon \II_{C_2} \to \T_{C_2}.
\]
\end{definition}

Let $\Sp^{C_2}$ be the category of orthogonal $C_2$-spectra and equivariant enriched natural transformations between them.
There is an adjunction,
\[
\Sigma_{C_2}^{\infty} \colon \T^{C_2} \rightleftarrows \Sp^{C_2} \colon \Omega_{C_2}^{\infty},
\]
where $\Sigma_{C_2}^{\infty}X$ is the orthogonal $C_2$-spectrum defined by $V \mapsto \Sigma^VX$. We let $S_{C_2}$ denote the $C_2$-sphere spectrum $\Sigma_{C_2}^{\infty}S^0$. Let $\RR \in \II_{C_2}$ denote the trivial 1-dimensional representation,  $\sigma \in \II_{C_2}$ be the 1-dimensional sign representation, and $\rho = \RR \oplus \: \sigma$ be the regular $C_2$-representation.

\begin{definition}
Given an orthogonal $C_2$-spectrum $X$, a subgroup $H \subset C_2$, and $V \in \II_{C_2}$, define the \textit{$V$th-stable $H$ homotopy group} to be
\[
\pi_V^H X = \underset{n}{\text{colim}} \: \pi_V^H\Omega^{n\rho}X(n\rho).
\]
\end{definition}
This definition can be extended to virtual representations by defining
\[
\pi_{V-W}^H X = \underset{n}{\text{colim}} \: \pi_V^H\Omega^{n\rho}X(n\rho + W).
\]
Genuine equivariance endows these homotopy groups with the additional structure of a Mackey functor $\underline{\pi}_{V-W}X = \underline{\pi}_{V-W}^{(\;\;)}X$, as $H$ varies.

\begin{definition}\label{Mackey}
    A $C_2$-\textit{Mackey Functor}, $M$, consists of Abelian groups $M(C_2)$ and $M(e)$, together with group homomorphisms
    \vspace{-5mm}
    \begin{multicols}{2}
  \begin{equation}\label{Mackey.diagram}
  \begin{tikzcd}
        M(C_2) \arrow[bend right=35,swap]{d}{\text{res}} \\
        M(e) \arrow[bend right=35,swap]{u}{\text{tr}} \arrow[out=240,in=300,loop,swap, "\overline{(\;\;)}"]
    \end{tikzcd}
  \end{equation} 
  \break
  satisfying:
    \begin{enumerate}
        \item $\overline{\text{res}(x)} = \text{res}(x)$;
        \item $\text{tr}(\overline{y}) =\text{tr}(y)$;
        \item $\overline{\overline{y}} = y$;
        \item $\text{res}(\text{tr}(y)) = y + \overline{y}$.
    \end{enumerate}
\end{multicols}
An isomorphism of $C_2$-Mackey functors is a map of diagrams \eqref{Mackey.diagram} such that each component is an isomorphism. 
\end{definition}

\begin{proposition}
    The category $\Sp^{C_2}$ together with weak equivalences given by $\underline{\pi}_*$-isomorphisms is a homotopical category\footnote{Recall that a homotopical category is a category with a class of weak equivalences satisfying the ``two out of six'' property and containing all identity maps.} which presents the genuine $C_2$-stable homotopy category. This can be refined to a model category as in \cite[Proposition~B.63]{HHR} which we fix for concreteness, however we will not need any specifics involving (co)fibrations and therefore will not discuss the model structure further.
\end{proposition}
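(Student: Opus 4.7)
The plan is to reduce the proposition entirely to standard results about equivariant orthogonal spectra, since the authors clearly intend this statement as a reference tool rather than a new contribution; the proof should point at the relevant places in the literature and verify only the minimal combinatorial fact needed for the notion of weak equivalence to define a homotopical category at all.

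For the homotopical category claim, I would first note that the assignment $X \mapsto \underline{\pi}_* X$ is a functor from $\Sp^{C_2}$ to the (ordinary) category of $\ZZ$-graded $C_2$-Mackey functors, and that by definition a morphism in $\Sp^{C_2}$ is a $\underline{\pi}_*$-isomorphism precisely when its image under this functor is an isomorphism. Since identity maps go to identities, and the two-out-of-six property holds trivially for isomorphisms in any category, the class of $\underline{\pi}_*$-isomorphisms inherits two-out-of-six and identity-closure directly. This verification is a one-line check and is the only piece of the proposition that requires actual reasoning on our part.

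For the identification with the genuine $C_2$-stable homotopy category, I would defer to the stable model structure on orthogonal $G$-spectra constructed by Mandell and May \cite{K-theory/0319}, whose weak equivalences are precisely the $\underline{\pi}_*$-isomorphisms and whose underlying homotopy category is (equivalent to) the genuine $G$-equivariant stable homotopy category of Lewis--May--Steinberger; the case $G = C_2$ and the positive complete variant needed for commutative ring objects are treated by Hill--Hopkins--Ravenel \cite{HHR, hill_hopkins_ravenel_2021}. The main (very minor) obstacle is book-keeping: one must check that the definition of $\pi_V^H X$ used here, which takes a colimit over the cofinal system $\{n\rho\}_{n \geq 0}$ of regular representations, agrees with the standard definition, which takes a colimit over all finite-dimensional subrepresentations of a complete $C_2$-universe. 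This is a straightforward cofinality argument, since every finite-dimensional $C_2$-representation embeds into some $n\rho$, and the structure maps of the colimit system are compatible with these embeddings. Once this identification is made, the proposition becomes a direct citation.
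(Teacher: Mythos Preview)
Your proposal is correct and matches the paper's treatment: the paper gives no proof whatsoever for this proposition, stating it as a known fact after having already referred the reader to \cite{hill_hopkins_ravenel_2021}, \cite{HHR}, and \cite{K-theory/0319} at the start of the subsection. Your approach of citing those same sources for the model structure and verifying the elementary two-out-of-six property via functoriality of $\underline{\pi}_*$ is exactly the expected expansion, and the cofinality remark you include is a reasonable (if minor) bookkeeping check that the paper leaves implicit.
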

Let $\Sp^{BC_2}$ denote the category of orthogonal spectra with $C_2$-action (see \cite{Schwede2016LECTURESOE}). Restricting to the subcategory of trivial $C_2$-representations in $\II_{C_2}$ induces an equivalence of 1-categories \cite[Proposition~A.19]{HHR},
\[
(\;\;)^e \colon \Sp^{C_2} \to \Sp^{BC_2}.
\]

\begin{definition}\label{underlying}
Let $X$ be an orthogonal $C_2$-spectrum. The image, $X^e$, under the map $(\;\;)^e \colon \Sp^{C_2} \to \Sp^{BC_2}$ is called the \textit{underlying spectrum} of $X$.
\end{definition}
Taking homotopy groups of the underlying spectrum recovers the trivial subgroup homotopy groups of $X$, $\pi^e_*(X) \cong \pi_*(X^e)$, along with the induced $C_2$-action. More care is required when discussing the $C_2$-homotopy groups $\pi^{C_2}_*(X)$. The \textit{categorical fixed points} functor 
\[
F^{C_2} \colon \Sp^{C_2} \to \Sp,
\]
is given by 
\[
(F^{C_2}X)(n) = (X(\RR^n))^{C_2}.
\]
However, categorical fixed points does not preserve weak equivalences, as it can fail to capture the $C_2$-homotopy groups of $X$. This can be fixed by taking an appropriate fibrant replacement. 

\begin{definition}\cite[p.~71]{Schwede2016LECTURESOE}
Let $X$ be an orthogonal $C_2$-spectrum. The \textit{$C_2$-fixed points} of $X$ is the orthogonal spectrum $X^{C_2} \in \Sp$ given by $(X^{C_2})(n) = (\Omega^{n \sigma}X(n \rho))^{C_2}$.
\end{definition}

Then we have $\pi^{C_2}_*(X) \cong \pi_*(X^{C_2})$ \cite[Proposition~7.2]{Schwede2016LECTURESOE}. Similarly, we could have modelled $X^e$ by
\[
(X^e)(n) \cong \Omega^{n \sigma}X(n \rho).
\]
Levelwise inclusion of fixed points defines a map, 
\[
X^{C_2} \to X^e,
\]
which upon taking homotopy groups induces the restriction map,
\[
\text{res} \colon \pi^{C_2}_*(X) \to \pi^{e}_*(X),
\]
in the homotopy Mackey functor $\underline{\pi}_*X$. 

\begin{remark}
    The categorical fixed points fit into a Quillen adjunction
    \[
    T_{C_2} \colon \Sp \rightleftarrows \Sp^{C_2} \colon F^{C_2},
    \]
    where the left adjoint comes from giving a spectrum the trivial $C_2$-action and then passing through the equivalence of 1-categories $\Sp^{BC_2} \simeq \Sp^{C_2}$. Deriving this adjunction allows us to identify the cohomology theory represented by $X^{C_2}$ in terms of $X$. 
    \begin{proposition}\label{fixed.cohomology}
     Let $E^*$ be the cohomology theory represented by $X^{C_2}$, and let $Y \in \T$. Then viewing $Y \in \T^{C_2}$ by giving it the trivial $C_2$-action, from the above remark we have
    \[
     E^n(Y) \cong [\Sigma^{\infty}_{C_2} Y, \Sigma^n X]_{C_2}.
    \]
    \end{proposition}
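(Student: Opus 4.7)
The plan is to deduce this from the derived form of the Quillen adjunction $T_{C_2} \dashv F^{C_2}$ noted in the preceding remark. By definition of the cohomology theory represented by a spectrum,
\[
E^n(Y) = [\Sigma^{\infty}Y, \Sigma^n X^{C_2}]_{\Sp}.
\]
The model for $X^{C_2}$ given by $(X^{C_2})(n) = (\Omega^{n\sigma}X(n\rho))^{C_2}$ is designed precisely so that it computes the correct derived functor of $F^{C_2}$; indeed, this is what underlies the identification $\pi_*^{C_2}X \cong \pi_*(X^{C_2})$ cited from Schwede. In particular, $X^{C_2}$ is (up to weak equivalence) the value of the total right derived functor of $F^{C_2}$ applied to $X$. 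Consequently, up to taking appropriate cofibrant/fibrant replacements (which is harmless since we have passed to the homotopy categories), the adjunction yields a natural bijection
\[
[\Sigma^{\infty}Y,\; \Sigma^n X^{C_2}]_{\Sp} \;\cong\; [T_{C_2}(\Sigma^{\infty}Y),\; \Sigma^n X]_{C_2}.
\]

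The next step is to identify $T_{C_2}(\Sigma^{\infty}Y)$ with $\Sigma^{\infty}_{C_2}Y$, where $Y$ is regarded as a $C_2$-space with the trivial action. By construction, $T_{C_2}$ first equips a spectrum with the trivial $C_2$-action and then passes through the equivalence of 1-categories $\Sp^{BC_2} \simeq \Sp^{C_2}$ induced by restriction to trivial representations. Unwinding the equivalence, a trivially-acted spectrum with value $Z(n)$ at $\mathbb{R}^n$ extends to the orthogonal $C_2$-spectrum whose value at an arbitrary $V \in \II_{C_2}$ is built from $Z$ by smashing in the sign directions. Applied to $\Sigma^{\infty}Y$, this is exactly the genuine suspension spectrum $\Sigma^{\infty}_{C_2}Y$ of $Y$ with trivial action, whose value at $V$ is $\Sigma^{V}Y$ with $C_2$ acting only on the representation sphere coordinate. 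Substituting this into the display above yields the claimed formula.

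The only real obstacle is verifying that the model of $X^{C_2}$ used in the paper genuinely represents the derived right adjoint of $T_{C_2}$, so that the adjunction isomorphism descends to homotopy classes of maps. This is precisely the content of the results of Schwede cited just above, so once that identification is in place the remainder of the argument is formal manipulation of an adjunction and an identification of the left adjoint on suspension spectra.
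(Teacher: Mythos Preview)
Your proposal is correct and is exactly the argument the paper intends: the proposition is stated without a separate proof and is asserted to follow ``from the above remark'' about deriving the Quillen adjunction $T_{C_2} \dashv F^{C_2}$, which is precisely what you unpack. Your identification $T_{C_2}(\Sigma^{\infty}Y) \simeq \Sigma^{\infty}_{C_2}Y$ for trivially-acted $Y$ is the only nontrivial point, and it is indeed a formal consequence of how $T_{C_2}$ is defined via the equivalence $\Sp^{BC_2}\simeq\Sp^{C_2}$.
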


\end{remark}
Now, we consider the multiplicative aspects of our model. Commutative ring structures in the genuine equivariant setting is a rich subject and has been explored by Blumberg and Hill \cite{blumberghill2015operadic}, as well as by others. We will be interested in the complete $N_\infty$-algebras, also called $C_2$-$E_\infty$-ring spectra. The category of $C_2$-$E_\infty$-ring spectra can be modeled using orthogonal $C_2$-spectra as follows. Day convolution equips $\Sp^{C_2}$ with the structure of a closed symmetric monoidal category with monoidal unit given by $S_{C_2}$. As discussed in \cite[Section~2.1]{HHR} and \cite{blumberghill2015operadic}, the category $\CAlg(\Sp^{C_2})$ of commutative monoid objects in $\Sp^{C_2}$ models $C_2$-$E_\infty$-ring spectra. Further, by \cite[Proposition~22.1]{mandell_may_schwede_shipley_2001} it follows that $\CAlg(\Sp^{C_2})$ is equivalent to the category of lax symmetric monoidal $\T^{C_2}$-enriched functors $\II_{C_2} \to \T_{C_2}$. This is the setting in which we will address multiplicative questions. 

\begin{definition}
A \textit{commutative orthogonal $C_2$-ring spectrum} is a lax symmetric monoidal $\T^{C_2}$-enriched functor $\II_{C_2} \to \T_{C_2}$. A \textit{($C_2$-$E_{\infty}$-) map of commutative orthogonal $C_2$-ring spectra} is a symmetric monoidal $C_2$-equivariant enriched natural transformation.
\end{definition}

As is discussed in \cite[p.~77]{Schwede2016LECTURESOE}, the categorical and genuine fixed points are lax symmetric monoidal functors. Thus, for a commutative orthogonal $C_2$-ring spectrum $X$, the spectra $F^{C_2}X$ and $X^{C_2}$ inherit the structure of commutative monoids in $\Sp$. 
\begin{proposition}\label{categorical.genuine}
    There is a symmetric monoidal natural transformation,
    $$
    F^{C_2} \to (\;\;)^{C_2},
    $$
    from the categorical fixed points to the $C_2$-fixed points. 
\end{proposition}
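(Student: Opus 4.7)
The plan is to construct the natural transformation levelwise using the canonical adjoint structure maps of an orthogonal $C_2$-spectrum associated to the $C_2$-equivariant inclusion $\RR^n \hookrightarrow n\rho$, and then verify compatibility with the spectrum-level structure maps and the lax symmetric monoidal structures. For an orthogonal $C_2$-spectrum $X$, the $\T^{C_2}$-enriched functoriality on $\II_{C_2}$ encodes $C_2$-equivariant adjoint structure maps $\tilde{\sigma}_{V,W} \colon X(V) \to \Omega^{W-V} X(W)$ for every $C_2$-equivariant isometric embedding $V \hookrightarrow W$. Applying $\tilde{\sigma}$ to the inclusion $\RR^n \hookrightarrow n\rho = \RR^n \oplus n\sigma$ produces a map $X(\RR^n) \to \Omega^{n\sigma} X(n\rho)$ in $\T^{C_2}$, and taking $C_2$-fixed points gives
$$\eta_{X,n} \colon (F^{C_2}X)(n) = (X(\RR^n))^{C_2} \longrightarrow (\Omega^{n\sigma} X(n\rho))^{C_2} = (X^{C_2})(n).$$
Naturality in $X$ is immediate, since everything is built from enriched natural operations.

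Next, I would check that the $\eta_{X,n}$ assemble into a map of orthogonal spectra. The structure maps of both $F^{C_2}X$ and $X^{C_2}$ arise from applying $(\;)^{C_2}$ (together with $\Omega^{n\sigma}$ in the latter case) to the adjoint structure maps of $X$ along $\RR^n \hookrightarrow \RR^{n+1}$ and $n\rho \hookrightarrow (n+1)\rho$, respectively. Compatibility is then the commutativity of the square induced by the commutative diagram of $C_2$-equivariant embeddings
$$
\begin{tikzcd}
\RR^n \ar[r, hook] \ar[d, hook] & \RR^{n+1} \ar[d, hook] \\
n\rho \ar[r, hook] & (n+1)\rho,
\end{tikzcd}
$$
which holds by functoriality of $X$ together with naturality of the loops--suspension adjunction.

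Finally, for symmetric monoidality, I would use that the lax symmetric monoidal structures on both $F^{C_2}$ and $(\;\;)^{C_2}$ assemble from three ingredients: (i) the universal maps $X(V) \wedge Y(W) \to (X \wedge Y)(V \oplus W)$ encoding Day convolution, (ii) the lax monoidality of $\Omega^{(-)}$ via the canonical maps $\Omega^V A \wedge \Omega^W B \to \Omega^{V \oplus W}(A \wedge B)$, and (iii) the lax monoidality of $(\;)^{C_2}$. The transformation $\eta$ is itself built from the adjoint structure map in (i), so commutativity of the monoidal compatibility square for $X, Y$ reduces to the statement that both composites compute the structure map of $X \wedge Y$ along $\RR^n \oplus \RR^m \hookrightarrow (n+m)\rho$, which factors through $n\rho \oplus m\rho$ and agrees with the prescribed factorization by functoriality. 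Compatibility with the unit and symmetry isomorphisms follows from the symmetric and unital behavior of the family $\RR^n \hookrightarrow n\rho$ under direct sums. I expect the main obstacle to be the bookkeeping required to identify all three pieces of the lax monoidal structure on either side with the universal properties they refine, but no non-formal input is needed beyond the enriched naturality of all constructions involved.
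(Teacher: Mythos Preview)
Your proposal is correct and takes essentially the same approach as the paper: both construct the levelwise map by applying the enriched functoriality of $X$ to the inclusion $\RR^n \hookrightarrow n\rho$, passing to the adjoint map $X(\RR^n) \to \Omega^{n\sigma}X(n\rho)$, and taking $C_2$-fixed points. The paper in fact only sketches this levelwise construction and leaves the spectrum-level and symmetric monoidal verifications implicit, so your outline of those checks goes somewhat beyond what the paper records.
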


Such a natural transformation can be defined levelwise by letting
\[
\iota \colon \RR^n \to n\rho
\]
be the inclusion of the trivial submodule inside of $n \rho$,
taking the composition
\[
S^{n\sigma} \to \II_{C_2}(\RR^n, n\rho) \to \T_{C_2}(X(\RR^n), X(n\rho)),
\]
and then passing under adjunction to
\[
X(\RR^n) \to \Omega^{n\sigma}X(n\rho),
\]
before finally taking fixed points.

\section{The Real spin bordism spectrum}\label{sec.realspin}

In this section, we construct $\MSpin^c_{\RR}$ as a commutative monoid in orthogonal $C_2$-spectra. This mostly consists of equipping the constructions of Joachim in \cite{Joachim} with Real structures and verifying $C_2$-equivariance wherever it is relevant. The extra data comes from the Real structures on the Clifford algebras $\CCl_{p,q}$ mentioned in Section \ref{Introduction}, combined with the constructions from Section \ref{sec.Real}. 

\vspace{3mm}


Given an inner product space $V$, the Clifford algebra $\Cl(V)$ is characterized by the following universal property. For any linear map $f \colon V \to A$ to an $\RR$-algebra $A$ such that\footnote{see Remark \ref{clifford.signs}} $f(v)^2 = \langle v, v \rangle$, there is a unique extension to an algebra homomorphism $\Tilde{f}$ in
$$
\begin{tikzcd}
V \arrow[d, hook] \arrow[r, "f"] & A \\
\Cl(V). \arrow[ur, dashed, "\Tilde{f}"'] &
\end{tikzcd}
$$

Let $\Cl_n = \Cl(\RR^n)$, where $\RR^n$ is given the standard inner product. The Clifford algebra is a $\Z2$-graded algebra by specifying that $V \subset \Cl(V)$ lies in the odd subspace. Accordingly, the symbol $\otimes$ refers to the graded tensor product. In particular, the symmetry isomorphism  contains a sign,
$$
V \otimes W \xrightarrow{\sim} W \otimes V, \;\;\; v \otimes w \mapsto (-1)^{|v||w|}w \otimes v,
$$
for $v,w$ purely even or odd. With this symmetric monoidal structure, there is an isomorphism $\Cl(V \oplus W) \cong \Cl(V) \otimes \Cl(W)$. For $V \in \II_{C_2}$, the Clifford algebra $\Cl(V)$ inherits an induced $C_2$-action by the universal property above, where $C_2$ acts by algebra maps. We extend this action to a Real structure on the complex Clifford algebra $\CCl(V) := \Cl(V) \otimes \CC$ by complex conjugation on the $\CC$-factor and Proposition \ref{real.tensor}. Note that this $C_2$-action on $\CCl(V)$ preserves the $\Z2$-grading (i.e. it is even), since it comes from an action on $V$. Lastly, fix the $*$-structure on $\CCl(V)$ to be the one generated by $v^* = v$, for $v \in V$.  Let $\CCl_n = \CCl(\RR^n)$, where $\RR^n$ is the trivial $C_2$-representation, and $\CCl_{p,q} = \CCl(\RR^{p,q})$, where $\RR^{p,q} = \RR^p \oplus \RR^q \sigma$. 

\begin{remark}[Clifford sign conventions]\label{clifford.signs}
Sign conventions for Clifford algebras differ significantly throughout the literature. The conventions we use are chosen to be easily comparable to the ones in \cite{Joachim} and \cite{KasparovKK}, whose results we use in this paper. 
Forgetting the Real structure, the underlying complex $C^*$-algebra $\CCl(V)$ agrees with the $C^*$-algebra denoted $\CC\! l_{V}$ in \cite{Joachim}. The Real $C^*$-algebra $\CCl_{p,q}$ is isomorphic to the Real $C^*$-algebra denoted $C_{p,q}$ in \cite{KasparovKK}, as follows. Fix generators $v_1,...,v_p,w_1,...,w_q$ of $\CCl_{p,q}$ and $\varepsilon_1,...,\varepsilon_p,e_1,...,e_q$ of $C_{p,q}$, with 
$$
\begin{aligned}
&v_i^2 = 1, \; &w_j^2 = 1, \;\;\;\;&\varepsilon_i^2 = 1, \;&e_j^2 = -1, & \\
&\overline{v_i} = v_i, &\overline{w_j} = -w_j, \;\;\;& \overline{\varepsilon_i} = \varepsilon_i, &\overline{e_j} = e_j, & \\
&v_i^* = v_i, &w_j^* = w_j, \;\;\;&\varepsilon_i^* = \varepsilon_i, &e_j^* = -e_j. &
\end{aligned}
$$
Then the homomorphism $\CCl_{p,q} \to C_{p,q}$ defined by $v_i \mapsto \varepsilon_i$ and $w_j \mapsto ie_j$ is an isomorphism of Real graded $C^*$-algebras. 
\end{remark}

Let $L^2(V)$ be the $L^2$-completion of the pre-Hilbert space of continuous functions $V \to \CC$ with compact support. Remark \ref{real.hom} defines a Real structure on $L^2(V)$, and Proposition \ref{real.tensor} defines a Real structure on $\CCl(V)\otimes L^2(V)$. Here, we take $L^2(V)$ to be $\Z2$-graded by even and odd functions\footnote{Here, \textit{even} means $f(-x) = f(x)$, and \textit{odd} means $f(-x) = -f(x)$.}. Then the induced Real structure on $\CCl(V)\otimes L^2(V)$ is also even. Let $\BB(\CCl(V) \otimes L^2(V))$ be the Real space of bounded operators on $\CCl(V) \otimes L^2(V)$ equipped with the strong $*$-topology and the Real structure from Remark \ref{real.hom}. 

\begin{definition}
Let $\U_V \subset \BB(\CCl(V) \otimes L^2(V))$ denote the graded unitary group of $\CCl(V) \otimes L^2(V)$, that is, the group consisting of unitary operators on $\CCl(V) \otimes L^2(V)$ that are either grading preserving or grading reversing, and let $\Pin^c(V) \subset \CCl(V)$ be the group generated under multiplication by unit vectors $S(V) \subset V$ and unit complex numbers $\U(1) \subset \CC$.
\end{definition}

Then the $C_2$-actions on $\BB(\CCl(V) \otimes L^2(V))$ and $\CCl(V)$ restrict to $\U_V$ and $\Pin^c(V)$, respectively, making them into Real groups. Since both $C_2$-actions are grading preserving, they restrict to Real structures on the respective even subgroups, $\U_V^{\ev} \subset \U_V$ and $\Spin^c(V) \subset \Pin^c(V)$. By Remark \ref{real.hom}, the orthogonal group $\O(V)$ is a Real group. Twisted conjugation of $\Pin^c(V)$ on $\CCl(V)$ restricts to a homomorphism $\rho_V \colon \Pin^c(V) \to \O(V)$, defined by $\rho_V(g)(v) = (-1)^{|g|}gvg^{-1}$. 

\begin{proposition}\label{prop.rho}
The homomorphism $\rho_V$ is a Real homomorphism.
\end{proposition}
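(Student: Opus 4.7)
The goal is to show that $\rho_V$ intertwines the two $C_2$-actions: for $g \in \Pin^c(V)$, we need $\rho_V(\bar g) = \overline{\rho_V(g)}$ as elements of $\O(V)$. Since $\rho_V$ is a set-theoretic homomorphism essentially by construction, Reality is a purely pointwise statement, and I expect the proof to be a direct computation. There should be no serious obstacle; the only thing to be careful about is to use the right definition of each action.

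The plan is to assemble the following four facts about the actions:
\begin{enumerate}
\item The $C_2$-action on $\CCl(V)$ (and hence on $\Pin^c(V) \subset \CCl(V)$) is multiplicative, involutive, and grading-preserving; in particular, $\overline{gh} = \bar g \bar h$, $\overline{g^{-1}} = \bar g^{-1}$, and $|\bar g| = |g|$. This is exactly the content of the paragraph preceding the proposition, together with Proposition \ref{real.tensor} for the extension from $\Cl(V)$ to $\CCl(V)$.
\item The inclusion $V \hookrightarrow \CCl(V)$ is $C_2$-equivariant by construction, so the induced action on $V$ from $\CCl(V)$ agrees with the original Real structure on $V$.
\item The Real structure on $\O(V) \subset \T_{C_2}(V,V)$ is the conjugation action of Remark \ref{real.hom}: $\overline{\phi}(v) = \overline{\phi(\bar v)}$.
\item The involution $v \mapsto \bar v$ on $V$ is self-inverse.
\end{enumerate}

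With these in hand, the computation is essentially one line:
\[
\overline{\rho_V(g)}(v) \;=\; \overline{\rho_V(g)(\bar v)} \;=\; \overline{(-1)^{|g|}\, g\, \bar v\, g^{-1}} \;=\; (-1)^{|\bar g|}\, \bar g\, v\, \bar g^{-1} \;=\; \rho_V(\bar g)(v),
\]
where the third equality uses multiplicativity of the action on $\CCl(V)$, the grading fact $|\bar g| = |g|$, and the involutivity $\overline{\bar v} = v$. This gives $\overline{\rho_V(g)} = \rho_V(\bar g)$ in $\O(V)$, which is the definition of $\rho_V$ being Real.
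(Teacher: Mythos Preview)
Your proof is correct and essentially identical to the paper's: both unwind the conjugation action on $\O(V)$, apply the definition of $\rho_V$, and then use multiplicativity and evenness of the $C_2$-action on $\CCl(V)$ to conclude. The only cosmetic difference is that the paper inserts the intermediate step $(-1)^{|g|}\overline{g}v\overline{g}^{-1} = (-1)^{|\overline{g}|}\overline{g}v\overline{g}^{-1}$ to highlight where $|\bar g| = |g|$ is used, whereas you fold that into a single equality.
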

\begin{proof}
Let $g \in \Pin^c(V)$ and $v \in V$. Since the $C_2$-action on $\Pin^c(V)$ is even, we have 
$$
\begin{aligned}
\overline{\rho_V(g)}(v) &= \overline{\rho_V(g)(\overline{v})} = \overline{(-1)^{|g|} g\overline{v}g^{-1}} = (-1)^{|g|}\overline{g}v\overline{g}^{-1} = (-1)^{|\overline{g}|}\overline{g}v\overline{g}^{-1} = \rho_V(\overline{g})(v).
\end{aligned}
$$
\end{proof}

Remark \ref{real.hom} also says that $V$ is a Real $\O(V)$-space, so Proposition \ref{real.props} gives $L^2(V)$ the structure of a Real $\O(V)$-space. By Propositions \ref{real.props} and \ref{prop.rho}, $L^2(V)$ becomes a Real $\Pin^c(V)$-space. Left multiplication also makes $\CCl(V)$ into a Real $\Pin^c(V)$-space, so Proposition \ref{real.tensor} gives us a Real $\Pin^c(V)$-action on $\CCl(V) \otimes L^2(V)$. Since $\Pin^c(V)$ acts on $\CCl(V) \otimes L^2(V)$ by unitary operators, we get an induced Real homomorphism (Proposition \ref{real.props}) $i_V \colon \Pin^c(V) \to \U_V$. Unwinding the definitions yields the formula,
$$i_V(g)(v \otimes f) = (gv) \otimes (f\circ \rho_V(g)^{-1}),$$
as in \cite{Joachim}. This formula makes it clear that $i_V$ is even, so it restricts to a Real homomorphism $$i_V \colon \Spin^c(V) \to \U_V^{\ev}.$$

Thus, we get a Real $\Spin^c(V)$-action  on $\U_V^{\ev}$ by left multiplication. 

\begin{proposition}\label{ESpinc}
If $V \neq 0$, then the $\Spin^c(V)$-space $\U_V^{\ev}$ is a model for the universal $\Spin^c(V)$-principal bundle, $\ESpin^c(V)$. 
\end{proposition}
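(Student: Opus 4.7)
The key observation is that Proposition~\ref{ESpinc} is essentially a non-equivariant statement: being a model for $\ESpin^c(V)$ requires only that the total space be contractible and that $\Spin^c(V)$ act freely with the quotient being locally trivial, and the Real structures introduced above play no role in verifying these properties. Accordingly, the plan is to check the two classical conditions characterizing $EG$-models, reducing essentially to facts already treated by Joachim in \cite{Joachim}.

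First, I would verify that $\U_V^{\ev}$ is contractible. Since $V \neq 0$, the space $L^2(V)$ is an infinite-dimensional Hilbert space, and hence so is $\CCl(V) \otimes L^2(V)$. Decomposing this Hilbert space into its even and odd graded pieces $\HH^{\ev}$ and $\HH^{\odd}$, an element of $\U_V^{\ev}$ is a pair of unitaries on $\HH^{\ev}$ and $\HH^{\odd}$, so $\U_V^{\ev} \cong \U(\HH^{\ev}) \times \U(\HH^{\odd})$ as topological groups. Each factor is the unitary group of a separable infinite-dimensional Hilbert space in the strong $*$-topology, so Kuiper's theorem applies to each factor, giving contractibility of $\U_V^{\ev}$.

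Next, I would check that left multiplication by $i_V(\Spin^c(V))$ makes $\U_V^{\ev} \to \U_V^{\ev}/\Spin^c(V)$ into a principal $\Spin^c(V)$-bundle. Freeness is automatic once $i_V$ is injective, which follows from the explicit formula $i_V(g)(v\otimes f) = (gv) \otimes (f \circ \rho_V(g)^{-1})$, together with the nondegeneracy of left multiplication by $\Pin^c(V) \subset \CCl(V)$ on $\CCl(V)$ itself. Since $\Spin^c(V)$ is a compact Lie group and $\U_V^{\ev}$ is Hausdorff in the strong $*$-topology, $i_V$ realizes $\Spin^c(V)$ as a closed subgroup, and a standard slice argument produces local sections of the quotient map.

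The main technical point, which I expect to be the actual work, is the local triviality of the quotient $\U_V^{\ev} \to \U_V^{\ev}/\Spin^c(V)$, since $\U_V^{\ev}$ is not a finite-dimensional manifold; the compactness of $\Spin^c(V)$ and the smoothness of the embedding $i_V$ (arising from the finite-dimensional subalgebra $\CCl(V) \subset \BB(\CCl(V)\otimes L^2(V))$) are what allow the usual slice construction to go through. This point is handled in Joachim's work, and once it is cited, the proof is complete by assembling the pieces above.
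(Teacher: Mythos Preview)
Your proposal is correct and follows essentially the same approach as the paper: decompose $\U_V^{\ev}$ as a product of unitary groups on the even and odd summands and apply Kuiper's theorem for contractibility, then invoke injectivity of $i_V$ for freeness. The paper's proof is in fact more terse than yours---it does not discuss local triviality at all and simply records contractibility plus freeness as sufficient---so your additional care on that point is not required here.
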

\begin{proof}
Grading preserving unitary operators can be restricted to the even and odd subspaces to obtain $\U_V^{\ev} \cong \U((\CCl(V) \otimes L^2(V))^{\ev}) \times \U((\CCl(V) \otimes L^2(V))^{\text{odd}})$, which is contractible by Kuiper's theorem \cite{Kuiper}. The $\Spin^c(V)$-action on $\U_V^{\ev}$ is free, since $i_V$ is injective. 
\end{proof}

We will also use $\U^{\ev}_V /\Spin^c(V)$ to model $\BSpin^c(V)$.  By Proposition \ref{real.props}, the associated bundle $\gamma_{\Spin^c(V)} = \U_V^{\ev} \times_{\Spin^c(V)} V$ inherits the structure of a Real space.

\begin{definition}
For $V\in \II_{C_2}$, define the Real space $$\MSpin^c_{\RR}(V) := \text{Thom}(\gamma_{\Spin^c(V)})$$ to be the Thom space of the universal $\Spin^c(V)$-vector bundle, extending the Real structure on $\gamma_{\Spin^c(V)}$ by fixing $\infty$. 
\end{definition}

In order to give $\MSpin^c_{\RR}$ the structure of a $C_2$-$E_\infty$-ring spectrum, we will realize it as a lax symmetric monoidal functor on $\II_{C_2}$. To do this, we first note that given $V, W \in \II_{C_2}$, we get a canonical (even) map $T_{V,W} \colon \U_{V} \times \U_{W} \to \U_{V \oplus W}$, by taking the tensor product of operators, since 
\begin{equation}\label{tensor.clifford}
\begin{aligned}
\CCl(V \oplus W) \otimes L^2(V \oplus W) &\cong (\CCl(V) \otimes \CCl(W)) \otimes (L^2(V) \otimes L^2(W)) \\ &\cong (\CCl(V) \otimes L^2(V)) \otimes (\CCl(W) \otimes L^2(W)).
\end{aligned}
\end{equation}
Similarly, given an isomorphism $\iota \colon V \to W$, we get an isomorphism $\tilde{\iota} \colon \CCl(V)\otimes L^2(V) \to \CCl(W)\otimes L^2(W)$, which  induces a homomorphism $\U_{\iota} \colon \U_{V} \to \U_{W}$, defined by $\U_{\iota}(A) = \tilde{\iota} A \tilde{\iota}^{-1}$. Furthermore, note that these assignments make $\U_{(\;\;)}$ into a functor with respect to isomorphisms. Similarly, $T_{(\;,\;)}$ is a natural transformation. 

\begin{proposition}
Given $V,W \in \II_{C_2}$ of the same dimension, the map  $\U \colon \O(V,W) \to \Hom(\U_{V},\U_{W})$ defined above is $C_2$-equivariant.
\end{proposition}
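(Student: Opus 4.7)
The plan is to unwind the definitions of the two $C_2$-actions and verify the equality $\U_{\bar\iota} = \overline{\U_\iota}$ in $\Hom(\U_V, \U_W)$ for each $\iota \in \O(V,W)$. Both actions are the conjugation actions from Remark \ref{real.hom}: on $\O(V,W)$ we have $\bar\iota = c_W \circ \iota \circ c_V^{-1}$, where $c_V, c_W$ denote the Real structures on $V$ and $W$, while on $\Hom(\U_V, \U_W)$ we have $\overline{\U_\iota}(A) = \overline{\U_\iota(\overline{A})}$.

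The first step is to show that the assignment $\iota \mapsto \tilde\iota$, sending an isometry $V\to W$ to the induced isometric isomorphism $\tilde\iota \colon \CCl(V)\otimes L^2(V) \to \CCl(W)\otimes L^2(W)$, is itself $C_2$-equivariant, i.e.\ $\widetilde{\bar\iota} = \overline{\tilde\iota}$. This reduces to two observations: the Clifford algebra functor $\Cl(-)$ respects $C_2$-actions by its universal property (so $\Cl(\bar\iota) = \overline{\Cl(\iota)}$), and composition with $c_V^{-1}$ on the right and $c_W$ on the left is precisely the conjugation action on the space of unitary maps $L^2(V)\to L^2(W)$ (by the pushforward Real structure of Proposition \ref{real.props}). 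Combining these via the tensor product rule of Proposition \ref{real.tensor} then yields the claim.

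The second step is the straightforward fact that conjugation on bounded operators is multiplicative: $\overline{S T S^{-1}} = \overline{S}\,\overline{T}\,\overline{S}^{-1}$, which follows directly from the pointwise formula $\overline{f}(x) = \overline{f(\overline{x})}$ used in Remark \ref{real.hom}. Putting the pieces together gives
\[
\overline{\U_\iota}(A) = \overline{\U_\iota(\overline{A})} = \overline{\tilde\iota\, \overline{A}\, \tilde\iota^{-1}} = \overline{\tilde\iota}\, A\, \overline{\tilde\iota}^{-1} = \widetilde{\bar\iota}\, A\, \widetilde{\bar\iota}^{-1} = \U_{\bar\iota}(A),
\]
which is exactly the $C_2$-equivariance to be verified.

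The only substantive point is step one, the equivariance of $\iota \mapsto \tilde\iota$; once that is established the rest is a single line. I would expect most of the verbiage in the proof to consist of pinning down the Real structure on $\CCl(V)\otimes L^2(V)$ from the earlier constructions and then invoking naturality, rather than any genuinely new computation.
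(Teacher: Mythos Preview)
Your proposal is correct and follows essentially the same approach as the paper: the paper's proof is precisely your displayed chain of equalities (applied pointwise to an element $v\otimes f$), implicitly using $\overline{\tilde\iota}=\widetilde{\bar\iota}$ without further comment. If anything, your ``step one'' supplies more justification for that identity than the paper does.
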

\begin{proof}
We have
$$
\begin{aligned}
\overline{\U_{\iota}}(A)(v \otimes f) &= \overline{\U_{\iota}(\overline{A})}(v \otimes f) \\ &= \overline{\tilde{\iota}\overline{A}\tilde{\iota}^{-1}(\overline{v\otimes f})} \\
&= \overline{\tilde{\iota}}A\overline{\tilde{\iota}}^{-1}(v\otimes f ) \\
&= \U_{\overline{\iota}}(v \otimes f).
\end{aligned}
$$
\end{proof}

Now, let $V,W \in \II_{C_2}$, and $n = \dim W - \dim V \geq 0$. We define the structure maps 
$$
\II_{C_2}(V,W) \to \T_{C_2}(\MSpin^c_{\RR}(V),\MSpin^c_{\RR}(W)),
$$
using the adjoint form 
$$
\II_{C_2}(V,W) \wedge \MSpin^c_{\RR}(V) \xrightarrow{\sigma_{V,W}} \MSpin^c_{\RR}(W),
$$
by taking the Thomification of the vector bundle map

\[\begin{tikzcd}
(W-V) \times (\U_V^{\ev} \times_{\Spin^c(V)} V) \arrow[r,"\widetilde{\sigma}_{V,W}"] \arrow[d] & \U^{\ev}_{W} \times_{\Spin^c(W)} W \arrow[d] \\
\O(V,W) \times \BSpin^c(V) \arrow[r] & \BSpin^c(W).
\end{tikzcd}\]

Informally, $\widetilde{\sigma}_{V,W}$ is the map
$$
\begin{tikzcd}[column sep = -9mm]
{((\iota,w),[A,v])} \arrow[d,mapsto] & {\in \; (W-V) \times (\U_V^{\ev} \times_{\Spin^c(V)} V)} \arrow[d, "\widetilde{\sigma}_{V,W}"] \\ 
{[T_{W-\iota V,V}(\id,A),w+v]} & {\in \;\; \U^{\ev}_{W} \times_{\Spin^c(W)} W.} 
\end{tikzcd}
$$ 
The issue with this formulation is that $W-\iota V$ is not an object of $\II_{C_2}$. In order to properly define $\widetilde{\sigma}_{V,W}$, we make the $C_2$-equivariant identification
\[
\begin{tikzcd}
W-V \arrow[r,"\sim"] \arrow[d] & \O(\RR^n \oplus V,W) \times_{\O(n)} \RR^n \arrow[d] \\
\O(V,W) \arrow[r,"\sim"] & \O(\RR^n \oplus V,W)/\O(n).
\end{tikzcd}
\]

\begin{definition}
Define
\[
\widetilde{\sigma}_{V,W} \colon (\O(\RR^n \oplus V,W) \times_{\O(n)} \RR^n ) \times (\U_V^{\ev} \times_{\Spin^c(V)} V) \to \U^{\ev}_{W} \times_{\Spin^c(W)} W,
\]
by
\[
\widetilde{\sigma}_{V,W}([\iota,x],[A,v]) = [\U_{\iota}(T_{\RR^n,V}(\id,A)),\iota(x + v)].
\]
\end{definition}

\begin{proposition}
The map $\widetilde{\sigma}_{V,W}$ is $C_2$-equivariant.
\end{proposition}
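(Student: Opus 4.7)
The plan is to verify equivariance by direct computation, reducing to the $C_2$-equivariance of the constituent pieces, namely the map $\U \colon \O(V,W) \to \Hom(\U_V,\U_W)$ (just proved), the tensoring transformation $T_{V,W}$, and the addition $V \oplus W \to W$. The key structural observation is that in the identification $W - V \cong \O(\RR^n \oplus V, W) \times_{\O(n)} \RR^n$, the summand $\RR^n$ is given the \emph{trivial} $C_2$-representation (it is a complement of the image of $V$ in $W$ after passing to orbits of $\O(n)$, and the orthogonal complement of $V$'s image carries the residual $C_2$-action on $\O(n)$-orbits only through the first coordinate). Consequently, the $C_2$-action on a class $[\iota, x]$ is $\overline{[\iota,x]} = [\bar\iota, x]$, with no bar on $x$.

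Granted this, I would first observe that the natural transformation $T_{\RR^n, V}$ is $C_2$-equivariant: since the isomorphism \eqref{tensor.clifford} is a tensor decomposition compatible with both the algebra structure and the grading, and $\RR^n$ has trivial action, one has $T_{\RR^n, V}(\id, \bar A) = \overline{T_{\RR^n, V}(\id, A)}$ for any $A \in \U_V$. I would record this as a short remark or invoke it as a case of the more general Proposition \ref{real.tensor}. Next I would compute
\[
\widetilde{\sigma}_{V,W}\bigl(\overline{[\iota,x]},\overline{[A,v]}\bigr) = \widetilde{\sigma}_{V,W}\bigl([\bar\iota, x],[\bar A,\bar v]\bigr) = \bigl[\U_{\bar\iota}(T_{\RR^n,V}(\id,\bar A)),\,\bar\iota(x+\bar v)\bigr],
\]
while on the other side
\[
\overline{\widetilde{\sigma}_{V,W}([\iota,x],[A,v])} = \bigl[\overline{\U_\iota(T_{\RR^n,V}(\id,A))},\,\overline{\iota(x+v)}\bigr].
\]
Applying $C_2$-equivariance of $\U_{(-)}$ and of $T_{\RR^n, V}$ shows the first coordinates match, and the computation $\overline{\iota(x+v)} = \bar\iota(\overline{x+v}) = \bar\iota(x + \bar v)$ (using that $\bar x = x$) shows the second coordinates match.

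The main obstacle, such as it is, is not the computation itself but justifying that $\RR^n$ in the identification $W - V \cong \O(\RR^n \oplus V, W) \times_{\O(n)} \RR^n$ carries the trivial $C_2$-action; once this bookkeeping is pinned down, everything reduces to equivariance properties already in hand. I would therefore spend a sentence setting up this identification carefully before executing the two-line calculation above.
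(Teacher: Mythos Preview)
Your proposal is correct and follows essentially the same approach as the paper: a direct computation using the $C_2$-equivariance of $\U_{(-)}$, of $T_{\RR^n,V}$, and of the second coordinate map. The only cosmetic difference is that the paper keeps writing $\overline{x}$ throughout (so $\overline{[\iota,x]}=[\overline{\iota},\overline{x}]$), whereas you explicitly invoke $\overline{x}=x$; since $\RR^n$ denotes the trivial representation by the paper's standing convention, your ``main obstacle'' is already a non-issue and needs no extra justification.
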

\begin{proof} We have
\begin{align*}
\sigma_{V,W}(\overline{[\iota,x],[A,v]}) = \sigma_{V,W}([\overline{\iota},\overline{x}],[\overline{A},\overline{v}])
& = [\U_{\overline{\iota}}(T_{\RR^n,V}(\id,\overline{A})),\overline{\iota}(\overline{x}+\overline{v})]
\\
&= [\overline{(\U_{\iota})}(T_{\RR^n,V}(\overline{\id,A})),\overline{\iota(x+v)}]
\\
&=[\overline{\U_{\iota}(T_{\RR^n,V}(\id,A))},\overline{\iota(x+v)}]
\\
&=
\overline{[\U_{\iota}(T_{\RR^n,V}(\id,A)),\iota(x)+\iota(v)]}
\\
&=
\overline{\sigma_{V,W}([\iota,x],[A,v])}.
\end{align*}
\end{proof}

\begin{definition}\label{structure.maps}
Define $\sigma_{V,W} \colon \II_{C_2}(V,W) \wedge \MSpin^c_{\RR}(V) \to \MSpin^c_{\RR}(W)$ as the Thomification of $\widetilde{\sigma}_{V,W}$. 
\end{definition}

To define the unit, note that $\U_0^{\ev} = \U(1)$, so $\MSpin^c_{\RR}(0) = \mathrm{Thom}(\U(1) \times_{\Spin^c(0)} \{0\}) = S^0$. 

\begin{definition}\label{MSpinR.unit}
Let $\eta^{\MSpin^c_{\RR}}_0 \colon S^0 \to \MSpin^c_{\RR}(0)$ be the identity map on $S^0$. 
\end{definition}

\begin{definition}
Let $V,W \in \II_{C_2}$. Define 
$$
\mu^{\MSpin^c_{\RR}}_{V,W} \colon \MSpin^c_{\RR}(V) \wedge \MSpin^c_{\RR}(W) \to \MSpin^c_{\RR}(V \oplus W),  
$$
as the Thomification of the map 
$$
(\U^{\ev}_{V} \times_{\Spin^c(V)} V ) \times (\U^{\ev}_{W} \times_{\Spin^c(W)} W ) \to \U^{\ev}_{V\oplus W} \times_{\Spin^c(V\oplus W)} (V \oplus W) 
$$
defined by $([A,v],[B,w]) \mapsto [T_{V,W}(A,B),v+w]$.
\end{definition}

Note that $\mu^{\MSpin^c_{\RR}}_{V,W}$ is clearly $C_2$-equivariant.

\begin{proposition}\label{realspinbordism}
The data $(\MSpin^c_{\RR},\mu^{\MSpin^c_{\RR}},\eta^{\MSpin^c_{\RR}}_0)$ defines a lax symmetric monoidal enriched functor $\II_{C_2} \to \T_{C_2}$, and hence, a $C_2$-$E_{\infty}$-ring spectrum, $\MSpin^c_{\RR} \in \text{CAlg}(\Sp^{C_2})$. 
\end{proposition}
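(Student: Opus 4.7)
The plan is to leverage the non-equivariant construction of Joachim \cite{Joachim}, which establishes that exactly this assignment (with $C_2$-actions forgotten) defines a lax symmetric monoidal enriched functor $\II \to \T$ presenting $\MSpin^c$ as a commutative orthogonal ring spectrum. Under the equivalence of $\CAlg(\Sp^{C_2})$ with lax symmetric monoidal $\T^{C_2}$-enriched functors $\II_{C_2} \to \T_{C_2}$ \cite[Proposition 22.1]{mandell_may_schwede_shipley_2001}, it therefore suffices to upgrade Joachim's verifications to the $C_2$-equivariant setting. The main new content is verifying that every structure map in sight is $C_2$-equivariant; the underlying associativity, unitality, symmetry, and functoriality identities then hold because they hold in $\T$ and all maps involved are now Real.

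First I would verify that the structure maps $\sigma_{V,W}$ from Definition \ref{structure.maps} assemble into a $\T^{C_2}$-enriched functor. This requires checking compatibility with composition in $\II_{C_2}$ and with identities, which at the level of $\widetilde{\sigma}_{V,W}$ reduces to two statements: the functoriality of the assignment $\iota \mapsto \U_\iota$ with respect to composition of linear isometries, and the naturality of the tensor product operators $T_{V,W}$ in both variables. Both identities hold non-equivariantly by Joachim's argument via the canonical isomorphism (\ref{tensor.clifford}), and by Proposition \ref{real.props} together with the equivariance of $\U$ and $T$ already proved, these identifications are Real. Equivariance of $\widetilde{\sigma}_{V,W}$ itself has already been checked above, so the Thomified maps $\sigma_{V,W}$ are Real, and the enriched functoriality follows.

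Next I would check the three lax symmetric monoidal axioms for $(\mu^{\MSpin^c_{\RR}}, \eta^{\MSpin^c_{\RR}}_0)$. Associativity of $\mu^{\MSpin^c_{\RR}}$ reduces to associativity of $T_{V,W}$ under the triple identification $\CCl(U \oplus V \oplus W) \otimes L^2(U \oplus V \oplus W) \cong \bigotimes_{X \in \{U,V,W\}} \CCl(X) \otimes L^2(X)$; unitality follows from $\U_0^{\ev} = *$ and $\MSpin^c_{\RR}(0) = S^0$; and the symmetry axiom follows from the braiding on the graded tensor product of Clifford algebras, which matches the symmetry isomorphism of $\II_{C_2}$ as in Joachim's treatment. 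Compatibility of $\mu^{\MSpin^c_{\RR}}$ with the structure maps $\sigma_{V,W}$ likewise reduces to the naturality of $T_{(-,-)}$ with respect to isometries. All of these identities are statements about specific maps built from $\Cl$, $L^2$, $\otimes$, and $\O(-,-)$, so equivariance is automatic from Propositions \ref{real.tensor}, \ref{real.props}, and the equivariance of $T$ and $\U$.

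The main obstacle, such as it is, lies not in any single deep computation but in the bookkeeping required to confirm that every identification used in Joachim's construction (in particular the graded-tensor symmetry isomorphism, the decomposition (\ref{tensor.clifford}), the isomorphism $W - V \cong \O(\RR^n \oplus V, W) \times_{\O(n)} \RR^n$, and the passage to associated bundles) is visibly $C_2$-equivariant when $V, W \in \II_{C_2}$. Each of these is an instance of the general principle that conjugation-based Real structures on spaces of $\CC$-linear or isometric maps are natural in Real isomorphisms, which is precisely the content of Remark \ref{real.hom} and Proposition \ref{real.props}. With these identifications in hand, the proof reduces to invoking Joachim's result on underlying spectra together with the equivariance statements already established in this section.
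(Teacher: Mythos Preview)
Your proposal is correct and takes essentially the same approach as the paper: reduce the functoriality and lax symmetric monoidal axioms to the functoriality of $\U_{(\;\;)}$ and naturality of $T_{(\;,\;)}$, observe that equivariance of all structure maps has already been established in the preceding lemmas, and conclude that associativity, unitality, and symmetry are clear from the constructions. The only difference is cosmetic: the paper writes out the two relevant diagrams (functoriality of $\sigma$ and naturality of $\mu$) explicitly and identifies each composite as the Thomification of a specific bundle map, whereas you defer these computations to Joachim's non-equivariant argument.
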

\begin{proof}

Verifying functoriality of $\MSpin^c_{\RR}$ is equivalent to showing that the following square commutes
\[
\begin{tikzcd}[column sep = 1.7cm]
\II_{C_2}(V_2,V_3) \wedge \II_{C_2}(V_1,V_2) \wedge \MSpin^c_{\RR}(V_1) \arrow[r,"\id \wedge {\sigma}_{V_1,V_2}"] \arrow[d] & \II_{C_2}(V_2,V_3) \wedge \MSpin^c_{\RR}(V_2)\arrow[d, "{\sigma}_{V_2,V_3}"] \\
\II_{C_2}(V_1,V_3) \wedge \MSpin^c_{\RR}(V_1) \arrow[r, "{\sigma}_{V_1,V_3}"] & \MSpin^c_{\RR}(V_3).
\end{tikzcd}
\]
This follows from the fact that both compositions are the Thomification of the same map of bundles

\[\begin{tikzcd}
(V_3-V_2) \times (V_2-V_1) \times (\U_{V_1}^{\ev} \times_{\Spin^c(V_1)} V_1) \arrow[r,"\phi"] \arrow[d] & \U^{\ev}_{V_3} \times_{\Spin^c(V_3)} V_3 \arrow[d] \\
\O(V_2,V_3) \times \O(V_1, V_2) \times \BSpin^c(V_1) \arrow[r] & \BSpin^c(V_3),
\end{tikzcd}\]

where $\phi$ takes
\[((\iota_2, v_3), (\iota_1,v_2),[A,v_1]) \in (V_3-V_2) \times(V_2-V_1) \times (\U_{V_1}^{\ev} \times_{\Spin^c(V_1)} V_1)\] to \[ [T_{V_3 - \iota_2 \iota_1 V_1,V_1}(\id,A),v_3+v_2+v_1] \in \U^{\ev}_{V_3} \times_{\Spin^c(V_3)} V_3.\]

Naturality of $\mu^{\MSpin^c_{\RR}}$ is equivalent to commutativity of 
$$
\begin{tikzcd}[column sep={0.7cm}]
    (\II_{C_2}(V_1,W_1) \wedge \MSpin^c_{\RR}(V_1)) \wedge (\II_{C_2}(V_2,W_2) \wedge  \MSpin^c_{\RR}(V_2)) \arrow[r] \arrow[d] & \MSpin^c_{\RR}(W_1) \wedge \MSpin^c_{\RR}(W_2) \arrow[dd] \\
    (\II_{C_2}(V_1,W_1) \wedge \II_{C_2}(V_2,W_2)) \wedge (\MSpin^c_{\RR}(V_1) \wedge \MSpin^c_{\RR}(V_2)) \arrow[d] & \\
    \II_{C_2}(V_1 \oplus V_2,W_1 \oplus W_2) \wedge \MSpin^c_{\RR}(V_1 \oplus V_2) \arrow[r] & \MSpin^c_{\RR}(W_1 \oplus W_2).
\end{tikzcd}
$$
Going around the top and then the right is the Thomification of the map
\[
(W_1-V_1) \times (W_2-V_2) \times (\U^{\ev}_{V_1} \times_{\Spin^c(V_1)} V_1) \times (\U^{\ev}_{V_2} \times_{\Spin^c(V_2)} V_2 ) \to \U^{\ev}_{W_1 \oplus W_2} \times_{\Spin^c(W_1 \oplus W_2)} (W_1 \oplus W_2),
\]
given by
\[
((\iota_1, w_1),(\iota_2, w_2),[A, v_1], [B, v_2]) \mapsto [T_{W_1, W_2}(T_{W_1-\iota_1V_1}(\id, A),T_{W_2-\iota_2V_2}(\id, B)), w_1+v_1+w_2+v_2].
\]
While going around the left and then the bottom is the Thomification of the map
\[
(W_1-V_1) \times (W_2-V_2) \times (\U^{\ev}_{V_1} \times_{\Spin^c(V_1)} V_1) \times (\U^{\ev}_{V_2} \times_{\Spin^c(V_2)} V_2 ) \to \U^{\ev}_{W_1 \oplus W_2} \times_{\Spin^c(W_1 \oplus W_2)} (W_1 \oplus W_2),
\]
given by
\[
((\iota_1, w_1),(\iota_2, w_2),[A, v_1], [B, v_2]) \mapsto [T_{W_1\oplus W_2 - \iota_1 \oplus \iota_2 (V_1 \oplus V_2), V_1 \oplus V_2}(\id, T_{V_1, V_2}(A, B)), w_1+w_2+ v_1+v_2].
\]

These two maps are equal by naturality of $T$. The associativity, unitality, and symmetry conditions are all clear directly by the constructions. Thus, $\MSpin^c_{\RR}$ is a lax symmetric monoidal functor. 

\end{proof}

\begin{proposition}\label{underlying.spinc}
The underlying spectrum of $\MSpin^c_{\RR}$ is $\MSpin^c$.
\end{proposition}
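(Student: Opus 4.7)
The plan is to identify the underlying spectrum of $\MSpin^c_{\RR}$ with Joachim's model of $\MSpin^c$ from \cite{Joachim}. By Definition \ref{underlying}, computing $(\MSpin^c_{\RR})^e$ amounts to restricting the enriched functor $\MSpin^c_{\RR} \colon \II_{C_2} \to \T_{C_2}$ to the full subcategory spanned by the trivial $C_2$-representations $\RR^n$. Since the entire construction of $\MSpin^c_{\RR}$ was explicitly modeled on Joachim's construction of $\MSpin^c$ with Real structures added on top, the strategy is essentially bookkeeping: check that erasing the Real structures at each stage recovers the corresponding ingredient in \cite{Joachim}.

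First I would verify the level-wise identification. When $V = \RR^n$ carries the trivial $C_2$-representation, the complex Clifford algebra $\CCl(V) = \CCl_n$, the graded Hilbert space $\CCl_n \otimes L^2(\RR^n)$, the even graded unitary group $\U_{\RR^n}^{\ev}$, the spin${}^c$ group $\Spin^c(n) \subset \CCl_n$, and the model $\U_{\RR^n}^{\ev}$ for $\ESpin^c(n)$ provided by Proposition \ref{ESpinc} all coincide with the corresponding constructions in \cite{Joachim}. In particular, the associated bundle $\gamma_{\Spin^c(n)} = \U_{\RR^n}^{\ev} \times_{\Spin^c(n)} \RR^n$ is the universal $\Spin^c(n)$-vector bundle, and so $\MSpin^c_{\RR}(\RR^n) = \mathrm{Thom}(\gamma_{\Spin^c(n)})$ is Joachim's $\MSpin^c(n)$ as a pointed space.

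Next I would check the spectrum-level structure. The structure map $\sigma_{\RR^n, \RR^{n+k}}$ from Definition \ref{structure.maps} is the Thomification of the bundle map sending $([\iota, x], [A, v])$ to $[\U_{\iota}(T_{\RR^k,\RR^n}(\id, A)), \iota(x+v)]$, which matches Joachim's structure map verbatim on trivial representations. The same comparison applies to the multiplication $\mu^{\MSpin^c_{\RR}}$ and the unit $\eta^{\MSpin^c_{\RR}}_0$, so together with Proposition \ref{realspinbordism} this produces the desired identification in $\mathrm{CAlg}(\Sp^{BC_2})$, and in particular an isomorphism of underlying orthogonal spectra. The only mild subtlety is the auxiliary identification $W - V \cong \O(\RR^k \oplus V, W) \times_{\O(k)} \RR^k$ used to circumvent the fact that $W - \iota V$ is not itself an object of $\II_{C_2}$, but this identification is natural and restricts to the analogous one in \cite{Joachim} on trivial representations, so no genuine obstacle arises.
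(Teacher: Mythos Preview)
Your overall strategy is right, but there is a genuine gap: you assume that Joachim's model of $\MSpin^c$ is literally built from $\U^{\ev}_V \times_{\Spin^c(V)} V$, so that forgetting the Real structure gives his spaces ``verbatim.'' In fact, Joachim's space $\mathbb{M}\!\Spin^c_V$ is defined as the Thom space of $\PU_V \times_{\O(V)} V$, not of $\U^{\ev}_V \times_{\Spin^c(V)} V$. The paper's proof supplies the missing identification by using the two maps of short exact sequences
\[
\begin{tikzcd}
1 \arrow[r] & \Spin^c(V) \arrow[r] \arrow[d,"i_V"] & \Pin^c(V) \arrow[r] \arrow[d,"i_V"] & \Z2 \arrow[r] \arrow[d,"="] & 1\\
1 \arrow[r] & \U^{\ev}_V \arrow[r] & \U_V \arrow[r] & \Z2 \arrow[r] & 1
\end{tikzcd}
\quad\text{and}\quad
\begin{tikzcd}
1 \arrow[r] & \U(1) \arrow[r] \arrow[d,"="] & \Pin^c(V) \arrow[r] \arrow[d,"i_V"] & \O(V) \arrow[r] \arrow[d,"j_V"] & 1\\
1 \arrow[r] & \U(1) \arrow[r] & \U_V \arrow[r] & \PU_V \arrow[r] & 1
\end{tikzcd}
\]
to obtain $\U^{\ev}_V \times_{\Spin^c(V)} V \cong \PU_V \times_{\O(V)} V$. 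Without this, your claim that the level-wise spaces ``coincide'' is unjustified.

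The same issue propagates to the unit. Joachim's unit $\widetilde{\eta}_V$ is the Thomification of $j_V \times_{\O(V)} \id_V \colon \O(V) \times_{\O(V)} V \to \PU_V \times_{\O(V)} V$, whereas the paper's unit $\eta^{\MSpin^c_{\RR}}_V$ arises from the structure map $\widetilde{\sigma}_{0,V}$ and $\eta^{\MSpin^c_{\RR}}_0$. These are a priori different maps into differently presented targets, and the paper verifies they agree via an explicit commutative square after fixing an isomorphism $\RR^n \xrightarrow{\sim} V$. Your assertion that ``the same comparison applies to \ldots the unit $\eta^{\MSpin^c_{\RR}}_0$'' skips this check. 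Once the bundle identification and the unit comparison are in place, the multiplication and structure maps do transport as you suggest, so the remainder of your outline is fine.
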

\begin{proof}
We compare our construction to the one in Section 6 of \cite{Joachim}. Since the underlying spectrum is defined by restricting to trivial $C_2$-representations we suppose that $V$ is a trivial $C_2$-representation. The following two maps of short exact sequences 
$$
\begin{tikzcd}
    1 \arrow[r] & \Spin^c(V) \arrow[r] \arrow[d,"i_V"] & \Pin^c(V) \arrow[r] \arrow[d,"i_V"] & \Z2 \arrow[r] \arrow[d,"="] & 1\\
    1 \arrow[r] & \U^{\ev}_V \arrow[r] & \U_V \arrow[r] & \Z2 \arrow[r] & 1 
\end{tikzcd}
$$
and
$$
\begin{tikzcd}
    1 \arrow[r] & \U(1) \arrow[r] \arrow[d,"="] & \Pin^c(V) \arrow[r] \arrow[d,"i_V"] & \O(V) \arrow[r] \arrow[d,"j_V"] & 1\\
    1 \arrow[r] & \U(1) \arrow[r] & \U_V \arrow[r] & \PU_V \arrow[r] & 1 
\end{tikzcd}
$$

allow us to identify $\gamma_{\Spin^c(V)} = \U^{\ev}_V \times_{\Spin^c(V)} V \cong \PU_V \times_{\O(V)} V$, which shows that the underlying space $\MSpin^c_{\RR}(V)$ is the same as the one denoted by $\mathbb{M}\!\Spin^c_V$ in \cite{Joachim}. Using this identification, we get a map 
$$
k_V \colon \O(V) \times_{\O(V)} V \xrightarrow{j_V \times_{\O(V)} \id_V } \PU_V \times_{\O(V)} V \xrightarrow{\sim} \U^{\ev}_V \times_{\Spin^c(V)} V.
$$ 

The map $\widetilde{\eta}_V \colon S^V \to \MSpin^c_{\RR}(V)$ obtained by taking the Thom construction of $k_V$ is the $V$-component of the lax monoidal natural transformation $S \to \MSpin^c$ playing the role of the unit of the ring spectrum defined in \cite{Joachim}. In our model, the $V$-component of the ring spectrum unit $\eta^{\MSpin^c_{\RR}}_V \colon S^V \to \MSpin^c_{\RR}(V)$ is obtained by combining the structure maps of Definition \ref{structure.maps} with the functor unit  $\eta^{\MSpin^c_{\RR}} \colon S^0 \to \MSpin^c_{\RR}(0)$ from Definition \ref{MSpinR.unit}. Specifically,  $\eta^{\MSpin^c_{\RR}}_V$ is the Thomification of $\widetilde{\sigma}_{0,V} \colon \O(\RR^n,V) \times_{\O(n)} \RR^n \to \U^{\ev}_V \times_{\Spin^c(V)} V$, defined above by $\widetilde{\sigma}_{0,V}([\iota,x]) = [\id, \iota(x)]$. Fixing a specific isomorphism $\varphi \colon \RR^n \xrightarrow{\sim} V$, the diagram
$$
\begin{tikzcd}
\O(\RR^n,V) \times_{\O(n)} \RR^n \arrow[r,"\widetilde{\sigma}_{0,V}"] \arrow[d,"\widetilde{\varphi}"'] & \U^{\ev}_V \times_{\Spin^c(V)} V \\
\O(V) \times_{\O(V)} V \arrow[r, "j_V \times_{\O(V)} \id_V"'] & \PU_V \times_{\O(V)} V, \arrow[u,"\cong"']
\end{tikzcd}
$$
where $\widetilde{\varphi}([\iota,x]) = [\iota\varphi^{-1},\varphi(x)]$, commutes, since $$(j_V \times_{\O(V)} \id_V)([\iota \varphi^{-1}, \varphi(x)]) = [j_V(\iota \varphi^{-1}),\varphi(x)] = [\id,(\iota \varphi^{-1})\varphi(x)] = [\id,\iota(x)].$$
Thus, $\eta^{\MSpin^c_{\RR}}_V = \widetilde{\eta}_V$. It is also evident that the multiplication maps are the same as those in \cite{Joachim}. Thus, $(\MSpin^c_{\RR})^{e} = \MSpin^c$. 
\end{proof}

\begin{remark}
Another way to organize the data of a commutative orthogonal ring spectrum is via the formalism of  $\mathscr{I}$-FSP's \cite{K-theory/0408}. This is the language that Joachim \cite{Joachim} uses; however, it is not hard to see that our constructions agree with his when the models are compared. 
\end{remark}

\section{The fixed points of $\MSpin^c_{\RR}$} \label{sec.fixed}

Recall that the inclusion $\Spin(n) \hookrightarrow \Spin^c(n)$ factors through an isomorphism with the $C_2$-fixed points of $\Spin^c(n) = \Spin^c(n,0)$. In light of this, one might hope for a similar situation on Thom spectra. 

$$
\begin{tikzcd}[column sep = 0mm, row sep = 2mm]
    \Spin(n) \arrow[rr, hook ] \arrow[ddr,"\cong"'] & & \Spin^c(n) && \MSpin \arrow[rr, "u"] \arrow[ddr, dashed, "\Tilde{u}"'] & & \MSpin^c \\ &&& \rightsquigarrow &&& \\
    & \Spin^c(n)^{C_2} \arrow[ruu, hook] & && & (\MSpin^c_{\RR})^{C_2}. \arrow[ruu] &
\end{tikzcd}
$$

Nevertheless, in general, taking (homotopy) fixed points does not commute with taking Thom spectra, so we cannot expect such a factorization via an equivalence (see Remark \ref{geometric.remark} for a discussion of the geometric fixed points). In this section, we show that $u$ does factor through $\tilde{u} \colon \MSpin \to (\MSpin^c_{\RR})^{C_2}$ as above, but that $\Tilde{u}$ cannot be an equivalence (Corollary \ref{cor.counterspin}). 

\begin{lemma}\label{lem.fixedunitary}
    For $V \in \II_{C_2}$ with $\dim V \geq 1$, the space $(\U^{\ev}_{V})^{C_2}$ is contractible. 
\end{lemma}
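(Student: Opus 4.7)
The plan is to identify $(\U^{\ev}_V)^{C_2}$ with a product of orthogonal groups of infinite-dimensional separable real Hilbert spaces, and then invoke the real analogue of Kuiper's theorem.

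First, I would apply Proposition \ref{real.tensor.C} to the Real Hilbert space $\CCl(V) \otimes L^2(V)$, obtaining an isomorphism of Real vector spaces
$$
\CCl(V) \otimes L^2(V) \;\cong\; H \otimes_{\RR} \CC,
$$
where $H := (\CCl(V) \otimes L^2(V))^{C_2}$ and the Real structure on the right is complex conjugation on the second factor. Because the $C_2$-action on the left preserves the $\ZZ/2$-grading (all three ingredients -- the action on $\CCl(V)$, the action on $L^2(V)$ by parity of functions, and the tensor product action -- are even), this isomorphism is grading-preserving, giving a decomposition $H = H^{\ev} \oplus H^{\text{odd}}$ of real Hilbert spaces.

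Next, I would verify that a grading-preserving unitary $A \in \U^{\ev}_V$ is $C_2$-fixed, in the sense that $\overline{A(\overline{x})} = A(x)$, if and only if it arises by extension of scalars from an $\RR$-linear orthogonal operator $B$ on $H$ that preserves the grading. Combined with the splitting
$$
\U^{\ev}_V \cong \U\bigl((\CCl(V) \otimes L^2(V))^{\ev}\bigr) \times \U\bigl((\CCl(V) \otimes L^2(V))^{\text{odd}}\bigr)
$$
used in the proof of Proposition \ref{ESpinc}, this yields a homeomorphism
$$
(\U^{\ev}_V)^{C_2} \;\cong\; \O(H^{\ev}) \times \O(H^{\text{odd}}),
$$
where the orthogonal groups carry the strong operator topology. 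Finally, since $\dim V \geq 1$, both parities of $L^2(V)$ are infinite-dimensional and separable, so the same is true for $H^{\ev}$ and $H^{\text{odd}}$. The real analogue of Kuiper's theorem \cite{Kuiper} then implies that each factor is contractible, hence so is $(\U^{\ev}_V)^{C_2}$.

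The main obstacle I anticipate is the bookkeeping in the second step: one has to check carefully that the Real structure commutes appropriately with both the $\CC/\RR$ decomposition coming from Proposition \ref{real.tensor.C} and the Clifford $\ZZ/2$-grading, and that the subspace topology on the fixed-point set (inherited from the strong $*$-topology on $\BB(\CCl(V) \otimes L^2(V))$) coincides with the strong operator topology on the orthogonal group of $H$ under the extension-of-scalars identification. Once this bookkeeping is settled, the contractibility is immediate from the real Kuiper theorem.
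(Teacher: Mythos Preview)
Your proposal is correct and follows essentially the same argument as the paper: split into even and odd summands, identify the $C_2$-fixed unitaries with the orthogonal group of the real part via Proposition~\ref{real.tensor.C}, and apply the real version of Kuiper's theorem. The paper performs the grading splitting first and then the complexification identification on each summand, whereas you complexify first and then split, but this is only a cosmetic reordering.
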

\begin{proof}
    Restricting a grading preserving operator to the even and odd subspaces yields,  $$(\U^{\ev}_{V})^{C_2} \cong \U(\HH_V^0)^{C_2} \times \U(\HH_V^1)^{C_2},$$ where $\HH_V^0, \HH_V^1$ are the even and odd parts of $\CCl(V) \otimes L^2(V)$, respectively, and $\U(\HH)^{C_2}$ is the group of Real unitary operators on the Real Hilbert space $\HH$. Since $\HH \cong \HH^{C_2} \otimes \CC$ (Proposition \ref{real.tensor.C}),  $\U(\HH)^{C_2}$ can be identified with the orthogonal group $\O(\HH^{C_2})$ of the real Hilbert space $\HH^{C_2}$. By the real version of Kuiper's theorem \cite{Kuiper}, $\O(\HH^{C_2})$ is contractible, which implies $(\U^{\ev}_{V})^{C_2}$ is contractible. 
\end{proof}

\begin{definition}
    Let $\MSpin(n) := \text{Thom}((\U^{\ev}_{\RR^n})^{C_2} \times_{\Spin(n)} \RR^n)$, where $\Spin(n)$ is identified with $\Spin^c(n)^{C_2}$.
\end{definition}

\begin{proposition}
    The spaces $\MSpin(n)$ form an orthogonal spectrum, with the usual structure maps, equivalent to $\MSpin$.
\end{proposition}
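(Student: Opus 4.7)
The plan is to proceed in three steps: first, identify $(\U^{\ev}_{\RR^n})^{C_2}$ as a model for $E\Spin(n)$; second, construct structure maps by mimicking Definition \ref{structure.maps} and restricting the formulas there to fixed points at trivial representations; and third, identify the resulting spectrum with a standard orthogonal spectrum model of $\MSpin$.

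For the first step, Lemma \ref{lem.fixedunitary} gives contractibility of $(\U^{\ev}_{\RR^n})^{C_2}$ for $n \geq 1$. Since $i_{\RR^n}$ is a Real injective homomorphism, its restriction $\Spin(n) = \Spin^c(n)^{C_2} \hookrightarrow (\U^{\ev}_{\RR^n})^{C_2}$ is also injective, yielding a free $\Spin(n)$-action. Hence $(\U^{\ev}_{\RR^n})^{C_2}$ is a model for $E\Spin(n)$, and $\MSpin(n)$ is a Thom space of the universal rank-$n$ spin vector bundle, which has the correct homotopy type.

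For the second step, I mimic Definition \ref{structure.maps} by using the same formula $([\iota, x], [A, v]) \mapsto [\U_\iota(T_{\RR^{m-n}, \RR^n}(\id, A)), \iota(x + v)]$ on the fixed-point total spaces. Because $T$ is $C_2$-equivariant and $\id$ is a fixed point, $T(\id, A) \in (\U^{\ev}_{\RR^m})^{C_2}$ whenever $A \in (\U^{\ev}_{\RR^n})^{C_2}$; and since $\RR^n, \RR^m$ carry trivial $C_2$-actions, every $\iota \in \O(\RR^n, \RR^m)$ is automatically equivariant, so $\U_\iota$ preserves fixed points. Thomification yields well-defined structure maps, and functoriality is inherited from the proof of Proposition \ref{realspinbordism} by restricting to fixed points throughout.

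For the third step, I repeat the argument of Proposition \ref{underlying.spinc} with every group replaced by its $C_2$-fixed subgroup, obtaining canonical identifications that realize $\MSpin(n)$ with its structure maps as the classical orthogonal spectrum model of $\MSpin$. The main obstacle will be the fixed-point version of the sequence $1 \to \U(1) \to \Pin^c(n) \to \O(n) \to 1$, where $\U(1)^{C_2} = \{\pm 1\}$ replaces $\U(1)$; one needs to check that the resulting projective quotient of $(\U^{\ev}_{\RR^n})^{C_2}$ still provides the correct $\O(n)$-principal bundle interpretation needed to identify $\MSpin(n)$ with the standard Thom space and to match up the structure maps with the classical ones. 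Once this comparison is in place, the identification with $\MSpin$ is immediate.
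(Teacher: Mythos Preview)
Your first two steps are correct and match the paper's proof exactly: Lemma \ref{lem.fixedunitary} plus freeness of the $\Spin(n)$-action identifies $(\U^{\ev}_{\RR^n})^{C_2}$ as a model for $\ESpin(n)$, and the structure maps are obtained by restricting those of Section \ref{sec.realspin} to fixed points at trivial representations. The paper's proof stops here, simply noting that ``this is a standard Thom spectrum construction.''

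Your third step is unnecessary, and the obstacle you flag there is one you have created for yourself. Once step one establishes that $(\U^{\ev}_{\RR^n})^{C_2} \times_{\Spin(n)} \RR^n$ is a model for the universal spin vector bundle, the resulting Thom spectrum is automatically weakly equivalent to $\MSpin$: any two models of $\ESpin(n)$ are related by a $\Spin(n)$-equivariant equivalence, which induces a levelwise equivalence of the associated Thom spaces compatible with the structure maps. There is no need to reproduce the explicit comparison of Proposition \ref{underlying.spinc}, and in particular no need to analyze the fixed-point sequence $1 \to \{\pm 1\} \to \Pin(n) \to \O(n) \to 1$ or a projective-unitary description. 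That proposition served a different purpose---matching Joachim's specific $\PU_V$-based model of $\MSpin^c$---whereas here the target is just ``$\MSpin$'' as a homotopy type, for which the universal property suffices.
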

\begin{proof}
By Lemma \ref{lem.fixedunitary}, the space $(\U^{\ev}_{\RR^n})^{C_2}$ is contractible with a free $\Spin(n)$-action, and thus is a model for $\ESpin(n)$. From there, this is a standard Thom spectrum construction. For example, one can follow the same steps taken in Section \ref{sec.realspin} with the obvious modifications.
\end{proof}

\begin{definition}
    Let $\Tilde{u}(n) \colon \MSpin(n) \to \MSpin^c_{\RR}(\RR^n)$ be the Thomification of the composite
$$
(\U^{\ev}_{\RR^n})^{C_2} \times_{\Spin(n)} \RR^n \to \U^{\ev}_{\RR^n} \times_{\Spin(n)} \RR^n \to \U^{\ev}_{\RR^n} \times_{\Spin^c(n)} \RR^n.
$$
induced by the inclusions $(\U^{\ev}_{\RR^n})^{C_2} \hookrightarrow \U^{\ev}_{\RR^n}$ and $\Spin(n) \hookrightarrow \Spin^c(n)$. 
\end{definition}

By construction, the image of $\Tilde{u}(n)$ is contained in the $C_2$-fixed points of $\MSpin^c_{\RR}(\RR^n)$.

\begin{proposition}\label{spin.fixedpoints}
    The maps $\Tilde{u}(n)$ define a map of commutative orthogonal ring spectra $$\Tilde{u} \colon \MSpin \to (\MSpin^c_{\RR})^{C_2}.$$
\end{proposition}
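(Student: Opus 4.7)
The plan is to factor $\Tilde{u}$ through the categorical fixed points. Since, by construction, each $\Tilde{u}(n)$ lands inside $\MSpin^c_{\RR}(\RR^n)^{C_2} = F^{C_2}(\MSpin^c_{\RR})(n)$, it suffices to show that the maps $\Tilde{u}(n)$ assemble into a map of commutative orthogonal ring spectra $\MSpin \to F^{C_2}(\MSpin^c_{\RR})$; composing with the symmetric monoidal natural transformation $F^{C_2} \to (\;\;)^{C_2}$ of Proposition \ref{categorical.genuine} then produces the desired $E_{\infty}$-map.

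To set this up, I would first observe that $\MSpin$, as built above, is constructed by exactly the same recipe as $\MSpin^c_{\RR}$ in Section \ref{sec.realspin}, but with $\U_V^{\ev}$ replaced by $(\U_V^{\ev})^{C_2}$ and $\Spin^c(V)$ replaced by $\Spin(V) = \Spin^c(V)^{C_2}$ (and restricted to trivial representations). In particular, its structure maps, unit, and multiplication are the Thomifications of the obvious analogs of $\widetilde{\sigma}_{V,W}$, $\eta_0^{\MSpin^c_{\RR}}$, and $\mu^{\MSpin^c_{\RR}}_{V,W}$. The map $\Tilde{u}$ is then induced by the pair of compatible inclusions $(\U^{\ev}_V)^{C_2} \hookrightarrow \U^{\ev}_V$ and $\Spin(n) \hookrightarrow \Spin^c(n)$.

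Checking that $\Tilde{u}$ respects the structure maps, the unit, and the multiplication reduces to the observation that the tensor-product homomorphism $T_{V,W}$ and the conjugation homomorphism $\U_{\iota}$ are both natural in (Real) isomorphisms and restrict along these subgroup inclusions. Concretely, $T_{V,W}$ restricts to a map $(\U^{\ev}_V)^{C_2} \times (\U^{\ev}_W)^{C_2} \to (\U^{\ev}_{V\oplus W})^{C_2}$ compatible with $\Spin(V) \times \Spin(W) \to \Spin(V\oplus W)$, and similarly $\U_{\iota}$ restricts on fixed points and is compatible with the inclusion into $\Spin^c$. From this, each of the relevant diagrams of bundle maps (the structure map square from Definition \ref{structure.maps}, the multiplication square from Proposition \ref{realspinbordism}, and the trivial unit square at $V = 0$) commutes before Thomification, and Thomification preserves commutativity.

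The main obstacle is bookkeeping rather than substance: one has to keep track of the various identifications, especially the rewriting $W - V \cong \O(\RR^n \oplus V, W) \times_{\O(n)} \RR^n$ used to define $\widetilde{\sigma}_{V,W}$, and verify that they restrict compatibly to the $\Spin/\Spin^c$-fixed data. Since every identification used in Section \ref{sec.realspin} is $C_2$-equivariant and natural in the relevant subgroup inclusions, the verification is routine and parallels the functoriality and monoidality arguments already given in Proposition \ref{realspinbordism}.
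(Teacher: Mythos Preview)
Your proposal is correct and follows exactly the paper's approach: factor through the categorical fixed points $F^{C_2}\MSpin^c_{\RR}$, observe that the $\Tilde{u}(n)$ assemble into a symmetric monoidal natural transformation there, and then compose with the symmetric monoidal natural transformation $F^{C_2} \to (\;\;)^{C_2}$ of Proposition~\ref{categorical.genuine}. The paper's proof simply asserts the first step without spelling out the bundle-level bookkeeping you describe, but the strategy is identical.
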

\begin{proof}
    The maps $\tilde{u}(n)$ define a symmetric monoidal natural transformation between commutative monoids in $\Sp$ from $\MSpin$ to the categorical fixed points $F^{C_2}\MSpin^c_{\RR}$. By Proposition \ref{categorical.genuine}, this induces a symmetric monoidal natural transformation to $(\MSpin^c_{\RR})^{C_2}$.
\end{proof}

We now show that $\tilde{u}$ is not an equivalence by proving Theorem \ref{intro.counterspin}.  

\subsection{Proof of Theorem \ref{intro.counterspin}}

    Suppose $E$ is a genuine $C_2$-spectrum satisfying conditions (1)-(4) of Theorem \ref{intro.counterspin}. Under the isomorphisms in (1) and (2), the homotopy Mackey functor $\underline{\pi}_*E$ consists of the data
    $$
    \begin{tikzcd}
        \MSpin_*  \arrow[r, bend left = 15, "\text{res}"] & \MSpin^c_* \arrow[l, bend left = 15, "\text{tr}"] \arrow[loop right, "\overline{(\:  \:)}"]
    \end{tikzcd},
    $$
    where $\overline{(\:\:)}$ is the $C_2$-action in (3). Given any class $\alpha \in \MSpin^c_*$, the Mackey functor structure (Definition \ref{Mackey}) implies that the equation,
    \[
    \alpha + \overline{\alpha} = \text{res}(\text{tr}(\alpha)),
    \]
    holds. Then the commutative diagram in (4) implies that for any $\alpha \in \MSpin^c_*$, the oriented bordism class $u^c_*(\alpha + \overline{\alpha})$ must have a spin manifold as a representative. Thus, it is sufficient to show there is some $\alpha \in \MSpin^c_*$ for which $u^c_*(\alpha + \overline{\alpha})$ has no spin representative. Recall the $\hat{A}$-genus (see e.g. \cite{LM}), 
    $$
    \begin{tikzcd}
    {\MSpin_*} \arrow[r, "\hat{A}"] \arrow[d,"u_*"']  & \ZZ \arrow[d,hook] \\
    {\MSO_*} \arrow[r, "\hat{A}"] & \QQ,
    \end{tikzcd}
    $$
    and consider the class
    \[
    \gamma = [\CP^2] \in \MSpin^c_4.
    \]
    We know that $\hat{A}(u^c_*(\gamma)) = -1/8$ (\cite{LM}). By the discussion above, the class $\gamma + \overline{\gamma}$ must be in the image of $\MSpin_4$. Then $\hat{A}(u^c_*(\gamma + \overline{\gamma})) =: n\in \ZZ$, so 
    \[
    \hat{A}(u^c_*(\overline{\gamma})) =  \frac{1}{8} + n.
    \]
    Now consider the class
    \[
    \alpha = \gamma^2 \in \MSpin^c_8.
    \]
    By multiplicativity of both the $C_2$-action and $\hat{A}$, we see that
    \[
    \hat{A}(u^c_*(\alpha + \overline{\alpha})) = \frac{1}{64} + \frac{1}{64} + \frac{2n}{8} + n^2 = \frac{8n+1}{32} + n^2,
    \]
    which is not an integer, since $32$ never divides $8n+1$. Thus, $\alpha + \overline{\alpha}$ is not in the image of $\text{res} \colon \MSpin_* \to \MSpin^c_*$, which is a contradiction. This completes the proof of Theorem \ref{intro.counterspin}.

\begin{remark}
As can be seen from the proof, condition (4) in Theorem \ref{intro.counterspin} can be loosened to just requiring that the map $\text{res} \colon \MSpin_* \to \MSpin^c_*$ preserves the $\hat{A}$-genus. 
\end{remark}

\begin{corollary}\label{cor.counterspin}
If there is an equivalence $\MSpin \xrightarrow{\sim} (\MSpin^c_{\RR})^{C_2}$, then the composite, $$\MSpin \xrightarrow{\sim} (\MSpin^c_{\RR})^{C_2} \to \MSpin^c,$$ does not take a spin manifold to its underlying spin${}^c$ manifold on homotopy groups. It follows that $\tilde{u}$ of Proposition \ref{spin.fixedpoints} is not an equivalence.
\end{corollary}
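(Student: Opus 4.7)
The plan is to deduce this corollary directly from Theorem \ref{intro.counterspin}, applied to $E = \MSpin^c_{\RR}$. Suppose first that $\varphi \colon \MSpin \xrightarrow{\sim} (\MSpin^c_{\RR})^{C_2}$ is any equivalence, and assume for contradiction that the composite $\MSpin \xrightarrow{\varphi} (\MSpin^c_{\RR})^{C_2} \xrightarrow{\text{res}} \MSpin^c$ sends a spin manifold class to its underlying spin${}^c$ bordism class on $\pi_*$. I claim that all four hypotheses of Theorem \ref{intro.counterspin} then hold: condition (1) is Proposition \ref{underlying.spinc}; condition (2) is the existence of $\varphi$; condition (3) holds because $\MSpin^c_{\RR}$ is a commutative orthogonal $C_2$-ring spectrum by Proposition \ref{realspinbordism}, so the induced $C_2$-action on $\pi_*\MSpin^c$ is by ring homomorphisms; and condition (4) is automatic under the contradiction hypothesis, since forgetting the spin${}^c$ structure on the underlying spin${}^c$ manifold of a spin manifold $M$ returns the same oriented manifold $M$, giving $u_* = u^c_* \circ \text{res}$. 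This contradicts Theorem \ref{intro.counterspin}, establishing the first assertion.

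For the second assertion, the key observation is that the map $\tilde{u}$ of Proposition \ref{spin.fixedpoints} was built precisely so that the composite
$$
\MSpin \xrightarrow{\tilde{u}} (\MSpin^c_{\RR})^{C_2} \xrightarrow{\text{res}} \MSpin^c
$$
is the standard Thom-spectrum map $u \colon \MSpin \to \MSpin^c$. Indeed, $\tilde{u}(n)$ is by definition the Thomification of the composite of inclusions $(\U^{\ev}_{\RR^n})^{C_2} \times_{\Spin(n)} \RR^n \hookrightarrow \U^{\ev}_{\RR^n} \times_{\Spin^c(n)} \RR^n$, and the restriction map $(\MSpin^c_{\RR})^{C_2} \to (\MSpin^c_{\RR})^e = \MSpin^c$ is at each level the inclusion of fixed points into the whole space, so the Thomified composite recovers $u(n)$. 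Since $u_*$ sends $[M]$ to its underlying spin${}^c$ bordism class, the first assertion forces $\tilde{u}$ to fail to be an equivalence.

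All the genuinely mathematical work is isolated in Theorem \ref{intro.counterspin}, whose proof uses the $\hat{A}$-genus as an obstruction; this corollary is a bookkeeping exercise matching the abstract conditions on $E$ in the theorem to the particular constructions of $\MSpin^c_{\RR}$ and $\tilde{u}$. The only delicate point, and what I expect to be the main (though still routine) thing to check, is the identification of the restriction map in the Mackey functor $\underline{\pi}_*\MSpin^c_{\RR}$ with post-composition by $\text{res}$ on the level of spectra; this was already set up in the review of fixed-point functors in Section \ref{sec.prelim}, so no further work is needed.
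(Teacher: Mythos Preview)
Your proposal is correct and follows exactly the approach the paper intends: the paper states Corollary \ref{cor.counterspin} without an explicit proof, as an immediate consequence of Theorem \ref{intro.counterspin} applied to $E = \MSpin^c_{\RR}$, and you have spelled out precisely that verification. Your handling of condition (4) via the contradiction hypothesis, and your identification of $\text{res} \circ \tilde{u}$ with the standard map $u$, are both accurate and are exactly what the paper leaves implicit.
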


\begin{remark}\label{geometric.remark}
Recall that, in contrast with (homotopy) fixed points, taking geometric fixed points does commute with taking Thom spectra. Indeed, the geometric fixed points of Real bordism is unoriented bordism, $\Phi^{C_2}\MU_{\RR} \simeq \MO$. In analogy, one might expect an equivalence between $\Phi^{C_2}\MSpin^c_{\RR}$ and $\MSpin$, but Proposition \ref{Real.orientation} (below) implies that such an equivalence would induce a ring map $\MO \to \MSpin$ (by lax monoidality of $\Phi^{C_2}$), which doesn't exist. So $\MSpin \not\simeq \Phi^{C_2}(\MSpin^c_{\RR})$, and similarly, $\MSpin \not\simeq \Phi^{C_2}((\MSpin^c_{\RR})^h)$.
\end{remark}

\section{A Real orientation of Real spin bordism}\label{sec.MR}

In this section, we show that the complex orientation of $\MSpin^c$ can be refined to a Real orientation of $\MSpin^c_{\RR}$.

\begin{proposition}\label{Real.orientation}
There is a map of $C_2$-$E_{\infty}$-ring spectra $\MU_{\RR} \to \MSpin^c_{\RR}$, where $\MU_{\RR}$ denotes the Real bordism spectrum. Thus, $\MSpin^c_{\RR}$ is Real oriented.
\end{proposition}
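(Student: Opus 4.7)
The plan is to mimic the construction of $\MSpin^c_{\RR}$ but using the groups $\U(n)$ at the regular representations $n\rho = \RR^{n,n}$, and then to use the standard inclusion $\U(n) \hookrightarrow \Spin^c(n,n)$ to induce the desired map. First, I would recall (or verify directly from the defining relations) that complex conjugation on $\U(n)$ is compatible with the $C_2$-action on $\Spin^c(n,n)$ coming from the universal property of $\CCl(\RR^{n,n})$: the generators $e_1, \ldots, e_n \in \RR^n$ and $f_1, \ldots, f_n \in \RR^n \sigma$ of $\CCl_{n,n}$ are fixed and negated respectively by the $C_2$-action, so $f_i e_j = ie_j' f_i'$ for appropriate complex generators, giving precisely the complex conjugation $C_2$-action on $\U(n)$ under the inclusion. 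Thus $\U(n) \to \Spin^c(n,n)$ is a Real homomorphism.

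Next, I would build a model of $\MU_{\RR}$ parallel to Section \ref{sec.realspin}: define
\[
\MU_{\RR}(n\rho) := \text{Thom}\bigl((\U^{\ev}_{\RR^{n,n}})^{\U(n)\text{-subspace}} \times_{\U(n)} \RR^{n,n}\bigr),
\]
where $\U(n)$ acts via the Real homomorphism $\U(n) \to \Spin^c(n,n) \to \U^{\ev}_{\RR^{n,n}}$ of Section \ref{sec.realspin}. More precisely, I would run through the same constructions with $\Spin^c(V)$ replaced by $\U(n)$ at $V = n\rho$, obtaining structure maps, unit, and multiplication maps that make $\MU_{\RR}$ into a lax symmetric monoidal functor $\II_{C_2} \to \T_{C_2}$ (by restricting $\II_{C_2}$ to the subcategory generated by regular representations and then left Kan extending, or by working directly with the $C_2$-fixed sub-universe as is standard for Real spectra; the same proofs as in Proposition \ref{realspinbordism} go through verbatim since all Real structures are imported from the $\Spin^c(n,n)$ side). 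The Real homomorphism $\U(n) \hookrightarrow \Spin^c(n,n)$ then induces maps of associated vector bundles, and hence Thom maps
\[
\MU_{\RR}(n\rho) \to \MSpin^c_{\RR}(n\rho),
\]
which are $C_2$-equivariant and strictly compatible with the structure maps, unit, and multiplication. This gives the desired map $\MU_{\RR} \to \MSpin^c_{\RR}$ in $\CAlg(\Sp^{C_2})$.

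The main obstacle, as the introduction warns, is verifying that this model actually represents Atiyah's Real bordism spectrum. This is the content of Lemma \ref{fixedpoints.MU}, so I would defer to it: the key input is that for any Real free $\U(n)$-space $E$, the $C_2$-fixed points of the Real vector bundle $E \times_{\U(n)} \RR^{n,n}$ over $E/\U(n)$ are identified with a real vector bundle of the expected rank over the fixed points, allowing one to match the Thom spaces with the standard model of $\MU_{\RR}$ in which the $n\rho$-th space is the Thom space of the universal Real $\U(n)$-bundle. Once this identification is in hand, the lax symmetric monoidal structure is determined on homotopy categories by the underlying multiplicative data, which agrees with Real bordism by construction. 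The remaining verifications (functoriality, naturality of $\mu$, associativity, unitality, symmetry) are formally identical to the proof of Proposition \ref{realspinbordism} and I would only indicate the needed modifications rather than repeat them.
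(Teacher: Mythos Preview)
Your proposal is correct and follows essentially the same approach as the paper: build $\MU_{\RR}$ as a Real spectrum using $\U^{\ev}_{n\rho}$ as a model for $\mathrm{EU}(n)$, obtain the map to $i^*\MSpin^c_{\RR}$ from the Real homomorphism $\U(n)\to\Spin^c(n\rho)$, and then invoke Lemma~\ref{fixedpoints.MU} to identify this model with the standard $\mathcal{MU}_{\RR}$. The paper makes the last step slightly more explicit by producing a comparison map via the classifying map for the universal Real bundle and checking it is an equivalence on both underlying spaces and $C_2$-fixed points, but this is exactly the argument you are gesturing at.
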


The next two subsections are dedicated to proving Proposition \ref{Real.orientation}.

\subsection{Real spectra}
We begin by briefly recalling another model for $C_2$-spectra which will be convenient for constructing the Real orientation. This model is not needed anywhere else in this paper. For all details, we refer the reader to the appendix of Hill, Hopkins, and Ravenel \cite[Section~B.12]{HHR} or Schwede \cite[Chapter~2]{Schwede2016LECTURESOE}. Since a $C_2$-spectrum is determined by its values on multiples of a regular representation, we can instead \textit{define} $C_2$-spectra by indexing over this smaller sequence of representations. This idea is made precise as follows. 
Define a $\T^{C_2}$-enriched category $\II_{\RR}$ with objects $\RR^n =: n$, and with the $C_2$-mapping space $\II_{\RR}(n, m)$ defined as the Thom space of the difference bundle associated to the complexifications,
\[
\II_{\RR}(n, m) = \text{Thom}(\U(\CC^n, \CC^m); \CC^m- \CC^n),
\]
with $C_2$-action given by complex conjugation.
\begin{definition}
    The category of \textit{Real spectra}, $\Sp_{\RR}$, is the category of $\T^{C_2}$-enriched functors $$\II_{\RR} \to \T_{C_2}.$$
\end{definition}
Given $X \in \Sp_{\RR}$, $k \in \ZZ$, and a subgroup $H \subset C_2$, define the homotopy group
\[
\pi_k^H(X) = \underset{n}{\colim} \: \pi_{k + n\rho}^H(X(n)).
\]
Define a weak equivalence between Real spectra to be a map which induces an isomorphism on homotopy groups. This homotopical category structure can be refined to a model category \cite{hill_hopkins_ravenel_2021}. Define a $\T^{C_2}$-enriched functor $i \colon \II_{\RR} \to \II_{C_2}$ by
\[
n \mapsto n\rho = \RR^n \otimes \rho,
\]
and the natural inclusion on mapping $C_2$-spaces.
\begin{proposition}\cite[Proposition~B.226]{HHR}\label{quillen.real}
There is a Quillen equivalence,
\[
i_! \colon \Sp_{\RR} \rightleftarrows \Sp^{C_2} \colon i^*,
\]
given by left Kan extension along $i$ and precomposition by $i$ respectively.   
\end{proposition}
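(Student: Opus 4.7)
The plan is to follow the standard template for establishing such a Quillen equivalence between a category of spectra indexed on a full subcategory of representations and spectra indexed on all representations: produce the adjunction abstractly, check that it is Quillen, and then verify the equivalence criterion on homotopy groups, using that the regular representations are cofinal for computing $C_2$-equivariant stable homotopy.

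First I would set up the adjunction. Since $\II_{\RR}$ and $\II_{C_2}$ are $\T^{C_2}$-enriched and $i$ is a $\T^{C_2}$-enriched functor, restriction along $i$ automatically admits a left adjoint $i_!$, computed pointwise as the $\T^{C_2}$-enriched left Kan extension
\[
(i_! X)(V) \;=\; \int^{n} \II_{C_2}(n\rho, V) \wedge X(n),
\]
so that $(i_!, i^*)$ is an enriched adjunction before any model structure enters.

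Next I would verify that this is a Quillen adjunction. Equip both sides with the positive stable model structures from \cite{hill_hopkins_ravenel_2021}. Level fibrations and level acyclic fibrations in $\Sp^{C_2}$ are characterized representation-by-representation, so $i^*$ trivially preserves them. For stable weak equivalences, observe that $\pi^H_*$ on each side is computed as a colimit of homotopy groups of $X(n\rho)$ (respectively $X(n)$) as $n$ grows, so $i^*$ preserves weak equivalences between positive fibrant objects, upgrading the adjunction to a Quillen adjunction.

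Finally I would upgrade to a Quillen equivalence by checking derived unit and counit on homotopy groups. The definitions $\pi_k^H(X) = \colim_n \pi_{k+n\rho}^H X(n)$ on $\Sp_\RR$ and $\pi_V^H Y = \colim_n \pi_V^H \Omega^{n\rho} Y(n\rho)$ on $\Sp^{C_2}$ literally agree under $i^*$, so for any fibrant $Y \in \Sp^{C_2}$ the derived counit $L i_! i^* Y \to Y$ is a $\pi^H_*$-isomorphism. Dually, for cofibrant $X \in \Sp_\RR$ the derived unit $X \to i^* R i_! X$ is an isomorphism on $\pi^H_*$ because evaluating $i_! X$ at the regular representations only uses the values $X(n)$ up to equivariant weak equivalence.

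The main obstacle is the homotopical fullness of $i$: one must show that the natural comparison maps $\II_{\RR}(n, m) \to \II_{C_2}(n\rho, m\rho)$ are $C_2$-equivariant weak equivalences, so that Kan extending out and restricting back does not distort the indexing data at regular representations. This rests on the identification $n\rho \cong \CC^n$ as orthogonal $C_2$-representations (with complex conjugation acting on $\CC^n$), under which $C_2$-equivariant real isometries $n\rho \to m\rho$ correspond to complex linear isometries $\CC^n \to \CC^m$, and the Thom-space difference bundles match accordingly. Once this identification is in place, the remainder of the argument is formal manipulation of colimits of homotopy groups, matching \cite[Proposition~B.226]{HHR}, to which I would refer for the detailed verification.
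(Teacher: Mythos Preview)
The paper does not prove this proposition; it merely records the statement with a citation to \cite[Proposition~B.226]{HHR}. So there is nothing in the paper to compare your argument against beyond that reference.

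Your outline in the first three paragraphs is broadly in line with the standard argument, but the final paragraph contains a genuine error. The comparison maps $\II_{\RR}(n,m) \to \II_{C_2}(n\rho, m\rho)$ are \emph{not} $C_2$-equivariant weak equivalences, and $i$ is not homotopically fully faithful. On underlying spaces the source sits over the complex Stiefel manifold $\U(\CC^n,\CC^m)$ while the target sits over the real Stiefel manifold $\O(\RR^{2n},\RR^{2m})$, and these already have different homotopy types. On $C_2$-fixed points the discrepancy is sharper: a $C_2$-equivariant isometry $n\rho \to m\rho$ decomposes into independent isometries on the trivial and sign summands, so $\O(n\rho,m\rho)^{C_2} \cong \O(\RR^n,\RR^m) \times \O(\RR^n,\RR^m)$, whereas $\U(\CC^n,\CC^m)^{C_2} \cong \O(\RR^n,\RR^m)$, and the comparison map is the diagonal. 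Your claimed bijection between $C_2$-equivariant real isometries $n\rho \to m\rho$ and complex linear isometries $\CC^n \to \CC^m$ is therefore false.

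The Quillen equivalence does not rest on any such fully-faithfulness. The operative facts are rather that $i_!$ sends free spectra to free spectra, $i_! F_n A \cong F_{n\rho} A$ (immediate from the adjunctions), and that the free spectra $F_{n\rho}$ already generate the stable model structure on $\Sp^{C_2}$ because every $V \in \II_{C_2}$ embeds in some $n\rho$. Together with the observation (your third paragraph) that $i^*$ creates stable weak equivalences, this is what drives the Quillen equivalence. You should replace the fully-faithfulness claim with this cofinality argument, or simply defer to \cite{HHR} as the paper does.
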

 
The left adjoint is also strongly symmetric monoidal, so we also get a Quillen equivalence between categories of commutative monoids \cite[Proposition~B.248]{HHR},
\[
i_! \colon \CAlg(\Sp_{\RR}) \rightleftarrows \CAlg(\Sp^{C_2}) \colon i^*.
\]
Because of this Quillen equivalence, $C_2$-$E_\infty$-ring maps can be modelled by lax symmetric monoidal natural transformations between lax symmetric monoidal functors out of $\II_{\RR}$.

\subsubsection{$\MU_{\RR}$ and Real orientations}
As is discussed in \cite[Section~B.12]{HHR} and \cite[Chapter~2]{Schwede2016LECTURESOE}, the most natural way to construct $\MU_{\RR}$ as a $C_2$-$E_{\infty}$-ring spectrum is by constructing it as a commutative monoid object in Real spectra and then applying the Quillen equivalence above. This can be done by equipping the spaces $\MU(n)$ with the complex conjugation actions, which defines an object $\mathcal{MU}_{\RR} \in \CAlg(\Sp_{\RR})$. As in \cite[Definition~B.251]{HHR}, $\MU_{\RR}$ is then defined as a commutative orthogonal $C_2$-ring spectrum by applying $i_!$ to a cofibrant replacement of $\mathcal{MU}_{\RR}$ in $\CAlg(\Sp_{\RR})$.

\begin{remark}
For $X \in \CAlg(\Sp^{C_2})$, Real $E_{\infty}$-orientations of $X$ are represented by commutative ring maps $\MU_{\RR} \to X$. By the Quillen equivalence in Proposition \ref{quillen.real}, it is equivalent to consider lax symmetric monoidal natural transformations $\mathcal{MU}_{\RR} \to i^*(X)$ in $\Sp_{\RR}$, where $i^*(X)$ is the Real spectrum obtained from $X$ by restricting to regular representations.
\end{remark}

\subsection{$\MSpin^c_{\RR}$ is Real oriented}

Recall that the homomorphism $\iota \times \det \colon \U(n) \to \text{SO}(2n) \times \U(1)$ lifts to a homomorphism $\varphi(n) \colon \U(n) \to \Spin^c(2n)$. Since $\iota$ is injective, $\varphi(n)$ is also injective. Furthermore, observe that $\varphi(n)$ is actually a Real homomorphism of Real groups $\varphi(n) \colon \U(n) \to \Spin^c(n\rho)$. By Proposition \ref{ESpinc}, the $\U(n)$-space $\U^{\text{even}}_{n\rho}$ is a model for $\text{EU}(n)$ as a Real $\U(n)$-space. The associated bundle $\gamma_{\U(n)} = \U^{\ev}_{n\rho} \times_{\U(n)} \CC^n$ then also inherits a Real structure and the identity map on $\U^{\ev}_{n\rho} \times \CC^n = \U^{\ev}_{n\rho} \times (n\rho)$ descends to a Real map of Real vector bundles $\gamma_{\varphi(n)} \colon \gamma_{\U(n)} \to \gamma_{\Spin^c(n\rho)}$.  

\begin{definition}
Define $\mathcal{MU}_{\RR}'(n)$ to be the Thom space of $\gamma_{\U(n)}$, with its induced Real structure, and let $M\varphi(n) \colon \mathcal{MU}_{\RR}'(n) \to \MSpin^c_{\RR}(n\rho)$ be the Thomification of $\gamma_{\varphi(n)}$. 
\end{definition}

\begin{proposition}
    The Real spaces $\mathcal{MU}'_{\RR}(n)$ (with the usual structure maps) define a lax symmetric monoidal enriched functor $\mathcal{MU}'_{\RR} \colon \II_{\RR} \to \T_{C_2}$, and thus, $\mathcal{MU}'_{\RR} \in \CAlg(\Sp_{\RR})$.
\end{proposition}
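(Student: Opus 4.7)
The plan is to follow the template of Proposition \ref{realspinbordism} very closely, replacing $\II_{C_2}$ with $\II_{\RR}$, the group $\Spin^c(V)$ with $\U(n)$, and $V \in \II_{C_2}$ with $n\rho$. The key structural input is that $\varphi(n)\colon \U(n) \to \Spin^c(n\rho)$ is a Real homomorphism, which means that restricting along $\varphi(n)$ transports all the Real structures established in Section \ref{sec.realspin} to the $\U(n)$-context. In particular, $\U^{\ev}_{n\rho}$ inherits the structure of a Real $\U(n)$-space, and the bundle $\gamma_{\U(n)} = \U^{\ev}_{n\rho} \times_{\U(n)} \CC^n$ is a Real vector bundle over a model of $\mathrm{BU}(n)$.

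First I would define the structure maps and multiplication exactly in parallel to Definition \ref{structure.maps} and the multiplication map in Section \ref{sec.realspin}. Given $n \leq m$ with $k = m-n$, use the $C_2$-equivariant identification of the difference bundle $\CC^m - \CC^n$ over $\U(\CC^n,\CC^m)$ with $\U(\CC^k \oplus \CC^n, \CC^m) \times_{\U(k)} \CC^k$ to define
\[
\widetilde{\sigma}_{n,m}([\iota,x],[A,v]) = [\U_{\iota}(T_{k\rho,n\rho}(\id,A)),\iota(x+v)],
\]
and take $\sigma_{n,m}$ to be its Thomification. The multiplication $\mu_{n,m}$ is the Thomification of the assignment $([A,v],[B,w]) \mapsto [T_{n\rho,m\rho}(A,B),v+w]$, using the standard block inclusion $\U(n)\times\U(m) \hookrightarrow \U(n+m)$, and the unit is the identification $\mathcal{MU}'_{\RR}(0) = S^0$ coming from $\U^{\ev}_{0} = *$.

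Second, I would verify that every map in sight is $C_2$-equivariant. This is where the Real structures enter, and the verifications are the analogues of those for $\MSpin^c_{\RR}$. The three ingredients are: (i) $\varphi(n)$ is Real, so $\U(n) \hookrightarrow \Spin^c(n\rho)$ intertwines complex conjugation with the Clifford-induced action and all associated bundles inherit compatible Real structures; (ii) $C_2$ acts on $\II_{\RR}(n,m)$ by complex conjugation on the Stiefel manifold and on the difference bundle, which is precisely the action used in the proof that $\U_{(\;\;)}$ is $C_2$-equivariant on morphisms; (iii) the tensor product map $T_{V,W}$ is Real by Proposition \ref{real.tensor}. With these in hand, the same string of equalities used to show $\widetilde{\sigma}_{V,W}$ is $C_2$-equivariant in Section \ref{sec.realspin} goes through verbatim, as does the trivial equivariance of $\mu$.

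Third, I would check functoriality, naturality of $\mu$, associativity, unitality, and symmetry. These reduce to the exact same commuting diagrams of bundle maps that appear in the proof of Proposition \ref{realspinbordism}; both legs of each diagram are Thomifications of maps that coincide by naturality of $T$ and associativity of the block inclusions $\U(n)\times \U(m) \hookrightarrow \U(n+m)$. The underlying non-equivariant content is the standard construction of $\MU$ as a commutative orthogonal ring spectrum indexed on complex inner product spaces, so nothing new needs to be verified there. The main ``obstacle'' is purely bookkeeping: keeping track of the fact that every Real structure in play descends from the single Real structure on $\varphi(n)$ and the conjugation action on $\CC$, so that the equivariance arguments in Section \ref{sec.realspin} apply without modification.
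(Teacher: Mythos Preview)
Your proposal is correct and takes essentially the same approach as the paper: the paper's own proof simply states that this is ``a standard Thom spectrum construction, which is analogous to the construction in Section~\ref{sec.realspin},'' and you have spelled out in detail exactly what that analogous construction entails. Your expanded treatment is more than sufficient.
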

\begin{proof}
    This is a standard Thom spectrum construction, which is analogous to the construction in Section \ref{sec.realspin}.
\end{proof}

\begin{proposition}\label{ringmap.MUMSpinc}
        The maps $M\varphi(n)$ define a map of commutative monoids 
        \[
        M\varphi \colon \mathcal{MU}'_{\RR} \to i^*\MSpin^c_{\RR}\]
        in $\Sp_{\RR}$.
\end{proposition}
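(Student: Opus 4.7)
My plan is to verify the three required properties in turn: (i) enriched functoriality (compatibility with structure maps), (ii) $C_2$-equivariance at each level, and (iii) symmetric monoidality (compatibility with units and multiplications). All of these will reduce to properties of the Real homomorphism $\varphi(n) \colon \U(n) \to \Spin^c(n\rho)$ together with the general behavior of the Thom construction applied to maps of principal bundles.

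First, I would check equivariance level-wise. By construction, $\gamma_{\varphi(n)} \colon \gamma_{\U(n)} \to \gamma_{\Spin^c(n\rho)}$ is the map induced on associated bundles by the Real homomorphism $\varphi(n)$ and the identity on $\U^{\ev}_{n\rho} \times \CC^n$; since the Real structures on both Thom spaces are those induced via Proposition \ref{real.props} from the Real structures on $\varphi(n)$, the action on $\U^{\ev}_{n\rho}$, and the action on $n\rho = \CC^n$, the map $M\varphi(n)$ is $C_2$-equivariant. Next, for enriched functoriality, I would check that for each pair $n \leq m$ in $\II_{\RR}$ the square
\[
\begin{tikzcd}[column sep=1.3cm]
\II_{\RR}(n,m) \wedge \mathcal{MU}'_{\RR}(n) \arrow[r,"\id \wedge M\varphi(n)"] \arrow[d,"\sigma^{\mathcal{MU}'_{\RR}}_{n,m}"'] & \II_{\RR}(n,m) \wedge \MSpin^c_{\RR}(n\rho) \arrow[d,"\sigma^{\MSpin^c_{\RR}}_{n\rho,m\rho}"] \\
\mathcal{MU}'_{\RR}(m) \arrow[r,"M\varphi(m)"'] & \MSpin^c_{\RR}(m\rho)
\end{tikzcd}
\]
commutes. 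Both compositions are the Thomification of a map of vector bundles over $\U(\CC^n,\CC^m) \times \BU(n)$, and in both cases the explicit formulas of Definition \ref{structure.maps} together with the fact that $\varphi(m) \circ (\id_{\U(k)} \boxplus \varphi(n)) = \varphi(k) \boxplus \varphi(n)$ (where $k = m - n$ and $\boxplus$ denotes the block-sum inclusions, with $\varphi(0)$ the unique homomorphism) show that the two composites agree on total spaces. Compatibility with composition of morphisms in $\II_{\RR}$ follows similarly by naturality of the unitary tensor maps $T_{V,W}$ used in both models, so the $M\varphi(n)$ assemble into a $\T^{C_2}$-enriched natural transformation.

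For monoidality, I would treat the unit and multiplication separately. The unit comparison is immediate: $\mathcal{MU}'_{\RR}(0) = \MSpin^c_{\RR}(0) = S^0$, and $M\varphi(0) = \id_{S^0}$, so the unit square commutes on the nose. For multiplicativity, I would verify commutativity of
\[
\begin{tikzcd}
\mathcal{MU}'_{\RR}(n) \wedge \mathcal{MU}'_{\RR}(m) \arrow[r,"M\varphi(n) \wedge M\varphi(m)"] \arrow[d,"\mu^{\mathcal{MU}'_{\RR}}_{n,m}"'] & \MSpin^c_{\RR}(n\rho) \wedge \MSpin^c_{\RR}(m\rho) \arrow[d,"\mu^{\MSpin^c_{\RR}}_{n\rho,m\rho}"] \\
\mathcal{MU}'_{\RR}(n+m) \arrow[r,"M\varphi(n+m)"'] & \MSpin^c_{\RR}((n+m)\rho)
\end{tikzcd}
\]
by showing the two evident maps of principal bundles agree before Thomification. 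This reduces to the group-theoretic identity that under the inclusion $\Spin^c(n\rho) \times \Spin^c(m\rho) \hookrightarrow \Spin^c((n+m)\rho)$, the homomorphism $\varphi(n) \times \varphi(m)$ composes to give $\varphi(n+m)$ on the block-diagonal subgroup $\U(n) \times \U(m) \hookrightarrow \U(n+m)$, which is a standard compatibility of the spin${}^c$ lifts coming from the fact that $\iota \times \det$ is block-additive and spin${}^c$ lifts are natural for such block sums. Combined with naturality of the tensor product maps $T_{V,W}$ on unitary groups established in Section \ref{sec.realspin}, this yields commutativity of the multiplication square; the symmetry axiom follows from the same formula since both multiplications are symmetric by construction.

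The main obstacle is not any deep content but rather the bookkeeping of keeping track of the identifications $\RR^n \otimes \rho = \CC^n = n\rho$ and of the Real structures on all of the bundles involved. Once one writes out the total-space formulas for $\widetilde{\sigma}_{V,W}$ and $\mu^{\MSpin^c_{\RR}}_{V,W}$ and compares them with their $\mathcal{MU}'_{\RR}$-analogues, the diagrams reduce to identities already established: naturality of the maps $T$ and $\U_{(-)}$, the block-additivity of $\varphi$, and the Real-ness of $\varphi$ (Proposition \ref{prop.rho} for the underlying orthogonal map). Hence $M\varphi$ is a lax symmetric monoidal $\T^{C_2}$-enriched natural transformation, i.e.\ a map of commutative monoids in $\Sp_{\RR}$.
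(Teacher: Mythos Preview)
Your proposal is correct and follows essentially the same approach as the paper: the paper's proof is a one-line sketch that reduces everything to the commutativity of the two squares of group homomorphisms (stabilization $\U(n)\to\U(n+1)$ versus $\Spin^c(n\rho)\to\Spin^c((n+1)\rho)$, and block-sum $\U(n)\times\U(m)\to\U(n+m)$ versus $\Spin^c(n\rho)\times\Spin^c(m\rho)\to\Spin^c((n+m)\rho)$), and you have simply unpacked this standard Thom-spectrum argument in more detail. One small notational slip: your displayed identity $\varphi(m)\circ(\id_{\U(k)}\boxplus\varphi(n))=\varphi(k)\boxplus\varphi(n)$ is not type-correct as written, but the intended content (that $\varphi$ is compatible with block-sum inclusions) is exactly the paper's right-hand square and is what makes the argument go through.
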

\begin{proof}
This follows from another standard Thom spectrum argument, using the fact that the squares,
\[
\begin{tikzcd}[row sep = 2mm, column sep = 3mm]
\U(n) \ar[rr] \ar[dd] && \Spin^c(n\rho) \ar[dd] & & \U(n)\times \U(m) \ar[rr] \ar[dd] && \Spin^c(n\rho) \times \Spin^c(m\rho) \ar[dd]  \\ &&& {\text{and}} &&& \\
\U(n+1)  \ar[rr] && \Spin^c((n+1)\rho) & & \U(n+m)  \ar[rr] && \Spin^c((n+m)\rho),
\end{tikzcd}
\]
commute.
\end{proof}

The last step we need in order to show that $\MSpin^c_{\RR}$ is Real oriented is to show that $\mathcal{MU}'_{\RR}$ is equivalent to $\mathcal{MU}_{\RR}$. By construction, we know that the space $\mathcal{MU}_{\RR}'(n)$ is equivalent to $\MU(n)$. However, it is not obvious that the $C_2$-action we constructed on $\mathcal{MU}_{\RR}'(n)$ is equivalent to the complex conjugation action on $\MU(n)$. The rest of this section is devoted to showing that our construction does model $\MU$ with complex conjugation. 

\begin{lemma}\label{fixedpoints.MU}
    If $X$ and $Y$ are (right and left, respectively) Real $\U(n)$-spaces, with $\U(n)$ acting freely on $X$, then the $C_2$-fixed points of $X \times_{\U(n)} Y$ is given by the space
    \[
    (X \times_{\U(n)} Y)^{C_2} \cong X^{C_2} \times_{\O(n)} Y^{C_2}.
    \]
\end{lemma}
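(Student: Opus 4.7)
The plan is to construct a natural map
\[
f \colon X^{C_2} \times_{\O(n)} Y^{C_2} \to (X \times_{\U(n)} Y)^{C_2},
\]
defined on representatives by $[x,y] \mapsto [x,y]$, and show it is a homeomorphism. This is well-defined and continuous because $\O(n) = \U(n)^{C_2}$ and the inclusion $X^{C_2} \times Y^{C_2} \hookrightarrow X \times Y$ is $C_2$-invariant and $\O(n)$-equivariant with image in the preimage of the fixed points.

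For injectivity, suppose $[x_1,y_1] = [x_2,y_2]$ in $X \times_{\U(n)} Y$ with all four entries fixed. By freeness of the $\U(n)$-action on $X$, there is a unique $g \in \U(n)$ with $x_2 = x_1 g$ and $y_2 = g^{-1}y_1$. Applying complex conjugation gives $x_2 = \overline{x_2} = x_1 \overline{g}$, and freeness forces $g = \overline{g}$, so $g \in \O(n)$ and the two classes agree already in $X^{C_2} \times_{\O(n)} Y^{C_2}$.

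For surjectivity, let $[x,y] \in (X \times_{\U(n)} Y)^{C_2}$. Freeness produces a unique $g \in \U(n)$ with $\overline{x} = xg$ and $\overline{y} = g^{-1}y$; applying conjugation to $\overline{x} = xg$ yields $x = xg\overline{g}$, so $g\overline{g} = 1$. Using $g^{-1} = g^*$ for unitary $g$, this is equivalent to $g$ being a symmetric matrix. The main input is then a Hilbert 90 type statement for $\U(n)$ with complex conjugation: every symmetric unitary matrix can be written as $g = h\overline{h}^{-1}$ for some $h \in \U(n)$, which follows from Takagi's factorization $g = hh^T$ together with the identity $\overline{h}^{-1} = h^T$ for unitary $h$. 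Then $x' := xh$ and $y' := h^{-1}y$ represent the same class as $(x,y)$, and a direct check using $g\overline{h} = h$ gives $\overline{x'} = x'$ and $\overline{y'} = y'$, so $[x,y] = f([x',y'])$.

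The main obstacle I anticipate is checking continuity of the inverse of $f$, since the choice of $h$ in Takagi's factorization is not canonical. The plan is to exploit local triviality of the principal $\U(n)$-bundle $X \to X/\U(n)$ together with local continuous sections of the surjective smooth submersion $\U(n) \to \{g \in \U(n) : g\overline{g} = 1\}$, $h \mapsto h\overline{h}^{-1}$; patching these sections yields local $C_2$-equivariant trivializations of $X \times_{\U(n)} Y \to X/\U(n)$ identifying the fixed points with the $\O(n)$-balanced product as claimed.
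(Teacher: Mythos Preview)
Your bijectivity argument is the paper's: both build the obvious map, prove injectivity via freeness, and for surjectivity observe that the cocycle is a symmetric unitary and factor it. The paper takes a symmetric unitary square root $S^2=U$, $S^T=S$ (their normalization is $\overline{x}=xU^*$, so $U=g^{-1}$) and sets $(w,z)=(xS^*,Sy)$; since $S^*(S^*)^T=(S^2)^*=U^*=g$, their $S^*$ is precisely your Takagi factor $h$.

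Where you diverge is continuity of the inverse, and here you are working harder than necessary. The paper asserts in one line that both source and target carry the quotient topology from $X^{C_2}\times Y^{C_2}$. The reason, left implicit, is compactness of $\U(n)$: the quotient $X\times Y\to X\times_{\U(n)}Y$ is a closed map (the $\U(n)$-saturation of a closed set is closed), and composing with the closed inclusion $X^{C_2}\times Y^{C_2}\hookrightarrow X\times Y$ gives a closed map $p$ into $X\times_{\U(n)}Y$ whose image is exactly $(X\times_{\U(n)}Y)^{C_2}$ by your surjectivity step and whose fibers are the $\O(n)$-orbits by your injectivity step. A closed surjection is a quotient map, so $\phi$ is a homeomorphism. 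Your local-section plan would also succeed in the cases of interest, but it is more elaborate and imports the extra hypothesis that $X\to X/\U(n)$ is locally trivial, which the compactness argument avoids entirely.
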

\begin{proof}
    Define the map
    \[
    \phi \colon X^{C_2} \times_{\O(n)} Y^{C_2} \to (X \times_{\U(n)} Y)^{C_2},
    \]
    to be the one induced by the inclusions $X^{C_2} \hookrightarrow X$ and $Y^{C_2} \hookrightarrow Y$. That is, we have that  $\phi([x,y]_{\O(n)}) = [x,y]_{\U(n)}$. Here the notation $[x,y]_G$ is referring to the class represented by $(x,y)$ in $X \times_G Y$.
    First, we show injectivity. Suppose $\phi([x_0,y_0]_{\O(n)}) = \phi([x_1, y_1]_{\O(n)})$. Then there exists $U \in \U(n)$, such that
    \[
    (x_0, y_0) = (x_1 U^* , U y_1 ).
    \]
    But since $x_0, x_1$ are fixed by $C_2$ we get
    \[
    x_1 U^* = x_0 = \overline{x_0},  = \overline{x_1} \overline{U}^{*} = x_1 \overline{U}^{*}.
    \]
    Thus, since $G$ acts freely on $X$, we must have that $U = \overline{U}$. So $U \in \O(n)$ and
    \[
    [x_0,y_0]_{\O(n)} = [x_1, y_1]_{\O(n)}.
    \]
    Next, we show surjectivity. Let $[x, y]_{\U(n)} \in (X \times_{\U(n)} Y)^{C_2}$. Then there exists $U \in \U(n)$ such that
    \[
    (\overline{x}, \overline{y}) = (xU^{*},Uy).
    \]
    Observe that from $\overline{x} = x U^{*}$,
    we obtain
    \[
    x = \overline{x U^{*}} = \overline{x} \overline{U^{*}} = \overline{x}U^T,
    \]
    by taking conjugates. However, if we instead acted by $U$ on both sides of $\overline{x} = x U^{*}$, we would obtain $\overline{x} U = x.$
    By freeness of the action of $\U(n)$ on $X$, we must have that $U = U^T$, so $U$ is symmetric. Then we may choose an $S \in \U(n)$ such that $S^2 = U$ and $S^T = S$. This can be done by diagonalizing $U$ and choosing a square root for each eigenvalue of $U$ \cite[Theorem~5.12]{Zhang1999}.
    Now consider the element
    \[
    (w, z) = (x S^{*}, Sy).
    \]
    By construction, we have that
    \[
    [w, z]_{U(n)} = [x, y]_{U(n)}.
    \]
    Observe that both $w$ and $z$ are fixed points, since
    \[
    \overline{w} = \overline{x S^{*}} = \overline{x} \overline{S^{*}} = \overline{x} S = x U^{*} S = x S^* = w,
    \]
    and 
    \[
    \overline{z} = \overline{Sy} =  \overline{S}\overline{y} = S^{*}\overline{y}  = S^{*} U y = Sy = z.
    \]
    It follows that $[x,y]_{U(n)}$ is in the image of $\phi$.
    Continuity of the inverse is then immediate from the fact that open sets in both the source and the target can be represented by the set of orbits of an open set in $X^{C_2} \times Y ^{C_2}$.
\end{proof}

\begin{remark}
Lemma \ref{fixedpoints.MU} depends on the ability to choose square roots in the group $\U(n)$ that interact appropriately with the Real structures, and is thus not true for an arbitrary Real group $G$. In particular, the proof does not apply to $G = \Spin^c(n)$.
\end{remark}

\begin{proposition}
    There is a weak equivalence in $\CAlg(\Sp_{\RR})$,
    \[
        \mathcal{MU}'_{\RR} \simeq \mathcal{MU}_{\RR}.
    \]
\end{proposition}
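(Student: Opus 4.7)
The plan is to exhibit a zigzag of weak equivalences in $\CAlg(\Sp_{\RR})$ between $\mathcal{MU}_{\RR}$ and $\mathcal{MU}'_{\RR}$ by constructing an intermediate Real Thom spectrum from a product of two models of $\mathrm{EU}(n)$. Let $\mathrm{EU}(n)_{\mathrm{std}}$ denote the standard Real model (with complex conjugation Real structure) used by HHR to define $\mathcal{MU}_{\RR}(n)=\mathrm{Thom}(\mathrm{EU}(n)_{\mathrm{std}}\times_{\U(n)}\CC^n)$. Form the Real $\U(n)$-space $P(n):=\mathrm{EU}(n)_{\mathrm{std}}\times \U^{\ev}_{n\rho}$ with the diagonal action and diagonal Real structure; this is a Real free $\U(n)$-space with contractible underlying space, so another Real model of $\mathrm{EU}(n)$. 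Define
$$\mathcal{MU}''_{\RR}(n):=\mathrm{Thom}\bigl(P(n)\times_{\U(n)}\CC^n\bigr),$$
and assemble these into an object $\mathcal{MU}''_{\RR}\in\CAlg(\Sp_{\RR})$ exactly as in Proposition \ref{realspinbordism}. The two Real $\U(n)$-equivariant projections $P(n)\to \mathrm{EU}(n)_{\mathrm{std}}$ and $P(n)\to \U^{\ev}_{n\rho}$ then yield, by naturality of the Thom construction and compatibility with block-sum multiplications, a zigzag of commutative monoid maps
$$\mathcal{MU}_{\RR}\xleftarrow{p_1}\mathcal{MU}''_{\RR}\xrightarrow{p_2}\mathcal{MU}'_{\RR}.$$

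Next, I would verify that $p_1$ and $p_2$ are weak equivalences of Real spectra. By the colimit formula defining $\underline{\pi}^{H}_{*}$, it suffices to check that at each level both the underlying $C_2$-equivariant map and the map on $C_2$-fixed points are weak equivalences of spaces. On underlying spaces, each projection $P(n)\to\mathrm{EU}(n)_{\mathrm{std}}$ and $P(n)\to \U^{\ev}_{n\rho}$ is a $\U(n)$-equivariant homotopy equivalence, since the omitted factor is $\U(n)$-equivariantly contractible; Thomifying preserves this. On $C_2$-fixed points, Lemma \ref{fixedpoints.MU} yields
$$\bigl(\mathcal{MU}'_{\RR}(n)\bigr)^{C_2}\cong \mathrm{Thom}\bigl((\U^{\ev}_{n\rho})^{C_2}\times_{\O(n)}\RR^n\bigr),$$
and analogous formulas for $\bigl(\mathcal{MU}_{\RR}(n)\bigr)^{C_2}$ and $\bigl(\mathcal{MU}''_{\RR}(n)\bigr)^{C_2}$. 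By Lemma \ref{lem.fixedunitary}, $(\U^{\ev}_{n\rho})^{C_2}$ is contractible; likewise, $(\mathrm{EU}(n)_{\mathrm{std}})^{C_2}$ is a contractible free $\O(n)$-space, and the freeness of the $\O(n)$-actions is inherited from the $\U(n)$-actions. Hence the projections on fixed points are morphisms between three equivalent models of $\MO(n)$, all induced by $\O(n)$-equivariant homotopy equivalences of contractible free $\O(n)$-spaces.

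The main expected obstacle is the bookkeeping required to promote $\mathcal{MU}''_{\RR}$ to a lax symmetric monoidal functor $\II_{\RR}\to\T_{C_2}$ and to check that $p_1,p_2$ are genuine morphisms in $\CAlg(\Sp_{\RR})$; this is a direct analogue of the verifications in Proposition \ref{realspinbordism} and Proposition \ref{ringmap.MUMSpinc}, since the multiplications on all three spectra arise from block-sum maps $\U(n)\times\U(m)\to\U(n+m)$ (and their Real lifts) that are manifestly compatible with the two projections. Granting this, $p_1$ and $p_2$ give the desired equivalence $\mathcal{MU}'_{\RR}\simeq\mathcal{MU}_{\RR}$ in $\CAlg(\Sp_{\RR})$, completing the proof.
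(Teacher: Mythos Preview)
Your proposal is correct and takes a genuinely different route from the paper. The paper constructs a \emph{single} map $\theta\colon \mathcal{MU}'_{\RR}\to\mathcal{MU}_{\RR}$ rather than a zigzag: it invokes the universal property of the Real universal bundle (citing Edelson) to classify the Real bundle $\U^{\ev}_{n\rho}\times_{\U(n)}\CC^n$ by a $C_2$-map to the standard model, and then Thomifies the resulting pullback square. The verification that this map is a levelwise $C_2$-equivalence is essentially identical to yours, appealing to Lemma~\ref{fixedpoints.MU} and Lemma~\ref{lem.fixedunitary} to identify fixed points with models of $\MO(n)$. Your product-space zigzag has the advantage of avoiding any appeal to a specific universal Real bundle; it only uses that both total spaces are contractible with free $\U(n)$-action (and likewise on $C_2$-fixed points with free $\O(n)$-action). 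The paper's approach is marginally cleaner in that it yields an actual map in $\CAlg(\Sp_{\RR})$ rather than a zigzag, at the cost of importing the universal property. One small terminological slip: a contractible free $\U(n)$-space is not ``$\U(n)$-equivariantly contractible'' (its $H$-fixed points are empty for $H\neq 1$); what you actually use---and what suffices---is that the omitted factor is nonequivariantly contractible, so each projection is a bundle with contractible fiber and hence a weak equivalence after passing to associated bundles and Thom spaces.
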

\begin{proof}
    By construction, we know that we have a nonequivariant homotopy equivalence
    \[
    \mathcal{MU}'_{\RR}(n) \simeq \MU(n) = \mathcal{MU}_{\RR}(n),
    \]
     but we need to verify that the $C_2$-action that we have defined on $\mathcal{MU}'_{\RR}(n)$ is equivalent to complex conjugation.
     Recall that complex conjugation on the universal complex vector bundle $\gamma_n = \text{EU}(n) \times_{\U(n)} \CC^n $ over $\BU(n)$ defines a universal $n$-dimensional Real vector bundle \cite{Edelson}. Then the Real bundle $\U^{\ev}_{n\rho} \times_{\U(n)} \CC^n \to \U^{\ev}_{n{\rho}}/\U(n)$, induces a map of Real vector bundles which fits into a pullback diagram of $C_2$-spaces
     \begin{equation}\label{pullback.complexvb}
\begin{tikzcd}
\U^{\ev}_{n\rho} \times_{\U(n)} \CC^n \arrow[r] \arrow[d] & \text{EU}(n) \times_{\U(n)} \CC^n \arrow[d] \\
\U^{\ev}_{n{\rho}}/\U(n) \arrow[r] & \BU(n).
\end{tikzcd}
\end{equation}
We define
\[
\theta \colon \mathcal{MU}'_{\RR} \to \mathcal{MU}_{\RR},
\]
by letting
\[
\theta_n \colon \mathcal{MU}'_{\RR}(n) \to \mathcal{MU}_{\RR}(n),
\]
be the $C_2$-equivariant map given by Thomification of the map of total spaces, 
\[
\beta_n \colon \U^{\ev}_{n\rho} \times_{\U(n)} \CC^n \to \text{EU}(n) \times_{\U(n)} \CC^n,
\]
in our map of Real vector bundles.
We will verify that $\theta_n$ is an equivalence of $C_2$-spaces. To do this, we first observe that $\theta_n$ is an equivalence on underlying spaces. Since $\beta_n$ fits into the pullback diagram of complex vector bundles \eqref{pullback.complexvb},
the map on base spaces must be a homotopy equivalence, since this pullback square classifies that $\U^{\ev}_{n\rho} \times_{\U(n)} \CC^n$ is a model for the universal $n$-dimensional complex vector bundle. Thus, the map on Thom spaces must also be an equivalence. On fixed points, we use Lemma \ref{fixedpoints.MU} (once on the total space and once on the base space) to observe that the real vector bundle
\[
(\U^{\ev}_{n\rho} \times_{\U(n)} \CC^n)^{C_2} \to (\U^{\ev}_{n{\rho}}/\U(n))^{C_2},
\]
can be identified with
\[
(\U^{\ev}_{n\rho})^{C_2} \times_{\O(n)} \RR^n \to (\U^{\ev}_{n{\rho}})^{C_2}/\O(n).
\]
Then using the fact that taking fixed points preserves pullback squares, we see that $\beta_n^{C_2}$ fits into a pullback square giving a map of real vector bundles 
\[\begin{tikzcd}
(\U^{\ev}_{n\rho})^{C_2} \times_{\O(n)} \RR^n \arrow[r] \arrow[d] & \text{EO}(n) \times_{\O(n)} \RR^n \arrow[d] \\
(\U^{\ev}_{n{\rho}})^{C_2}/\O(n) \arrow[r] & \text{BO}(n).
\end{tikzcd}\]
By Lemma \ref{lem.fixedunitary}, $(\U^{\ev}_{n\rho})^{C_2}$ is a contractible space with a free $\O(n)$-action, so this pullback square classifies the universal real vector bundle on the model of $\text{BO}(n)$ given by $(\U^{\ev}_{n{\rho}})^{C_2}/\O(n)$. Thus, the map on Thom spaces,  $\theta_n^{C_2}$, is an equivalence. We conclude that $\theta_n$ is an equivalence of $C_2$-spaces for every $n$. Monoidality and naturality of this construction are clear, so the maps $\theta_n$ give an equivalence
\[
\theta \colon \mathcal{MU}'_{\RR} \simeq \mathcal{MU}_{\RR}
\]
in $\CAlg(\Sp_{\RR})$.
\end{proof}

This completes the proof of Proposition \ref{Real.orientation}.

\section{A Real spin orientation of Real $\mathrm{K}$-theory}\label{sec.KR}

In this section, we refine the Atiyah--Bott--Shapiro spin${}^c$ orientation of $\KU$ (\cite{ABS}, \cite{Joachim}) to a $C_2$-$E_{\infty}$-map from $\MSpin^c_{\RR}$ to Atiyah's Real K-theory. By Proposition \ref{spin.fixedpoints}, this recovers the Atiyah--Bott--Shapiro spin orientation of KO as an $E_{\infty}$-map as well. We start by adapting the construction of KU in \cite{Joachim} (for trivial $G$) to a model for Real K-theory as a $C_2$-$E_{\infty}$-ring spectrum, $\KU_{\RR}$. Due to the adaptability of Kasparov's KK-theory to the setting of Real $C^*$-algebras, our modifications consist of equipping all relevant $C^*$-algebras with appropriate Real structures. Following \cite{Joachim}, we implicitly use the formalism of $\mathscr{I}_{C_2}$-FSP's \cite{K-theory/0408} for ease of comparison of our constructions.  

\vspace{3mm}

Let $\s = C_0(\RR)$ be the $\Z2$-graded $C^*$-algebra of continuous functions $\RR \to \CC$ vanishing at infinity, graded by even and odd functions. We equip $\s$ with a Real structure defined by complex conjugation on values. 

\begin{proposition}
The $C^*$-algebra $\s$ is generated by the functions $a,b \in \s$ defined by $$a(t) = \dfrac{1}{1+t^2} \;\;\; \text{ and }\;\;\; b(t) = \dfrac{t}{1+t^2}.$$ 
\end{proposition}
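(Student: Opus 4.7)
The plan is to apply the locally compact Hausdorff version of the Stone--Weierstrass theorem: if $A \subset C_0(X)$ is a self-adjoint $*$-subalgebra that separates the points of $X$ and vanishes at no point of $X$, then $A$ is dense in $C_0(X)$. Here $X = \RR$ and we take $A$ to be the $*$-subalgebra generated by $a$ and $b$; showing that its closure is all of $\s$ proves that the $C^*$-algebra generated by $a$ and $b$ is $\s$.

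First I would note that $a,b \in \s$, since both are continuous and tend to $0$ as $|t| \to \infty$. Both functions are real-valued, so in particular $a^* = a$ and $b^* = b$ in the $C^*$-algebra $\s$, which means $A$ is automatically self-adjoint. Also $a(t) = 1/(1+t^2) > 0$ for every $t \in \RR$, so $A$ vanishes nowhere.

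The only nontrivial check is separation of points. The relation $a(t) = 1/(1+t^2)$ recovers $t^2$ from $a(t)$ via $t^2 = 1/a(t) - 1$, so $a(s) = a(t)$ forces $s = \pm t$. In the case $s = -t$, we have $b(s) = -b(t)$, and this differs from $b(t)$ unless $t = 0$; but then $s = -t = 0 = t$ already. Hence any pair of distinct points in $\RR$ is separated by $a$ or $b$.

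With these three conditions in hand, Stone--Weierstrass gives that $A$ is dense in $\s$, and its $C^*$-closure is therefore $\s$. There is no real obstacle in this proof; the only mildly subtle point is that $b$ alone fails to separate points (since $b(t) = b(1/t)$ would be a distraction, though the actual failure is that $b$ is not injective on $\RR$), so one must use $a$ and $b$ in tandem to witness separation.
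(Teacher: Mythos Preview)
Your Stone--Weierstrass argument is correct and complete. The paper states this proposition without proof, so there is no approach to compare against; your argument is the standard one and would fit naturally in its place.
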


Given a $\Z2$-graded Hilbert space $\HH$, let $\KK(\HH)$ denote the $\Z2$-graded $C^*$-algebra of compact operators on $\HH$. For $V \in \II_{C_2}$, let $\KK_V = \KK(L^2(V))$. Then Remark \ref{real.hom} gives $\KK_V$ a Real structure, and Proposition \ref{real.tensor} gives $\CCl(V)\otimes \KK_V$ a Real structure. If $A,B$ are any Real $\Z2$-graded $C^*$-algebras, then Remark \ref{real.hom} also makes the space of even $*$-homomorphisms $C^*_{\text{gr}}(A,B)$ into a Real space.

\begin{definition}
For $V \in \II_{C_2}$, define the Real space $$\KU_{\RR}(V) := C^*_{\text{gr}}(\s,\CCl(V) \otimes \KK_V).$$ 
\end{definition}

\begin{remark}\label{remark.iso}
For Hilbert spaces $\HH_1,\HH_2$, there is a canonical isomorphism 
\begin{equation}\label{compact.tensor}
\KK(\HH_1)\otimes \KK(\HH_2) \cong \KK(\HH_1 \otimes \HH_2).
\end{equation}
Combining this with the isomorphism $\CCl(V) \xrightarrow{\sim} \KK(\CCl(V))$ given by Clifford multiplication, yields a canonical isomorphism
\begin{equation}\label{eqn.iso}
\KK(\CCl(V)\otimes L^2(V)) \cong \CCl(V) \otimes \KK_V. 
\end{equation}
Each of the isomorphisms above respect the Real structures obtained from the constructions in Section \ref{sec.Real}, so the isomorphism in \eqref{eqn.iso} respects the Real structures as well. Hence, we may identify the Real spaces $\KU_{\RR}(V) = C^*_{\gr}(\s,\KK(\CCl(V)\otimes L^2(V)))$.  
\end{remark}

To construct the multiplication maps $\mu^{\KU_{\RR}}_{V,W}$, notice that there is a coassociative and cocommutative comultiplication on $\s$,
$$
\Delta \colon \s = C_0(\RR) \xrightarrow{\Tilde{\Delta}} C_0(\RR^2) \xrightarrow{m^{-1}} C_0(\RR) \otimes C_0(\RR) = \s \otimes \s,
$$
where $\Tilde{\Delta}(a)(x,y) = \dfrac{1}{1+x^2+y^2}$, $\Tilde{\Delta}(b)(x,y) = \dfrac{x+y}{1+x^2+y^2}$, and $m(f \otimes g)(x,y) = f(x)g(y)$. If we endow $C_0(\RR^2)$ with the Real structure given by complex conjugation on values, then it is clear that $\Delta$ is Real. 

\begin{definition}\label{def.star}
Given Real $\Z2$-graded $C^*$-algebras $A, B$, define the Real map
$$
\star \colon C^*_{\gr}(\s,A) \wedge  C^*_{\gr}(\s,B) \to  C^*_{\gr}(\s,A \otimes B) 
$$
to be the composite of Real maps
$$
C^*_{\gr}(\s,A) \wedge  C^*_{\gr}(\s,B) \xrightarrow{\otimes} C^*_{\gr}(\s \otimes \s,A \otimes B) \xrightarrow{\Delta^*} C^*_{\gr}(\s,A \otimes B). 
$$
\end{definition}

\begin{definition} Let $V, W \in \II_{C_2}$. Define the Real map
\[
\mu^{\KU_{\RR}}_{V,W} \colon \KU_{\RR}(V) \wedge \KU_{\RR}(W) \to \KU_{\RR}(V\oplus W),
\]
via the $\star$ in Definition \ref{def.star} with $A = \KK(\CCl(V)\otimes L^2(V))$ and $B = \KK(\CCl(W)\otimes L^2(W))$, 
where we use the identifications in \eqref{eqn.iso}, \eqref{compact.tensor}, and \eqref{tensor.clifford}.
\end{definition}

Next, we will define the unit, $\eta^{\KU_{\RR}}$, using functional calculus. 

\begin{definition}
For $V \in \II_{C_2}$, define $\fc_V \colon V \to C^*_{\gr}(\s,\CCl(V))$, as follows: for $v \in V$, 
$$
\fc_V(v)(a) = \dfrac{1}{1+|v|^2} \;\;\; \text{ and }\;\;\; \fc_V(v)(b) = \dfrac{v}{1+|v|^2}.
$$ 
\end{definition}

\begin{proposition}
The map $\fc_V$ is $C_2$-equivariant. 
\end{proposition}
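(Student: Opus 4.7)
The plan is to check the identity $\fc_V(\overline{v}) = \overline{\fc_V(v)}$ as $*$-homomorphisms $\s \to \CCl(V)$ by evaluating both sides on the generators $a, b \in \s$. Since $\s$ is generated as a $C^*$-algebra by $a$ and $b$, and both sides are $*$-homomorphisms, agreement on these generators will be enough.

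First, I would unpack the Real structures involved. By Remark \ref{real.hom}, the action on $C^*_{\gr}(\s, \CCl(V))$ is by conjugation, so for any $\phi \in C^*_{\gr}(\s, \CCl(V))$ and $s \in \s$ we have $\overline{\phi}(s) = \overline{\phi(\overline{s})}$. The Real structure on $\s$ is complex conjugation on values, so the real-valued generators satisfy $\overline{a} = a$ and $\overline{b} = b$. The Real structure on $\CCl(V)$ is the one induced by the $C_2$-action on $V$ (with $\CC$ given its conjugation action), so elements of $V \subset \CCl(V)$ transform as $v \mapsto \overline{v}$ while real scalars are fixed. Finally, since $V \in \II_{C_2}$ is an orthogonal $C_2$-representation, the inner product is $C_2$-invariant, so $|\overline{v}| = |v|$.

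With these observations in hand, the verification on generators is essentially an unwinding. On $a$, one computes
\[
\overline{\fc_V(v)}(a) = \overline{\fc_V(v)(\overline{a})} = \overline{\fc_V(v)(a)} = \overline{\tfrac{1}{1+|v|^2}} = \tfrac{1}{1+|v|^2} = \tfrac{1}{1+|\overline{v}|^2} = \fc_V(\overline{v})(a),
\]
using that $1/(1+|v|^2)$ is a real scalar fixed by the Real structure on $\CCl(V)$ and that $|\overline{v}| = |v|$. On $b$, similarly,
\[
\overline{\fc_V(v)}(b) = \overline{\fc_V(v)(b)} = \overline{\tfrac{v}{1+|v|^2}} = \tfrac{\overline{v}}{1+|\overline{v}|^2} = \fc_V(\overline{v})(b),
\]
since the real scalar $1/(1+|v|^2)$ is fixed while $v \in V \subset \CCl(V)$ transforms via the Real structure on $V$.

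The only real subtlety is keeping the various Real structures straight, in particular distinguishing the ``values in $\CC$'' Real structure on $\s$ from the conjugation Real structure on the mapping space, and noting that the $C_2$-action on $\CCl(V)$ fixes the real scalars $1 + |v|^2$. Once these are isolated, the argument is essentially immediate and no further analytical input is required beyond the fact that $\s$ is generated by $a$ and $b$.
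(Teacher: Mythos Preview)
Your proposal is correct and follows essentially the same approach as the paper: both verify equivariance by evaluating $\overline{\fc_V(v)}$ and $\fc_V(\overline{v})$ on the generators $a,b\in\s$, using that $a,b$ are real-valued (hence fixed by the Real structure on $\s$), that the $C_2$-action on $V$ is orthogonal so $|\overline{v}|=|v|$, and that real scalars in $\CCl(V)$ are fixed. Your write-up is slightly more explicit about isolating these ingredients, but the argument is the same.
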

\begin{proof}
On generators, $$\overline{\fc_V(v)}(a) = \overline{\fc_V(v)(\overline{a})} = \overline{\fc_V(v)(a)} = \fc_V(v)(a) = \fc_V(\overline{v})(a),$$
and 
$$
\overline{\fc_V(v)}(b) = \overline{\fc_V(v)(\overline{b})} = \overline{\fc_V(v)(b)} = \dfrac{\overline{v}}{1+|v|^2} = \dfrac{\overline{v}}{1+|\overline{v}|^2} = \fc_V(\overline{v})(b). 
$$
\end{proof}

Since $\fc_V(v) \to 0$ as $v \to \infty$, we get an induced $C_2$-equivariant map 
$$
\beta_V \colon S^V \to C^*_{\gr}(\s,\CCl(V))
$$
of Real spaces. Let $P_{e^{-|\cdot|^2}} \in \KK_V$ be the projection on the Gaussian $e^{-|\cdot |^2} \in L^2(V)$. 

\begin{definition}
Define $p_V \in C^*_{\gr}(\s,\KK_V)$ by
$$
p_V(f) = f(0)P_{e^{-|\cdot|^2}},
$$
i.e. the composite
$$
\s \xrightarrow{\text{eval}_0} \CC \xrightarrow{P_{e^{-|\cdot|^2}} } \KK_V.
$$
\end{definition}

Note that $p_V$ is invariant with respect to the Real structure of $C^*_{\gr}(\s,\KK_V)$. Then $\beta_V$ and $p_V$ can be combined to obtain a Real map
$$
\eta^{\KU_{\RR}}_V \colon S^V \to \KU_{\RR}(V),
$$
defined as the composite 
$$
S^V \cong S^V \wedge S^0 \xrightarrow{\beta_V \wedge p_V} C^*_{\gr}(\s,\CCl(V)) \wedge C^*_{\gr}(\s,\KK_V) \xrightarrow{\star} C^*_{\gr}(\s,\CCl(V) \otimes \KK_V) = \KU_{\RR}(V).
$$

\begin{proposition}
The data $(\KU_{\RR}(V),\mu^{\KU_{\RR}},\eta^{\KU_{\RR}})$ define a commutative orthogonal $C_2$-ring spectrum, $\KU_{\RR}$, with underlying spectrum KU.
\end{proposition}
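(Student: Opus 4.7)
The plan is to mirror Joachim's construction of $\KU$ from \cite{Joachim} essentially verbatim, checking that each map used is $C_2$-equivariant with respect to the Real structures defined in this section. The non-equivariant content is already handled by \cite{Joachim}, so the only substantial new work is the equivariance verification.

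I would first adopt the $\mathscr{I}_{C_2}$-FSP formalism used in \cite{K-theory/0408} and \cite{Joachim}, whereby a commutative orthogonal $C_2$-ring spectrum is specified by its multiplication and unit data alone, with structure maps $\II_{C_2}(V,W) \wedge \KU_{\RR}(V) \to \KU_{\RR}(W)$ recovered canonically from $\mu^{\KU_{\RR}}$ and $\eta^{\KU_{\RR}}$. In this setting, naturality, associativity, unitality, and commutativity of $\mu^{\KU_{\RR}}$ all reduce to coassociativity and cocommutativity of the comultiplication $\Delta \colon \s \to \s \otimes \s$ combined with the natural identifications $L^2(V) \otimes L^2(W) \cong L^2(V \oplus W)$, $\CCl(V) \otimes \CCl(W) \cong \CCl(V \oplus W)$, and $\KK(\HH_1) \otimes \KK(\HH_2) \cong \KK(\HH_1 \otimes \HH_2)$; the unit axiom additionally uses the fact that the Gaussian $e^{-|\cdot|^2}$ factors as a tensor product of Gaussians under the Hilbert space identification. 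Each of these checks is already carried out in \cite{Joachim}.

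For equivariance, $\mu^{\KU_{\RR}}$ is Real because $\Delta$ is Real (noted in the excerpt) and tensor products of Real $*$-homomorphisms are Real by Proposition \ref{real.tensor}, while $\eta^{\KU_{\RR}}$ is Real because $\beta_V$ is Real (verified above) and the projection $p_V$ is $C_2$-invariant, the latter holding because the Gaussian is a real-valued, isometry-invariant function. To identify the underlying spectrum with $\KU$, I would restrict to trivial $C_2$-representations in $\II_{C_2}$, on which the Real structures collapse and the formulas for $\KU_{\RR}(V)$, $\mu^{\KU_{\RR}}$, and $\eta^{\KU_{\RR}}$ specialize literally to Joachim's model of $\KU$. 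The main potential obstacle is the bookkeeping involved in passing between the FSP formalism and the enriched-functor formalism on $\II_{C_2}$, but the analogous bookkeeping has already been performed in Proposition \ref{realspinbordism} for $\MSpin^c_{\RR}$, and the same techniques apply directly here.
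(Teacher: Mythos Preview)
Your proposal is correct and takes essentially the same approach as the paper: reduce everything to Joachim's nonequivariant construction and then observe that $\mu^{\KU_{\RR}}$ and $\eta^{\KU_{\RR}}$ are $C_2$-equivariant, which was verified piece by piece as each map was defined. The paper's proof is in fact even terser than yours, simply citing that the equivariance checks were already carried out inline.
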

\begin{proof} Nonequivariantly, all of our constructions agree with those of \cite{Joachim}, so the only additional content here is the fact that $\mu^{\KU_{\RR}}_{V,W}$ and $\eta^{\KU_{\RR}}_V$ are $C_2$-equivariant, for $V,W \in \II_{C_2}$. This was shown above as each map was constructed. 
\end{proof}

\begin{proposition}\label{KRisKR}
The $C_2$-spectrum $\KU_{\RR}$ represents Atiyah's Real K-theory. 
\end{proposition}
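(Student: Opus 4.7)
My plan is to identify the $C_2$-fixed point spectrum $\KU_{\RR}^{C_2}$ with $\KO$ and, using this together with the already-evident identification of the underlying spectrum, conclude that $\KU_{\RR}$ represents Atiyah's Real K-theory. On non-equivariant spaces $X$ (viewed with trivial $C_2$-action), Proposition \ref{fixed.cohomology} identifies the cohomology theory computed by $\KU_{\RR}$ with that represented by $\KU_{\RR}^{C_2}$, so it suffices to show $\KU_{\RR}^{C_2} \simeq \KO$; coupled with the fact that the underlying spectrum with its $C_2$-action agrees (by construction via \cite{Joachim}) with $\KU$ with complex conjugation, this pins down $\KU_{\RR}$ as $\KR$.

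First I would unwind the definition,
\[
\KU_{\RR}^{C_2}(n) = (\Omega^{n\sigma}\KU_{\RR}(n\rho))^{C_2} = C^*_{\gr}\bigl(\s,\; \CCl(n\rho)\otimes \KK_{n\rho}\bigr)^{C_2}_{\text{based at }n\sigma\text{-loops}},
\]
where the fixed points are taken with respect to the Real structure inherited from $\CCl(n\rho)$ (which, as a Real algebra, is isomorphic to Kasparov's $C_{n,n}$ by Remark \ref{clifford.signs}) and from the conjugation Real structure on $\KK_{n\rho} = \KK(L^2(\RR^{n,n}))$. A Real graded $*$-homomorphism $\s \to \CCl(n\rho)\otimes\KK_{n\rho}$ is exactly a graded $*$-homomorphism between the underlying real $C^*$-algebras.

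Next I would use Proposition \ref{real.tensor.C} to identify $L^2(\RR^{n,n})^{C_2}$ with a real Hilbert space, so that the Real compact operators $\KK_{n\rho}$ are Morita equivalent, as a Real $C^*$-algebra, to $\RR$. The Real $C^*$-algebra $\CCl_{n,n} \cong C_{n,n}$ is in turn Morita equivalent to $\RR$ as a Real graded $C^*$-algebra by the standard Clifford structure theorem. Thus the fixed-point space becomes the space of graded $*$-homomorphisms $\s \to \mathcal{K}_{\RR}$ into the real compact operators on a separable real Hilbert space; this is precisely the real analogue of Joachim's construction of $\KU$, which recovers $\KO$. Running this identification at every level $n$ gives the desired equivalence $\KU_{\RR}^{C_2} \simeq \KO$ of orthogonal spectra, matching Atiyah's original identification $\KR^{C_2} \simeq \KO$.

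The main obstacle will be step two: carefully tracking the Real structures through the Morita equivalence between $\CCl(n\rho)\otimes\KK_{n\rho}$ and $\RR$. The subtlety is that the $C_2$-action on $\RR^{n,n}$ mixes the trivial and sign representations, so the Real structure on $L^2(\RR^{n,n})$ is not just pointwise complex conjugation of $\CC$-valued functions but is twisted by the sign action on the domain. Verifying that this Real structure on $L^2$, together with the Real structure on $\CCl_{n,n}$ coming from $v_i \mapsto v_i$, $w_j \mapsto -w_j$, combines to give precisely the standard real $C^*$-algebra underlying $C_{n,n} \otimes \mathcal{K}_{\RR}$, is the computational heart of the argument; once established, the remaining identification with $\KO$ is the real-$C^*$-algebraic Bott picture already embedded in Joachim's formalism.
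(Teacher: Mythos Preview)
Your overall strategy matches the paper's: since the underlying spectrum is $\KU$ by construction, it suffices to show $\KU_{\RR}^{C_2}\simeq\KO$, and by Proposition~\ref{fixed.cohomology} this reduces to checking the represented cohomology theory on trivial-action spaces. The divergence is in how you propose to carry out that check, and there are two related gaps.

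First, the $\Omega^{n\sigma}$ in the definition of $\KU_{\RR}^{C_2}(n)$ is effectively dropped after your displayed equation. You identify Real $*$-homomorphisms $\s\to\CCl(n\rho)\otimes\KK_{n\rho}$ with real $*$-homomorphisms, which computes $\KU_{\RR}(n\rho)^{C_2}$, not $(\Omega^{n\sigma}\KU_{\RR}(n\rho))^{C_2}$. The loop factor contributes a $C_0(\RR^{0,n})$ tensor factor (via the adjunction the paper cites as \cite[Lemma~5.8]{Joachim}), and its real form is not $C_0(\RR^n;\RR)$ but the twisted real form $\{f\colon\RR^n\to\CC \mid f(-x)=\overline{f(x)}\}$; this does not disappear under any naive Morita argument, and your ``obstacle'' paragraph addresses only the Clifford side of the bookkeeping, not this suspension side.

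Second, and more fundamentally, Morita equivalence of $C^*$-algebras does not produce homeomorphisms (or even weak equivalences) of the spaces $C^*_{\gr}(\s,-)$. It is true that the real form $\Cl_{n,n}=(\CCl_{n,n})^{C_2}$ is graded $*$-isomorphic to a real matrix algebra and can be absorbed into $\KK$; but once you reinstate the $C_0(\RR^{0,n})$ factor coming from $\Omega^{n\sigma}$, what you need is an equivalence $C_0(\RR^{0,n})\otimes\CCl_{0,n}\sim\CC$, and that is Kasparov's Bott element --- a KK-equivalence, not a $*$-isomorphism. The paper therefore does not try to match spaces levelwise. It computes $[S^{p,q}\wedge X,\KU_{\RR}(\RR^{p+n,q})]_{C_2}$, uses the real version of Trout's theorem to identify $\pi_0\,C^*_{\gr}(\s^{C_2},A\otimes\KK)$ with $\mathrm{KKO}(A)$, and then applies Kasparov's Real Bott periodicity $\mathrm{KKR}(A)\cong\mathrm{KKR}(C_0(S^V)\otimes\CCl(V)\otimes A)$ at the level of KK-groups to strip the suspension and Clifford factors, landing in $\KO^*(X)$. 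Your Morita heuristic is the shadow of exactly this Bott periodicity, but it only becomes a proof after passing to homotopy classes.
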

\begin{proof} Since we know that the underlying spectrum of $\KU_{\RR}$ is $\KU$ it is sufficient to show that the $C_2$-fixed points of $\KU_{\RR}$ is $\KO$. Let $X \in \T \subset \T^{C_2}$, and set $S^{p,q} = S^{\RR^{p,q}}$. By Lemma 5.8 of \cite{Joachim}, there is a homeomorphism of spaces,

$$
\begin{aligned}
\T_{C_2}(S^{p,q} \wedge X, \KU_{\RR}(\RR^{p+n,q})) &=  \T_{C_2}(S^{p,q} \wedge X, C^*_{\gr}(\s, \CCl_{p+n,q} \otimes \KK_{\RR^{p+n,q}})) \\
&\cong C^*_{\gr}(\s, C_0(S^{p,q} \wedge X) \otimes \CCl_{p+n,q} \otimes \KK_{\RR^{p+n,q}}) \\
&\cong C^*_{\gr}(\s, C_0(X) \otimes C_0(S^{p,q}) \otimes \CCl_{p,q} \otimes \CCl_{n} \otimes \KK_{\RR^{p+n,q}}),
\end{aligned}
$$
which is also clearly $C_2$-equivariant. Recall that the category of Real $C^*$-algebras is equivalent to the category of real $C^*$-algebras via fixed points in one direction and complexification in the other \cite{Kasparov80}. Thus, taking fixed points yields
$$
\begin{aligned}
\T^{C_2}(S^{p,q} \wedge X, \KU_{\RR}(\RR^{p+n,q})) &\cong C^*_{\gr}(\s, C_0(X) \otimes C_0(S^{p,q}) \otimes \CCl_{p,q} \otimes \CCl_{n} \otimes \KK_{\RR^{p+n,q}})^{C_2} \\
&\cong C^*_{\gr}(\s^{C_2}, C_0(X)^{C_2} \otimes C_0(S^{p,q})^{C_2} \otimes (\CCl_{p,q})^{C_2} \otimes \Cl_{n} \otimes (\KK_{\RR^{p+n,q}})^{C_2}).
\end{aligned}
$$
Let KKO denote Kasparov's real KK-functor and let KKR denote Kasparov's ``Real" KK-functor \cite{KasparovKK}.
By the real version of the main theorem in \cite{Trout}, we have 
$$
\pi_0 C^*_{\gr}(\s^{C_2}, A \otimes (\KK_{\RR^{p+n,q}})^{C_2}) \cong \text{KKO}(A),
$$

so taking homotopy classes, we get
$$
\begin{aligned}
[S^{p,q} \wedge X, \KU_{\RR}(\RR^{p+n,q})]_{C_2} &\cong \text{KKO}(C_0(X)^{C_2} \otimes C_0(S^{p,q})^{C_2} \otimes (\CCl_{p,q})^{C_2} \otimes \Cl_{n}) \\
&\cong \text{KKR}(C_0(X) \otimes C_0(S^{p,q}) \otimes \CCl_{p,q} \otimes \CCl_{n}), 
\end{aligned}
$$
using the equivalence of real $C^*$-algebras and Real $C^*$-algebras. By Kasparaov's Real Bott periodicity \cite{KasparovKK},
$$
\text{KKR}(A) \cong \text{KKR}(C_0(S^{V}) \otimes \CCl(V) \otimes A ),
$$
we have
$$
\begin{aligned}
\text{KKR}(C_0(X) \otimes C_0(S^{p,q}) \otimes \CCl_{p,q} \otimes \CCl_{n}) &\cong \text{KKR}(C_0(X) \otimes \CCl_n) \\
&\cong \text{KKO}(C_0(X)^{C_2} \otimes \Cl_n). 
\end{aligned}
$$
Now, we use the fact that the constructions in \cite{Joachim}, when applied to real $C^*$-algebras, represent KO-theory, i.e. the spaces $\KO(n) := C^*_{\gr}(\s^{C_2}, \Cl_n \otimes (\KK_{\RR^n})^{C_2})$ form a spectrum (in a way analogous to how we defined $\KU_{\RR}$) representing KO. Since $C_2$ acts trivially on $X$, $C_0(X)^{C_2}$ is the algebra of real-valued continuous functions on $X$, so
$$
\begin{aligned}
\text{KKO}(C_0(X)^{C_2} \otimes \Cl_n) &\cong [X, C^*_{\gr}(\s^{C_2}, \Cl_n \otimes (\KK_{\RR^n})^{C_2})]\\
&\cong [X,\KO(n)]. 
\end{aligned}
$$

By putting the above isomorphisms together and taking a colimit, we find that $$\KU_{\RR}^{*}(X) \cong \KO^*(X),$$ for all $X$ with trivial $C_2$-action. By Proposition \ref{fixed.cohomology}, this implies $(\KU_{\RR})^{C_2} \simeq \KO$, which completes the proof. 
\end{proof}

We proceed to construct the Real spin orientation of $\KU_{\RR}$. In the following definition, we use the identification in Remark \ref{remark.iso} to view $\eta^{\KU_{\RR}}_V$ as a map $S^V \to C^*_{\gr}(\s,\KK(\CCl(V) \otimes L^2(V))$.

\begin{definition}\label{alpha.tilde}
Define $\tilde{\alpha}_V \colon \U^{\ev}_V \times V \to C^*_{\gr}(\s,\KK(\CCl(V) \otimes L^2(V))$ by $$\tilde{\alpha}_V(A,v) = (-1)^{|A|}A\eta^{\KU_{\RR}}_V(v)A^{-1}.$$  
\end{definition}

\begin{proposition}
The map $\tilde{\alpha}_V$ in Definition \ref{alpha.tilde} induces a $C_2$-equivariant map
$$
\alpha_V \colon \MSpin^c_{\RR}(V) \to \KU_{\RR}(V)
$$
\end{proposition}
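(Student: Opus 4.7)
The plan is to carry out three checks on $\tilde{\alpha}_V$: that it descends to the associated bundle $\U^{\ev}_V \times_{\Spin^c(V)} V$, that it extends continuously to the Thom space by sending the point at infinity to $0$, and that it is $C_2$-equivariant. Since the basepoint of $\KU_{\RR}(V) = C^*_{\gr}(\s, \CCl(V) \otimes \KK_V)$ is the zero $*$-homomorphism, which is fixed by the $C_2$-action, the three checks together give $\alpha_V$.

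For the factorization through $\U^{\ev}_V \times_{\Spin^c(V)} V$, I must verify that
\[
\tilde{\alpha}_V(A \cdot i_V(g)^{-1}, \rho_V(g)v) = \tilde{\alpha}_V(A, v)
\]
for $g \in \Spin^c(V)$, $A \in \U^{\ev}_V$, and $v \in V$. Since $g \in \Spin^c(V)$ is even, $|A \cdot i_V(g)^{-1}| = |A|$, so this reduces to the identity
\[
i_V(g)^{-1} \eta^{\KU_{\RR}}_V(\rho_V(g)v) i_V(g) = \eta^{\KU_{\RR}}_V(v).
\]
Equivalently, I need $i_V(g)\, \eta^{\KU_{\RR}}_V(v)\, i_V(g)^{-1} = \eta^{\KU_{\RR}}_V(\rho_V(g)v)$. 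Recall $\eta^{\KU_{\RR}}_V(v) = \beta_V(v) \star p_V$. Conjugation by $i_V(g)$ fixes $p_V$ because the Gaussian $e^{-|\cdot|^2}$ is invariant under the orthogonal action $\rho_V(g)$ on $L^2(V)$, so $P_{e^{-|\cdot|^2}}$ is fixed under conjugation by $i_V(g)$. On the Clifford factor, conjugation by $i_V(g)$ acts on $\CCl(V)$ as $x \mapsto g x g^{-1} = \rho_V(g)(x)$ for $x \in V$ (using $|g|=0$), which sends the generators $1/(1+|v|^2)$ and $v/(1+|v|^2)$ of $\beta_V(v)$ to those of $\beta_V(\rho_V(g)v)$, since $\rho_V(g)$ is an isometry. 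This gives the desired identity.

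For the extension to the Thom space, I use that $|\beta_V(v)(a)|, |\beta_V(v)(b)| \to 0$ as $|v|\to\infty$, so $\beta_V(v)\to 0$ in the compact-open topology of $C^*_{\gr}(\s, \CCl(V))$; consequently $\eta^{\KU_{\RR}}_V(v) \to 0$, and then $\tilde{\alpha}_V(A,v) = (-1)^{|A|} A \eta^{\KU_{\RR}}_V(v) A^{-1} \to 0$ uniformly in compact sets of $A$. Hence the descended map extends to a pointed map $\alpha_V \colon \MSpin^c_{\RR}(V) \to \KU_{\RR}(V)$ sending $\infty$ to $0$.

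Finally, for $C_2$-equivariance, the action on $\U^{\ev}_V$ is even, so $|\overline{A}| = |A|$; combined with the fact that $\eta^{\KU_{\RR}}_V$ was constructed from Real maps $\beta_V$ and $p_V$ and the Real $\star$-operation (Definition \ref{def.star}), and that conjugation in $\KK(\CCl(V) \otimes L^2(V))$ is a Real operation, one directly computes
\[
\tilde{\alpha}_V(\overline{A}, \overline{v}) = (-1)^{|A|} \overline{A}\, \eta^{\KU_{\RR}}_V(\overline{v})\, \overline{A}^{-1} = \overline{(-1)^{|A|} A\, \eta^{\KU_{\RR}}_V(v)\, A^{-1}} = \overline{\tilde{\alpha}_V(A,v)}.
\]
The main subtlety to be careful with is the first step, keeping track of the sign conventions, the right-action of $\Spin^c(V)$ on $\U^{\ev}_V$ via $i_V$, and the compatibility of $\rho_V$ with conjugation in $\CCl(V)$; once that identity is established, the remaining two steps are routine.
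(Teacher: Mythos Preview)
Your proposal is correct and follows the same three-step structure as the paper (descent to the associated bundle, extension over $\infty$, $C_2$-equivariance). The only difference is in the first step: you argue conceptually that conjugation by $i_V(g)$ respects the tensor decomposition $\CCl(V)\otimes\KK_V$, acting as $\mathrm{Ad}_g$ on the Clifford factor and fixing $p_V$ via rotation-invariance of the Gaussian, whereas the paper verifies the identity $i_V(g^{-1})\eta^{\KU_{\RR}}_V(\rho_V(g)v)i_V(g)=\eta^{\KU_{\RR}}_V(v)$ by an explicit computation on the generators $a,b\in\s$ applied to elementary tensors $w\otimes f$. Your argument is shorter and more structural; the paper's is longer but makes the compatibility of $i_V$, $\rho_V$, and the $\star$-product completely explicit.
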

\begin{proof}
Let $(A,v) \in \U^{\ev}_V \times V, g \in \Spin^c(V)$, and $s \in \s$. Then 
$$
\begin{aligned}
\tilde{\alpha}_V(Ag^{-1},gv)(s) &=  (-1)^{|A \cdot g^{-1}|}(A\cdot g^{-1})\eta^{\KU_{\RR}}_V(g\cdot v)(s)(A \cdot g^{-1})^{-1} \\
&= (-1)^{|A|}Ai_V(g^{-1})\eta^{\KU_{\RR}}_V(\rho_V(g)v)(s)i_V(g)A^{-1} \\
&= (-1)^{|A|}Ai_V(g^{-1})\eta^{\KU_{\RR}}_V((-1)^{|g|}gvg^{-1})(s)i_V(g)A^{-1}.
\end{aligned}
$$
Now let $w \otimes f \in \CCl(V) \otimes L^2(V)$. For $s = a \in \s$, 
$$
\begin{aligned}
i_V(g^{-1}) \eta^{\KU_{\RR}}_V(gvg^{-1})(a)i_V(g)(w \otimes f) &= i_V(g^{-1}) \eta^{\KU_{\RR}}_V(gvg^{-1})(a)(gw \otimes f\circ \rho_V(g)^{-1}) \\ 
&= i_V(g^{-1}) \bigg( \dfrac{1}{1+|gvg^{-1}|^2} gw \bigg)\otimes P_{{e^{-|\cdot|^2}} }( f\circ \rho_V(g)^{-1}) \\ 
&= g^{-1} \bigg( \dfrac{1}{1+|v|^2} gw \bigg)\otimes P_{{e^{-|\cdot|^2}} }( f) \\ 
&= \bigg( \dfrac{1}{1+|v|^2} w \bigg)\otimes P_{{e^{-|\cdot|^2}} }( f) = \eta^{\KU_{\RR}}_V(v)(a)(w \otimes f),
\end{aligned}
$$
where the third equality follows from isometry invariance of both the integral in the $L^2$-space projection and of the Gaussian. So $\tilde{\alpha}_V(Ag^{-1},gv)(a) = \tilde{\alpha}_V(A,v)(a)$. For $s = b \in \s$, 
$$
\begin{aligned}
i_V(g^{-1}) \eta^{\KU_{\RR}}_V(gvg^{-1})(b)i_V(g)(w \otimes f) &= i_V(g^{-1}) \bigg( \dfrac{gvg^{-1}}{1+|gvg^{-1}|^2} gw \bigg)\otimes P_{{e^{-|\cdot|^2}} }( f\circ \rho_V(g)^{-1}) \\ 
&= g^{-1} \bigg( \dfrac{gvg^{-1}}{1+|v|^2} gw \bigg)\otimes P_{{e^{-|\cdot|^2}} }( f) \\ 
&= \bigg( \dfrac{v}{1+|v|^2} w \bigg)\otimes P_{{e^{-|\cdot|^2}} }( f) = \eta^{\KU_{\RR}}_V(v)(b)(w \otimes f). 
\end{aligned}
$$
Thus, $\tilde{\alpha}_V(Ag^{-1},gv) = \tilde{\alpha}_V(A,v)$, which means $\alpha_V$ is well-defined on $\U^{\ev}_V \times_{\Spin^c(V)} V$. We can extend $\alpha_V$ to $\infty$ by 0, since $\eta^{\KU_{\RR}}_V(\infty) = 0$; thus, $\alpha_V \colon \MSpin^c_{\RR}(V) \to \KU_{\RR}(V)$ is well-defined. For $C_2$-equivariance, we note that $\eta^{\KU_{\RR}}_V$ is $C_2$-equivariant, so we see that 
$$
\overline{\alpha([A,v])} = (-1)^{|A|}\overline{A}\overline{\eta^{\KU_{\RR}}_V(v)}\overline{A}^{-1} = (-1)^{|A|}\overline{A}\eta^{\KU_{\RR}}_V(\overline{v})\overline{A}^{-1} = \alpha_V(\overline{[A,v]}),
$$
for $[A,v] \in \MSpin^c_{\RR}(V)$. 
\end{proof}

\begin{proposition}\label{Real.ABS}
    The maps $\alpha_V$ define a map of commutative orthogonal $C_2$-ring spectra, 
    $$\alpha \colon \MSpin^c_{\RR} \to \KU_{\RR}.$$
\end{proposition}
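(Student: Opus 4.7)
The approach is to verify that the level-wise maps $\alpha_V$ satisfy the three conditions making them into a lax symmetric monoidal enriched natural transformation: naturality with respect to $\II_{C_2}$-morphisms, compatibility with the units, and compatibility with the multiplications. Since each $\alpha_V$ is already $C_2$-equivariant, every diagram to be checked lives in $\T_{C_2}$; because every constituent map on both sides has been verified to be equivariant during construction, commutativity may be checked at the underlying level in $\T$. This reduces the problem to the non-equivariant setting of Joachim \cite{Joachim}: the underlying map $\alpha^e \colon \MSpin^c \to \KU$ is precisely the orientation Joachim constructs, which he shows is a map of commutative orthogonal ring spectra. So the proof amounts to re-running Joachim's three checks in our Real refinement, observing at each step that the Real structures are preserved.

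For naturality, I would take $(\iota, x) \in \II_{C_2}(V, W)$ with $n = \dim W - \dim V$ and $[A, v] \in \MSpin^c_{\RR}(V)$, and chase both routes around
$$
\begin{tikzcd}
\II_{C_2}(V,W) \wedge \MSpin^c_{\RR}(V) \arrow[r, "\sigma_{V,W}"] \arrow[d, "\id \wedge \alpha_V"'] & \MSpin^c_{\RR}(W) \arrow[d, "\alpha_W"] \\
\II_{C_2}(V,W) \wedge \KU_{\RR}(V) \arrow[r, "\sigma_{V,W}"] & \KU_{\RR}(W).
\end{tikzcd}
$$
Unwinding $\sigma_{V,W}$ on both sides, each composite produces the operator $(-1)^{|A|}\,\U_\iota(T_{\RR^n, V}(\id, A))\, \eta^{\KU_{\RR}}_W(\iota(x+v))\, \U_\iota(T_{\RR^n, V}(\id, A))^{-1}$, using that $\U_\iota$ is an algebra map and that $\eta^{\KU_{\RR}}$ is natural along $\iota$. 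For the unit, the spaces $\MSpin^c_{\RR}(0)$ and $\KU_{\RR}(0)$ both reduce to $S^0$ and $\alpha_0$ is the identity, so it automatically matches $\eta^{\MSpin^c_{\RR}}_0$ with $\eta^{\KU_{\RR}}_0$.

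The substantive check is multiplicativity. For $([A,v],[B,w]) \in \MSpin^c_{\RR}(V) \wedge \MSpin^c_{\RR}(W)$, unwinding both sides of $\alpha_{V\oplus W} \circ \mu^{\MSpin^c_{\RR}}_{V,W} = \mu^{\KU_{\RR}}_{V,W} \circ (\alpha_V \wedge \alpha_W)$ reduces the identity to
$$
(-1)^{|A|+|B|}\, T_{V,W}(A,B)\, \eta^{\KU_{\RR}}_{V \oplus W}(v + w)\, T_{V,W}(A,B)^{-1} \;=\; \alpha_V([A,v]) \star \alpha_W([B,w]).
$$
This splits into two sub-claims. The first is multiplicativity of the unit,
$$
\eta^{\KU_{\RR}}_{V \oplus W}(v + w) \;=\; \eta^{\KU_{\RR}}_V(v) \star \eta^{\KU_{\RR}}_W(w),
$$
which I would verify by evaluating on the generators $a, b \in \s$ using the explicit formulas for $\Delta$, the $\fc$ maps, and the Gaussian projection $P_{e^{-|\cdot|^2}}$. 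The second is that conjugation by $T_{V,W}(A,B)$ distributes through $\star$, which follows from $T_{V,W}$ being an algebra map into $\U_{V\oplus W}$ together with the tensor decomposition in \eqref{tensor.clifford}.

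The main obstacle is precisely this multiplicativity verification, both because $\star$ is built from a nontrivial comultiplication on $\s$ and because one must keep track of the grading signs. However, the underlying calculation is already in \cite{Joachim}, and the only new input in our setting is that every intermediate map respects the Real structures built from the constructions in Section \ref{sec.Real} — a fact that follows from the same equivariance arguments used when defining $\mu^{\KU_{\RR}}$ and $\eta^{\KU_{\RR}}$ above. Once these equivariance observations are in hand, the proof is essentially bookkeeping.
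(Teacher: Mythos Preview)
Your proposal is correct and follows exactly the paper's approach: reduce to the underlying non-equivariant situation (since all constituent maps have already been shown to be $C_2$-equivariant) and appeal to Joachim \cite{Joachim} for naturality and monoidality. The paper's proof is in fact much terser than yours, simply citing Joachim without spelling out the three checks you outline.
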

\begin{proof}
   By the previous proposition we showed that each component, $\alpha_V$, is equivariant. It only remains to check naturality and monoidality, but these follow from Joachim \cite{Joachim}, since our constructions are identical to his on underlying spaces.
\end{proof}

It is shown in \cite{Joachim} that on underlying spectra, $\alpha$ refines the Atiyah--Bott--Shapiro spin${}^c$ orientation of KU \cite{ABS}.

\begin{corollary}
    Taking $C_2$-fixed points, the map $\alpha$ above induces a map of $E_{\infty}$-ring spectra, $$\MSpin \to \KO,$$
    refining the Atiyah--Bott--Shapiro spin orientation of KO \cite{ABS}.
\end{corollary}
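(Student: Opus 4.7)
The plan is to simply compose three maps that have already been constructed in the preceding sections. Define the $E_\infty$-ring map $\MSpin \to \KO$ as the composite
$$
\MSpin \xrightarrow{\tilde{u}} (\MSpin^c_{\RR})^{C_2} \xrightarrow{\alpha^{C_2}} (\KU_{\RR})^{C_2} \xrightarrow{\sim} \KO,
$$
where $\tilde{u}$ is the $E_\infty$-map of Proposition \ref{spin.fixedpoints}, $\alpha^{C_2}$ is obtained by applying the lax symmetric monoidal fixed-points functor $(\,\cdot\,)^{C_2}$ to the $C_2$-$E_\infty$-map $\alpha$ of Proposition \ref{Real.ABS}, and the final equivalence comes from Proposition \ref{KRisKR}. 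Since each of the three constituents is a map (or equivalence) of commutative monoids in $\Sp$, so is the composite.

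To show that this composite refines the classical Atiyah--Bott--Shapiro spin orientation, I would compare it against the corresponding spin${}^c$ orientation by assembling the diagram
\begin{equation*}
\begin{tikzcd}[column sep = small]
\MSpin \ar[r, "\tilde{u}"] \ar[d, "u"'] & (\MSpin^c_{\RR})^{C_2} \ar[r, "\alpha^{C_2}"] \ar[d, "\mathrm{res}"'] & (\KU_{\RR})^{C_2} \ar[d, "\mathrm{res}"'] \ar[r, "\sim"] & \KO \ar[d, "c"] \\
\MSpin^c \ar[r, equal] & (\MSpin^c_{\RR})^e \ar[r, "\alpha^e"'] & (\KU_{\RR})^e \ar[r, equal] & \KU,
\end{tikzcd}
\end{equation*}
where the middle vertical maps are the restriction-to-underlying maps from Section \ref{sec.prelim} and $c$ denotes complexification. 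The left square commutes by the very construction of $\tilde{u}(n)$, which factors the usual inclusion $\Spin(n) \hookrightarrow \Spin^c(n)$ through fixed points. The middle square commutes by naturality of the restriction map associated to $\alpha$. The right square commutes because, as is implicit in the proof of Proposition \ref{KRisKR}, the identification $(\KU_{\RR})^{C_2} \simeq \KO$ (via Kasparov's real KK-theory) is compatible with passage to the underlying spectrum, which is precisely the complexification map $c \colon \KO \to \KU$. By Proposition \ref{underlying.spinc} and Joachim's identification of $\alpha^e$ with the ABS spin${}^c$ orientation, the bottom composite is the ABS spin${}^c$ orientation of $\KU$.

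To conclude, I would invoke the standard characterization that the ABS spin orientation $\MSpin \to \KO$ is the unique ring map whose composition with complexification $c \colon \KO \to \KU$ equals the ABS spin${}^c$ orientation precomposed with $u \colon \MSpin \to \MSpin^c$. Our top composite has this property by commutativity of the diagram, hence agrees with the ABS spin orientation as a ring map, and is promoted here to an $E_\infty$-refinement.

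The main obstacle, in principle, is ensuring that the equivalence $(\KU_{\RR})^{C_2} \simeq \KO$ is compatible with complexification and with the spin${}^c$ structure on the underlying spectrum; however, this is already built into the Kasparov-theoretic proof of Proposition \ref{KRisKR}, so the corollary is essentially a formal consequence of Propositions \ref{spin.fixedpoints}, \ref{Real.ABS}, and \ref{KRisKR}, together with Joachim's identification of $\alpha^e$ with the classical ABS map.
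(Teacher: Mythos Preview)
Your construction of the map as the composite $\MSpin \xrightarrow{\tilde u} (\MSpin^c_{\RR})^{C_2} \xrightarrow{\alpha^{C_2}} (\KU_{\RR})^{C_2} \simeq \KO$ is exactly what the paper does, and your commutative square relating it to the underlying spin${}^c$ orientation is correct and useful.

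The gap is in the last step. You invoke a ``standard characterization'' that the ABS spin orientation is the \emph{unique} ring map $\MSpin \to \KO$ whose complexification equals $\alpha^e \circ u$. This is not standard, and it is not clear it is true: complexification $c\colon \KO \to \KU$ kills the $\eta$--torsion in $\pi_*\KO$, so in principle distinct maps to $\KO$ can have the same complexification. You would need an additional argument to rule this out, and none is given.

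The paper avoids this problem by working one level down, at Thom classes rather than spectrum maps. The identification $(\KU_{\RR})^{C_2}\simeq \KO$ in Proposition~\ref{KRisKR} goes through the equivalence between Real $C^*$-algebras and real $C^*$-algebras, under which taking $C_2$-fixed points is literally the \emph{inverse} of complexification. Hence the $\KO$-Thom class produced by $\alpha_V^{C_2}$ is, by construction, the real form of the $\KU$-Thom class produced by $\alpha_V$. Joachim identifies the latter with the spin${}^c$ ABS class, and for a spin bundle that class is the complexification of the spin ABS class; applying the inverse (fixed points) recovers the spin ABS class on the nose. No uniqueness statement is needed. Your diagram already contains all the ingredients for this argument --- you just need to replace the appeal to uniqueness with the observation that, at the $C^*$-algebra level, fixed points undoes complexification.
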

\begin{proof}
    The map follows from Proposition \ref{Real.ABS}, Proposition \ref{spin.fixedpoints}, and $(\KU_{\RR})^{C_2} \simeq \KO$. To see that it does indeed refine the Atiyah--Bott--Shapiro orientation, recall that the KO-cohomology class associated to a map $\MSpin(n) \to \KU_{\RR}(\RR^n) $ arises by taking fixed points (= ``real points") in the category of Real $C^*$-algebras (see the proof of Proposition \ref{KRisKR}). The result then follows from the proof in \cite{Joachim}, together with the observation that the spin${}^c$ Atiyah--Bott--Shapiro Thom class associated to a spin vector bundle is the complexification of its spin Thom class.
\end{proof}

This completes the proof of Theorem \ref{maintheorem}.

\bibliographystyle{plain} 
\bibliography{main}

\end{document}